\newdimen\plusheight
\def\+{\;\lower\plusheight\hbox{$+$}\;}
\newdimen\minusheight
\def\-{\;\lower\minusheight\hbox{$-$}\;}
\newdimen\cdotsheight
\def\cds{\lower\cdotsheight\hbox{$\cdots$}}
\def\leqalignno#1{\displ@y \tabskip\z@ plus\@ne fil
  \halign to\displaywidth{\hfil$\@lign\displaystyle{##}$\tabskip\z@skip
    &$\@lign\displaystyle{{}##}$\hfil\tabskip\z@ plus\@ne fil
    &\kern-\displaywidth\rlap{$\@lign\hbox{\rm##}$}\tabskip\displaywidth\crcr
    #1\crcr}}
\newcommand{\df}{\dfrac}
\renewcommand{\Re}{\text{Re}}
\renewcommand{\(}{\left\(}
\renewcommand{\)}{\right\)}
\renewcommand{\[}{\left\[}
\renewcommand{\]}{\right\]}
\numberwithin{equation}{section}
\theoremstyle{plain}
\newtheorem{theorem}{Theorem}[section]
\newtheorem{lemma}[theorem]{Lemma}
\newtheorem{corollary}[theorem]{Corollary}
\newtheorem{entry}[theorem]{Entry}
\newcommand{\leqnomode}{\tagsleft@true\let\veqno\@@leqno}
\newcommand{\reqnomode}{\tagsleft@false\let\veqno\@@eqno}
\begin{document}
\title[Cubic and quintic analogues]{Cubic and quintic analogues of\\ Ramanujan's septic theta function identity}
\author{Bruce C.~Berndt and \"{O}rs Reb\'{a}k}
\address{Department of Mathematics, University of Illinois, 1409 West Green Street, Urbana, IL 61801, USA}
\email{berndt@illinois.edu}
\address{Department of Mathematics and Statistics, University of Troms\o{} -- The Arctic University of Norway, 9037 Troms\o{}, Norway}
\email{ors.rebak@uit.no}

\keywords{Ramanujan's theta functions, values of theta functions, cubic theta function, class invariants, Ramanujan's lost notebook}
\subjclass{11F27, 05A30}
\date{\today}

\begin{abstract}
On page~206 in his lost notebook, Ramanujan recorded an incomplete septic theta function identity. Motivated by the completion of this identity by the second author, we offer cubic and quintic analogues. Using the theory generated by these two analogues and Ramanujan's class invariants, we provide many evaluations for Ramanujan's most prominent theta function, $\varphi(q)$ in his notation.
\end{abstract}

\maketitle

\section{Introduction}
In his lost notebook \cite[p.~206]{RamanujanLost}, Ramanujan recorded an incomplete formula, with three missing terms and a misprint, for
$\varphi(e^{-7\pi\sqrt{7}})$,
where, in the notation of Ramanujan,
\begin{equation}\label{def:phi}
\varphi(q):=\sum_{n=-\infty}^{\infty}q^{n^2},\quad |q|<1.
\end{equation}
In their second book on Ramanujan's lost notebook \cite[pp.~180--194]{AndrewsBerndtII}, G.~E.~Andrews and the present first author briefly discussed Ramanujan's deficient entry, but did not supply the missing terms. The second author of the present paper derived the missing terms in \cite{Rebak}, and thereby completed a remarkable entry of Ramanujan from his lost notebook.

The goal of this paper is to establish and prove cubic and quintic analogues of Ramanujan's now completed entry. The foundations for these two theories were set by Ramanujan in his notebooks \cite{RamanujanEarlierI}, \cite{RamanujanEarlierII}. As corollaries, a multitude of explicit values of $\varphi(e^{-\pi\sqrt{n}})$, where $n$ is a positive rational number, are established. Most of these evaluations are new.

The cubic, quintic, and septic theories are evidently special cases of a grand theory that Ramanujan envisioned at the end of Section~12 of Chapter~20 in his second notebook \cite[p.~247]{RamanujanEarlierII}, \cite[p.~400]{BerndtIII}.

In the last section of our paper, we offer several evaluations for Ramanujan's cubic theta function, also known as the Borweins' cubic theta function \cite{BorweinBrothersCubic},
\begin{equation}\label{def:a}
a(q):=\sum_{m,n=-\infty}^{\infty}q^{m^2+mn+n^2},\quad |q|<1.
\end{equation}
As above, the majority of these determinations are new.

In order to state the aforementioned theorem of Ramanujan, we need to offer some notation.
Let
\begin{equation*}
(a;q)_{\infty} := \prod_{k=0}^{\infty}(1-aq^k), \qquad |q|<1.
\end{equation*}
After Ramanujan \cite[p.~37]{BerndtIII}, set
\begin{equation}\label{def:chi}
\chi(q):=(-q;q^2)_{\infty}.
\end{equation}
When $q=e^{-\pi\sqrt{n}}$, for a positive rational $n$, the class invariant $G_n$ is defined by \cite[pp.~21, 183]{BerndtV}
\begin{equation}\label{def:G}
G_n:=2^{-1/4}q^{-1/24}\chi(q).
\end{equation}
Ramanujan discussed properties of class invariants in his paper \cite{RamanujanModularPi}, \cite[pp.~23--39]{RamanujanCollected}. In particular, we  need the property
\begin{equation}\label{eq:G}
G_n=G_{1/n}.
\end{equation}

Ramanujan's general theta function $f(a,b)$ is defined by \cite[p.~197]{RamanujanEarlierII}, \cite[p.~34]{BerndtIII}
\begin{equation}\label{eq:Jacobi-triple-product}
f(a,b):=\sum_{n=-\infty}^{\infty}a^{n(n+1)/2}b^{n(n-1)/2}= (-a;ab)_{\infty}(-b;ab)_{\infty}(ab;ab)_{\infty}, \qquad |ab| <1,
\end{equation}
where the latter representation is the Jacobi triple product identity \cite[pp.~176--183]{Jacobi}, \cite[p.~197]{RamanujanEarlierII}, \cite[p.~35, Entry~19]{BerndtIII}. Comparing \eqref{eq:Jacobi-triple-product} with \eqref{def:phi}, note that $f(q,q)=\varphi(q)$.

We next offer the classical theta transformation formula \cite[p.~199]{RamanujanEarlierII}, \cite[p.~43, Entry~27(i)]{BerndtIII}. If $\Re(\alpha^2), \Re(\beta^2) > 0$ and $\alpha\beta=\pi$, then
\begin{equation*}
\sqrt{\alpha}\varphi(e^{-\alpha^2})=\sqrt{\beta}\varphi(e^{-\beta^2}).
\end{equation*}
In the special case, if $n$ is a positive rational number and $\alpha^2 = \pi/\sqrt{n}$, then
\begin{equation}\label{eq:transform}
\varphi(e^{-\pi/\sqrt{n}})=n^{1/4}\varphi(e^{-\pi\sqrt{n}}).
\end{equation}

We are now ready to offer Ramanujan's incomplete septic identity from his lost notebook \cite[p.~206]{RamanujanLost} as he wrote it (but with a misprint corrected).

\begin{entry}[p.~206]\label{entry:Ramanujan}
\leqnomode
Let
\begin{equation}\label{7:1+u+v+w}
\frac{\varphi(q^{1/7})}{\varphi(q^7)} = 1 + u + v + w.\tag{i}
\end{equation}
Then, for $u$, $v$, and $w$ given below,
\begin{equation}\label{def:7:p}
p := uvw = \frac{8q^2 (-q; q^2)_\infty}{(-q^7; q^{14})^7_\infty}\tag{ii}
\end{equation}
and
\begin{align}\label{7:phi-to-p}
\frac{\varphi^8(q)}{\varphi^8(q^7)} - (2 + 5p)\frac{\varphi^4(q)}{\varphi^4(q^7)} + (1 - p)^3 = 0.\tag{iii}
\end{align}
Furthermore,
\begin{equation}\label{7:u,v,w}
u = \bigg(\frac{\alpha^2 p}{\beta}\bigg)^{1/7}, \quad
v = \bigg(\frac{\beta^2 p}{\gamma}\bigg)^{1/7}, \text{\quad and \quad}
w = \bigg(\frac{\gamma^2 p}{\alpha}\bigg)^{1/7},\tag{iv}
\end{equation}
where $\alpha, \beta,$ and $\gamma$ are the roots of the cubic equation
\begin{equation}\label{7:r}
r(\xi) := \xi^3 + 2\xi^2\bigg(1 + 3p - \frac{\varphi^4(q)}{\varphi^4(q^7)}\bigg) + \xi p^2(p+4) - p^4 = 0.\tag{v}
\end{equation}
For example,
\begin{equation}\label{enigmatic}
\varphi(e^{-7\pi\sqrt{7}}) = 7^{-3/4}\varphi(e^{-\pi\sqrt{7}})\Big\{1 + (\quad)^{2/7} + (\quad)^{2/7} + (\quad)^{2/7}\Big\}.\tag{vi}
\end{equation}
\reqnomode
\end{entry}
Note that $u, v,$ and $w$ depend on the order of the roots $\alpha, \beta,$ and $\gamma$. Part~\eqref{7:1+u+v+w} is recorded in Ramanujan's second notebook \cite[p.~239]{RamanujanEarlierII}, \cite[p.~303]{BerndtIII} as well, but in the form
\begin{equation*}
\varphi(q^{1/7}) - \varphi(q^7) = 2q^{1/7}f(q^5, q^9) + 2q^{4/7}f(q^3, q^{11}) + 2q^{9/7}f(q, q^{13}),
\end{equation*}
from which we can deduce the definitions \cite{Son}, \cite[p.~181]{AndrewsBerndtII}, \cite[p.~198]{Son2}, \cite{Rebak}
\begin{equation}\label{def:u,v,w}
u:=2q^{1/7}\frac{f(q^5, q^9)}{\varphi(q^7)}, \qquad
v:=2q^{4/7}\frac{f(q^3, q^{11})}{\varphi(q^7)}, \qquad
w:=2q^{9/7}\frac{f(q, q^{13})}{\varphi(q^7)}.
\end{equation}

Parts \eqref{7:1+u+v+w}--\eqref{7:r} were proved by Seung Hwan Son \cite{Son}, \cite[pp.~198--200]{Son2}, \cite[pp.~180--194]{AndrewsBerndtII}.
The second author completed \eqref{enigmatic}, and thereby Ramanujan's Entry~\ref{entry:Ramanujan}, by establishing the following representation \cite[Theorem~4.1]{Rebak}.

\begin{theorem}\label{thm:enigmatic} We have
\begin{equation*}
\varphi(e^{-7\pi\sqrt{7}}) = 7^{-3/4}\varphi(e^{-\pi\sqrt{7}})\Bigg\{1 + \bigg(\frac{\cos\frac{\pi}{7}}{2\cos^2\frac{2\pi}{7}}\bigg)^{2/7} + \bigg(\frac{\cos\frac{2\pi}{7}}{2\cos^2\frac{3\pi}{7}}\bigg)^{2/7} + \bigg(\frac{\cos\frac{3\pi}{7}}{2\cos^2\frac{\pi}{7}}\bigg)^{2/7}\Bigg\}.
\end{equation*}
\end{theorem}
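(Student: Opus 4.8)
The plan is to specialize Entry~\ref{entry:Ramanujan} at $q = e^{-\pi/\sqrt7}$, so that $q^7 = e^{-\pi\sqrt7}$ and $q^{1/7} = e^{-\pi/\sqrt{343}}$. First I would apply the transformation formula \eqref{eq:transform} twice: with $n = 343$ it gives $\varphi(q^{1/7}) = 7^{3/4}\varphi(e^{-7\pi\sqrt7})$, and with $n = 7$ it gives $\varphi(q)/\varphi(q^7) = 7^{1/4}$, hence $X := \varphi^4(q)/\varphi^4(q^7) = 7$. Substituting into part~\eqref{7:1+u+v+w} turns the identity into
\begin{equation*}
1 + u + v + w = \frac{\varphi(q^{1/7})}{\varphi(q^7)} = 7^{3/4}\frac{\varphi(e^{-7\pi\sqrt7})}{\varphi(e^{-\pi\sqrt7})},
\end{equation*}
so that $\varphi(e^{-7\pi\sqrt7}) = 7^{-3/4}\varphi(e^{-\pi\sqrt7})(1 + u + v + w)$. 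This already matches the shape of the claimed formula, reducing everything to evaluating $u$, $v$, $w$ at this value of $q$.

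Next I would determine $p$. Feeding $X = 7$ into part~\eqref{7:phi-to-p} yields $49 - 7(2 + 5p) + (1-p)^3 = 0$, which factors as $(1-p)\bigl(35 + (1-p)^2\bigr) = 0$; since the second factor is positive, $p = 1$. As a check, one may instead insert $\chi(q) = (-q;q^2)_\infty$ together with \eqref{def:G} and the classical value $G_7 = 2^{1/4}$ into \eqref{def:7:p}, using $G_{1/7}=G_7$ from \eqref{eq:G}, to obtain $p = 2^{3/2}G_7^{-6} = 1$. With $p = 1$ and $X = 7$, the cubic \eqref{7:r} collapses to
\begin{equation*}
r(\xi) = \xi^3 - 6\xi^2 + 5\xi - 1.
\end{equation*}
The key claim is that its three roots are $1/(4\cos^2(k\pi/7))$ for $k = 1, 2, 3$. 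I would verify this through Vieta's formulas, using the classical identities $\sum_{k=1}^3\sec^2(k\pi/7) = 24$, $\sum_{k=1}^3 4\cos^2(k\pi/7) = 5$, and $\prod_{k=1}^3\cos(k\pi/7) = \tfrac{1}{8}$, which give sum of roots $6$, sum of pairwise products $5$, and product $1$, respectively.

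With the roots in hand, part~\eqref{7:u,v,w} at $p = 1$ reads $u^7 = \alpha^2/\beta$, and cyclically for $v$ and $w$. Writing $r_k = 1/(4\cos^2(k\pi/7))$ and computing the relevant ratios, one finds that the multiset $\{u^7, v^7, w^7\}$ equals $\{r_2^2/r_1,\ r_3^2/r_2,\ r_1^2/r_3\}$; a short calculation identifies these with the squares of the three bracketed quantities in the theorem, namely $\cos^2(\pi/7)/(4\cos^4(2\pi/7))$ and its cyclic companions. Taking positive real seventh roots and summing then yields the asserted value of $1 + u + v + w$, and hence of $\varphi(e^{-7\pi\sqrt7})$.

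The hard part is fixing the orientation of the roots in \eqref{7:u,v,w}. Since \eqref{7:r} determines $\alpha$, $\beta$, $\gamma$ only as an unordered set, the six orderings fall into two cyclic orbits, producing for $\{u^7, v^7, w^7\}$ either the multiset $\{r_2^2/r_1,\ r_3^2/r_2,\ r_1^2/r_3\}$ or the reversed multiset $\{r_1^2/r_2,\ r_2^2/r_3,\ r_3^2/r_1\}$; these give genuinely different values of $u + v + w$, so one must single out the correct orbit. I would do this by returning to the product definitions \eqref{def:u,v,w}: at $q = e^{-\pi/\sqrt7}$ the products converge rapidly, and a crude estimate of the dominant quotient $u = 2q^{1/7}f(q^5,q^9)/\varphi(q^7)$ shows that the largest of $u, v, w$ is below $1.8$, with seventh power near $r_3^2/r_2 \approx 40$, whereas the reversed orbit would force a value $(r_3^2/r_1)^{1/7} \approx 1.88$. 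This contradiction excludes the reversed orbit and, together with the exact algebraic values of the roots, pins down $u$, $v$, $w$ exactly, completing the proof.
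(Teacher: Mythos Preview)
The paper does not actually prove Theorem~\ref{thm:enigmatic}; it merely quotes the result from \cite[Theorem~4.1]{Rebak} as motivation for the cubic and quintic analogues that form the body of the paper. So there is no in-paper proof to compare against, and your proposal has to be assessed on its own merits.

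Your argument is correct and is exactly the natural route from Entry~\ref{entry:Ramanujan}: specialize at $q=e^{-\pi/\sqrt7}$, use \eqref{eq:transform} to get $\varphi^4(q)/\varphi^4(q^7)=7$, deduce $p=1$ from \eqref{7:phi-to-p} (or from \eqref{def:7:p} with $G_{1/7}=G_7=2^{1/4}$), reduce \eqref{7:r} to $\xi^3-6\xi^2+5\xi-1=0$, and identify the roots as $1/(4\cos^2(k\pi/7))$ via Vieta. All of this checks out, including the crucial observation that the six orderings of $(\alpha,\beta,\gamma)$ collapse into two cyclic orbits yielding genuinely different values of $u+v+w$.

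The only place that needs a little more care is the numerical separation of the two orbits. The two candidate sums $u+v+w$ are approximately $3.301$ and $3.340$, which are close enough that a sloppy estimate of the whole sum would not distinguish them. Your idea of isolating the dominant term $u=2q^{1/7}f(q^5,q^9)/\varphi(q^7)$ is the right fix: at $q=e^{-\pi/\sqrt7}\approx 0.305$ one has $u\approx 2q^{1/7}(1+q^5)\approx 1.692$, while the competing orbit would force the largest term to equal $(r_3^2/r_1)^{1/7}\approx 1.880$. Since truncating $f(q^5,q^9)$ and $\varphi(q^7)$ after the first nontrivial terms already gives $u$ to within $10^{-3}$, the gap of roughly $0.19$ is easily certified rigorously. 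It would strengthen the write-up to state this explicitly rather than leave it at ``a crude estimate''.
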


As in this first septic example in Theorem~\ref{thm:enigmatic}, the primary cubic and quintic analogous examples can be eloquently expressed in terms of trigonometric functions, as they are given in Corollaries~\ref{cor:3sqrt3} and~\ref{cor:5sqrt5}, respectively.

Finding a specific value of $\varphi(q)$, in particular, of $\varphi(e^{-\pi\sqrt{n}})$ for a positive rational number $n$, is equivalent to determining a specific value of a complete elliptic integral of the first kind, and also to determining the value of a certain ordinary hypergeometric series. These two approaches focus on elliptic integrals and their relations to the theory of positive-definite binary quadratic forms. Theorem~\ref{thm:thetagamma} below provides evaluations that are expressed in terms of gamma functions. Along these lines, a famous result of A.~Selberg and S.~Chowla \cite{SelbergChowla} provides a path toward the evaluation of $\varphi(e^{-\pi\sqrt{n}})$ for each positive rational number $n$.

\begin{theorem}\label{thm:thetagamma} We have
\begin{align}
\varphi(e^{-\pi\sqrt{3}}) &= \frac{3^{1/8} \Gamma^{3/2}\big(\tfrac{1}{3}\big)}{2^{2/3} \pi},\label{eq:e3}\\
\varphi(e^{-\pi\sqrt{5}}) &= (\sqrt{5} + 2)^{1/8}\left(\frac{\Gamma\big(\tfrac{1}{20}\big)\Gamma\big(\tfrac{3}{20}\big)\Gamma\big(\tfrac{7}{20}\big)
\Gamma\big(\tfrac{9}{20}\big)}{40\pi^3}\right)^{1/4},\\
\varphi(e^{-\pi\sqrt{7}}) &= \frac{\big\{\Gamma\big(\tfrac{1}{7}\big)\Gamma\big(\tfrac{2}{7}\big)\Gamma\big(\tfrac{4}{7}\big)\big\}^{1/2}}{\sqrt{2}\cdot7^{1/8}\pi},\\
\varphi(e^{-\pi\sqrt{11}}) &= (2 + (3\sqrt{33} + 17)^{1/3} - (3\sqrt{33} - 17)^{1/3})\notag\\
&\qquad\times\left(\frac{\Gamma\big(\tfrac{1}{11}\big)\Gamma\big(\tfrac{3}{11}\big)\Gamma\big(\tfrac{4}{11}\big)
\Gamma\big(\tfrac{5}{11}\big)\Gamma\big(\tfrac{9}{11}\big)}{72 \cdot 11^{1/4}\pi^3}\right)^{1/2},\\
\varphi(e^{-\pi\sqrt{13}}) &= (18 + 5\sqrt{13})^{1/8}\notag\\
\MoveEqLeft[2.5]\times\left(\frac{\Gamma\big(\frac{1}{52}\big)\Gamma\big(\frac{7}{52}\big)\Gamma\big(\frac{9}{52}\big)
\Gamma\big(\frac{11}{52}\big)\Gamma\big(\frac{15}{52}\big)\Gamma\big(\frac{17}{52}\big)
\Gamma\big(\frac{19}{52}\big)\Gamma\big(\frac{25}{52}\big)\Gamma\big(\frac{29}{52}\big)
\Gamma\big(\frac{31}{52}\big)\Gamma\big(\frac{47}{52}\big)\Gamma\big(\frac{49}{52}\big)}{1664 \pi^7}\right)^{1/4}\label{eq:e13},
\intertext{and}
\varphi(e^{-\pi\sqrt{17}}) &= 2^{-7/4}(17)^{-1/4}\pi^{-1/4}(\sqrt{17} - 4)^{1/16}\notag\\
&\qquad\times\bigg(1 + \sqrt{17} + \sqrt{2 + 2\sqrt{17}}\bigg)^{3/4}\Bigg\{\prod_{m = 1}^{68} \Gamma\bigg(\frac{m}{68}\bigg)^{\big(\frac{-68}{m}\big)}\Bigg\}^{1/16},\label{eq:e17}
\end{align}
where $\big(\tfrac{n}{m}\big)$ denotes the Kronecker symbol.
\end{theorem}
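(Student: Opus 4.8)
The plan is to reduce every evaluation to a value of the Dedekind eta function at a CM point and then to apply the Chowla--Selberg formula, as the reference to Selberg and Chowla in the text anticipates. The point of departure is the classical dictionary between $\varphi$, the complete elliptic integral $K$, and the singular modulus: writing $q=e^{-\pi\sqrt n}$ and letting $k_n$ be the singular modulus determined by $K'/K=\sqrt n$, one has $\varphi^2(e^{-\pi\sqrt n})=\tfrac{2}{\pi}K(k_n)$. Feeding in the standard identity $\theta_3=2^{1/3}\eta\,(kk')^{-1/6}$ together with the definition \eqref{def:G} of the class invariant collapses this to the single clean relation
\begin{equation*}
\varphi(e^{-\pi\sqrt n})=\sqrt{2}\,G_n^{2}\,\eta(i\sqrt n),
\end{equation*}
so that the task separates into the algebraic factor $G_n^{2}$, known for each of these $n$ from Ramanujan's table of class invariants together with \eqref{eq:G}, and the transcendental quantity $\eta(i\sqrt n)$.

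First I would evaluate $\eta(i\sqrt n)$ by the Chowla--Selberg formula for the field $\Q(\sqrt{-n})$. In the form I would use, if $-D$ is the fundamental discriminant, $h$ the class number, $w$ the number of units, $\chi=\big(\tfrac{-D}{\cdot}\big)$ the Kronecker symbol, and $\tau_1,\dots,\tau_h$ represent the ideal classes, then
\begin{equation*}
\prod_{j=1}^{h}\sqrt{\Im\tau_j}\,\big|\eta(\tau_j)\big|^{2}
=\frac{1}{(2\pi D)^{h/2}}\bigg(\prod_{m=1}^{D-1}\Gamma\Big(\tfrac mD\Big)^{\chi(m)}\bigg)^{w/4}.
\end{equation*}
The fundamental discriminant is $-n$ when $n\equiv 3 \pmod 4$ (the cases $n=3,7,11$) and $-4n$ when $n\equiv 1\pmod 4$ (the cases $n=5,13,17$); this is exactly why the gamma arguments have denominator $n$ in the former cases and $4n$ in the latter. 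Likewise $w=6$ for $n=3$ against $w=2$ for the rest, traced through the eventual exponent $w/(8h)$, accounts for the power $\tfrac32$ on $\Gamma(\tfrac13)$ as opposed to the power $\tfrac12$ appearing for $n=7,11$.

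For the three class-number-one fields $n=3,7,11$ the product on the left has a single factor, so Chowla--Selberg delivers $\eta$ at a CM point of discriminant $-n$ directly. The reflection formula $\Gamma(x)\Gamma(1-x)=\pi/\sin\pi x$ then pairs $m$ with $D-m$ --- whose Kronecker symbols are opposite because $\chi(-1)=-1$ --- and leaves precisely the gamma factors at the quadratic residues, namely $\{1,2,4\}$ for $n=7$ and $\{1,3,4,5,9\}$ for $n=11$, while the discarded factors turn into the displayed powers of $\pi$, of $2$, and of $D$. A subordinate point is that $i\sqrt n$ sits in the order of conductor $2$ rather than the maximal order; I would absorb the difference through the algebraicity of eta quotients at CM points of a fixed imaginary quadratic field, which contributes only an explicit algebraic constant.

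The real difficulty lies in the three remaining cases $n=5,13,17$, with $h=2,2,4$. Here Chowla--Selberg evaluates only the product over the entire ideal class group, whereas $\varphi(e^{-\pi\sqrt n})$ requires the single value $\eta(i\sqrt n)$ attached to the principal form. To isolate it I would use that the normalized ratios $\sqrt{\Im\tau_j}\,|\eta(\tau_j)|^2\big/\big(\sqrt{\Im\tau_1}\,|\eta(\tau_1)|^2\big)$ are algebraic, which reduces the left-hand side to $\big(\sqrt{\Im\tau_1}\,|\eta(\tau_1)|^2\big)^{h}$ times an explicit algebraic number; taking the $h$-th root and stripping the algebraic factor $\sqrt{\Im\tau_1}=n^{1/4}$ presents $\eta(i\sqrt n)$ as $\big(\prod_{m}\Gamma(m/D)^{\chi(m)}\big)^{w/(8h)}$ times an algebraic factor, and multiplication by $\sqrt2\,G_n^{2}$ then produces the exponent $\tfrac14$ on the gamma products for $n=5,13$ and the exponent $\tfrac1{16}$, left in the raw Kronecker form, for $n=17$. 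The main obstacle, and the step demanding the most care, is the computation of these algebraic eta ratios --- equivalently, of the singular moduli of the nonprincipal forms --- and the matching of the resulting products of class invariants and fundamental units to the stated closed forms $(\sqrt5+2)^{1/8}$, $(18+5\sqrt{13})^{1/8}$, and the elaborate $n=17$ prefactor; a reassuring internal check is that the first two prefactors are exactly $G_5^{3/2}$ and $G_{13}^{3/2}$.
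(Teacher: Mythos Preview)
Your approach is sound and aligns with the methods behind the result, but you should know that the paper does not supply its own proof of this theorem. It attributes the first five evaluations ($n=3,5,7,11,13$) to Borwein and Zucker, noting that Zucker used ``the theory of positive-definite quadratic forms, Dirichlet $L$-series, a formula of Dirichlet relating values of $L$-functions with values of the Dedekind eta-function $\eta(\tau)$ and related functions, and ideas from the classical paper of Selberg and Chowla'' --- which is precisely your programme. The sixth evaluation ($n=17$) is attributed to Muzaffar and Williams, who work through positive-definite binary quadratic forms of discriminant $-68$. The paper then goes on to prove the analogous statement for $n=37$ (Theorem~\ref{thm:e37}) by essentially the route you describe: relate $\varphi$ to $\eta$ via the class invariant, invoke the Selberg--Chowla formula for the eta product over the reduced forms of discriminant $-148$, and insert the known $G_{37}$.

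Your sketch is correct in its particulars: the relation $\varphi(e^{-\pi\sqrt n})=\sqrt{2}\,G_n^{2}\,\eta(i\sqrt n)$ holds, your identification of the fundamental discriminants, class numbers, and the exponent $w/(8h)$ is right, and your observation that the prefactors for $n=5,13$ equal $G_5^{3/2}$ and $G_{13}^{3/2}$ is a genuine consistency check. The two points you flag as needing care --- the passage from the CM point of the maximal order to $i\sqrt n$ when $n\equiv 3\pmod 4$, and the isolation of the principal form from the full class-group product when $h>1$ --- are exactly where the work lies in the cited papers; they handle them by the same mechanisms you propose (algebraicity of eta quotients, explicit reduction of forms), so carrying out your plan would reproduce those arguments rather than depart from them.
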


The first five evaluations \eqref{eq:e3}--\eqref{eq:e13} are given by J.~M.~Borwein and I.~J.~Zucker~\cite{Zucker}, \cite{BorweinZucker}, \cite[p.~298, Table~9.1]{BorweinBrothersPiAGM}, who evaluated $K(\sqrt{n})$, $1\leq n\leq 16$. Zucker~\cite{Zucker} used the theory of positive-definite quadratic forms, Dirichlet $L$-series, a formula of Dirichlet relating values of $L$-functions with values of the Dedekind eta-function $\eta(\tau)$ and related functions, and ideas from the classical paper of Selberg and Chowla \cite{SelbergChowla}.

In their paper \cite{MuzaffarWilliams}, H.~Muzaffar and K.~S.~Williams first establish a general theorem through the theory of positive-definite, primitive, integral, binary quadratic forms \cite[\linebreak pp.~1643--1645, Section~4]{MuzaffarWilliams}. They then use their general theorem to work out the special case when the discriminant equals $-68$ \cite[pp.~1645--1659, Section~5]{MuzaffarWilliams}. Their evaluation is equivalent to \eqref{eq:e17}.

We also provide the following value.

\begin{theorem}\label{thm:e37} We have
\begin{equation*}
\varphi(e^{-\pi\sqrt{37}}) = 2^{-1/4}(37)^{-1/4}\pi^{-1/4}(6 + \sqrt{37})^{3/8}\Bigg\{\prod_{m = 1}^{148} \Gamma\bigg(\frac{m}{148}\bigg)^{\big(\frac{-148}{m}\big)}\Bigg\}^{1/8}.
\end{equation*}
\end{theorem}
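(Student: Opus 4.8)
The plan is to follow the route of Selberg and Chowla used by Muzaffar and Williams for $\varphi(e^{-\pi\sqrt{17}})$ in \eqref{eq:e17}, now with the discriminant $-68$ replaced by $-148$. Since $37\equiv1\pmod4$, the field $\Q(\sqrt{-37})$ has fundamental discriminant $-4\cdot37=-148$, and the point $\tau_0=i\sqrt{37}$, which satisfies $e^{\pi i\tau_0}=e^{-\pi\sqrt{37}}$, is the CM point of the principal form $x^2+37y^2$. First I would record the classical dictionary
\begin{equation*}
\varphi^2(e^{-\pi\sqrt{37}})=\frac{2}{\pi}K(k_{37}),\qquad k=\frac{\theta_2^2}{\theta_3^2},\quad k'=\frac{\theta_4^2}{\theta_3^2},\quad \theta_2\theta_3\theta_4=2\eta^3,
\end{equation*}
which combine, after eliminating $\theta_2,\theta_4$, into
\begin{equation*}
\varphi(e^{-\pi\sqrt{37}})=\frac{2^{1/3}\,\eta(\tau_0)}{\big(k_{37}k'_{37}\big)^{1/6}}.
\end{equation*}
The problem thus splits into evaluating $\eta(\tau_0)$ at the principal CM point and evaluating the singular modulus $k_{37}$ (equivalently Ramanujan's class invariant $G_{37}$ through \eqref{def:G}--\eqref{eq:G}).

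Next I would invoke genus theory for $-148$. The reduced forms are $(1,0,37)$ and $(2,2,19)$, so the class number is $h(-148)=2$; since $-148=(-4)\cdot37$ is a product of two prime discriminants, there are two genera, whence each genus is a single class. This one-class-per-genus structure is the reason the final answer is so clean. Applying the Chowla--Selberg formula to $-148$ expresses the product
\begin{equation*}
\sqrt{a_0}\,\eta(\tau_0)^2\cdot\sqrt{a_1}\,\eta(\tau_1)^2
\end{equation*}
over the two forms in terms of powers of $2$, $\pi$, $\sqrt{148}$, and the gamma product $\prod_{m=1}^{148}\Gamma\big(\tfrac{m}{148}\big)^{\chi_{-148}(m)}$, where $\chi_{-148}(m)=\big(\tfrac{-148}{m}\big)$; this encodes the Dedekind zeta function $\zeta(s)L(s,\chi_{-148})$.

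To separate $\eta(\tau_0)$ from $\eta(\tau_1)$ I would use the genus character $\chi$ attached to the factorization $-148=(-4)\cdot37$. Summing the Epstein zeta functions of the two forms against $\chi$ gives $\sum_Q\chi(Q)Z_Q(s)=L(s,\chi_{-4})L(s,\chi_{37})$, and Kronecker's limit formula turns the value of this combination into the logarithm of the ratio $\sqrt{a_0}\,\eta(\tau_0)^2/(\sqrt{a_1}\,\eta(\tau_1)^2)$. By the Dirichlet class number formula, $L(1,\chi_{-4})=\tfrac{\pi}{4}$ while $L(1,\chi_{37})=\tfrac{2\log(6+\sqrt{37})}{\sqrt{37}}$, since $6+\sqrt{37}$ is the fundamental unit of $\Q(\sqrt{37})$ and that field has class number one. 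Hence the ratio is a rational power of $6+\sqrt{37}$, and together with the Chowla--Selberg product this solves for $\eta(\tau_0)^2$ as $(\text{gamma product})^{1/4}$ times a power of $6+\sqrt{37}$ and powers of $2,37$. Because each genus is a single class, the singular modulus $k_{37}$ appearing in $(k_{37}k'_{37})^{1/6}$ is likewise expressible through $\sqrt{37}$ and $6+\sqrt{37}$ alone, unlike the $-68$ case where the second class in the principal genus forces the nested radical $\sqrt{2+2\sqrt{17}}$ in \eqref{eq:e17}; substituting both evaluations into the displayed formula for $\varphi$ then yields the exponent $\tfrac18$ on the gamma product.

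The main obstacle is the algebraic bookkeeping: one must evaluate $k_{37}$ (i.e.\ $G_{37}$) explicitly, fix the normalizing constant and the precise rational exponents in the Chowla--Selberg and Kronecker-limit steps, and verify that all the contributions of $2$, $37$, $\pi$, and $6+\sqrt{37}$ coalesce into the stated prefactor $2^{-1/4}(37)^{-1/4}\pi^{-1/4}(6+\sqrt{37})^{3/8}$; in particular, combining the regulator contribution from the genus ratio with the one hidden in $(k_{37}k'_{37})^{1/6}$ must give exactly the power $3/8$. I would pin these constants down by carrying the Kronecker-limit computation through as Muzaffar and Williams did for $-68$, and finally confirm the identity to high precision against a direct numerical evaluation of $\varphi(e^{-\pi\sqrt{37}})$ from \eqref{def:phi}, using \eqref{eq:transform} to accelerate convergence if desired.
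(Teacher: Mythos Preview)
Your plan is sound and would succeed, but the paper takes a shorter route that avoids the genus-character separation entirely. The observation you are missing is that the second CM point $\tau_1=(1+\sqrt{-37})/2$ attached to the form $[2,2,19]$ is precisely the argument already appearing in the eta-quotient expression for $\varphi$: from $\varphi(e^{\pi i\tau})=\eta^2\big(\tfrac{\tau+1}{2}\big)/\eta(\tau+1)$ together with the class-invariant identity $\eta\big(\tfrac{\tau+1}{2}\big)/\eta(\tau)=2^{1/4}G_m$, one gets directly
\[
\varphi^{16}(e^{-\pi\sqrt{37}})=2^{6}\,G_{37}^{24}\,\bigl|\eta(\tau_0)\,\eta(\tau_1)\bigr|^{8}.
\]
Since the Chowla--Selberg formula evaluates the \emph{product} $|\eta(\tau_0)\eta(\tau_1)|$ over the whole class group in one stroke, there is no need for the Kronecker-limit computation with $L(s,\chi_{-4})L(s,\chi_{37})$ to isolate $\eta(\tau_0)$. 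The factor $(6+\sqrt{37})^{3/8}$ then drops out immediately from Ramanujan's tabulated value $G_{37}^{4}=6+\sqrt{37}$ (so $G_{37}^{24}=(6+\sqrt{37})^{6}$, and the sixteenth root gives the exponent $3/8$), rather than entering through the regulator of $\Q(\sqrt{37})$. The two sources are of course linked, but the paper's arrangement sidesteps exactly the ``algebraic bookkeeping'' you identify as the main obstacle: your approach would in effect rederive the relation between $G_{37}$ and the fundamental unit via the genus separation, and then have to check that the two appearances of $6+\sqrt{37}$ (from the ratio $\eta(\tau_0)/\eta(\tau_1)$ and from $(k_{37}k'_{37})^{1/6}$) combine to the power $3/8$ --- correct, but redundant.
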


\begin{proof}
The Dedekind eta function $\eta(\tau)$ is defined by~\cite[p.~256]{Cox}, \cite[p.~323]{BerndtV}
\begin{equation}\label{def:eta}
\eta(\tau) := q^{1/24} \prod_{m = 1}^{\infty} (1 - q^m) = q^{1/24} (q; q)_\infty, \quad q = e^{2\pi i \tau}, \quad \operatorname{Im} \tau > 0.
\end{equation}
It follows that \cite[pp.~259--260]{Cox}
\begin{equation}\label{eq:eta-transf}
\eta(\tau + 1) = e^{\pi i/12}\eta(\tau).
\end{equation}
The third Jacobi theta function $\theta_3(z, q)$ is defined by~\cite[pp.~463--464]{WhittakerWatson}, \cite[p.~3]{BerndtIII}
\begin{equation}\label{def:theta_3-q}
\theta_3(z, q) := \sum_{m = -\infty}^{\infty} q^{m^2}e^{2m i z}, \qquad z \in \mathbb{C}, \quad |q| < 1.
\end{equation}
From \eqref{def:phi}, \eqref{def:theta_3-q}, \eqref{def:eta}, and the Jacobi triple product identity \eqref{eq:Jacobi-triple-product}, as shown in \cite[p.~46, Theorem~12]{Knopp}, for $\operatorname{Im} \tau > 0$,
\begin{equation}\label{eq:phi-eta}
\varphi(e^{\pi i \tau}) = \theta_3(0, e^{\pi i \tau}) = \frac{\eta^2\big(\frac{\tau + 1}{2}\big)}{\eta(\tau + 1)}.
\end{equation}

For a positive rational number $m$, let $\tau = \sqrt{-m}$. Then, by \eqref{def:G} and \eqref{def:eta}, we obtain \cite[p.~220, (4.19)]{BerndtV}
\begin{equation}\label{eq:eta-G}
\frac{\eta\big(\frac{\tau + 1}{2}\big)}{\eta(\tau)} = 2^{1/4}G_m.
\end{equation}
Thus, from \eqref{eq:phi-eta}, \eqref{eq:eta-transf}, and \eqref{eq:eta-G}, with $\tau = \sqrt{-m}$,
\begin{equation}\label{eq:phi-eta-2}
\Bigg|\frac{\varphi^8(e^{\pi i \tau})}{\eta^4\big(\frac{\tau + 1}{2}\big)\eta^4(\tau + 1)}\Bigg|^2 = \Bigg|\frac{\eta^{24}\big(\frac{\tau + 1}{2}\big)}{\eta^{24}(\tau + 1)}\Bigg| = \Bigg|\frac{\eta^{24}\big(\frac{\tau + 1}{2}\big)}{\eta^{24}(\tau)}\Bigg| = 2^{6}G_m^{24}.
\end{equation}

Let $m = 37$. Rearranging \eqref{eq:phi-eta-2}, noting that $\varphi(e^{-\pi \sqrt{37}})$ is real, and using  \eqref{eq:eta-transf}, we find that
\begin{equation}\label{eq:e37}
\varphi^{16}(e^{-\pi \sqrt{37}}) = \bigg|\eta^4\bigg(\frac{\sqrt{-37}}{2} + \frac{1}{2}\bigg)\eta^4(\sqrt{-37})\bigg|^2 \cdot 2^{6}G_{37}^{24}.
\end{equation}
Now, from Ramanujan's \cite[p.~191]{BerndtV} or Weber's list \cite[p.~722]{Weber}, we know that
\begin{equation}\label{G37}
G_{37}^4 = 6 + \sqrt{37}.
\end{equation}
Furthermore, we know that \cite[p.~85, Table~I]{Dickson} the two positive, reduced, primitive forms $[a, b, c]$ of the fundamental discriminant $-148$ are $[1, 0, 37]$ and $[2, 2, 19]$. Thus, by using the Selberg--Chowla formula \cite[p.~110, (2)]{SelbergChowla}, as it is given in \cite[(1.5)]{HuardKaplanWilliams}, we have
\begin{equation}\label{eq:SelbergChowla}
\bigg|\eta^4\bigg(\frac{\sqrt{-37}}{2} + \frac{1}{2}\bigg)\eta^4(\sqrt{-37})\bigg| = 2^{-5} (37)^{-2} \pi^{-2} \prod_{m = 1}^{148} \Gamma\bigg(\frac{m}{148}\bigg)^{\big(\frac{-148}{m}\big)}.
\end{equation}
The proof is completed by substituting \eqref{G37} and \eqref{eq:SelbergChowla} into \eqref{eq:e37} and taking the $16$th root.
\end{proof}

We emphasize that all of our evaluations of theta function quotients in this paper are given by algebraic numbers. Most of our results can be expressed in terms of gamma functions by using the values in Theorems~\ref{thm:thetagamma} and~\ref{thm:e37}. Perhaps the approach through Ramanujan's ideas is simpler than approaches via other avenues. There exists an extensive literature on particular values of $\varphi(q)$. See our paper \cite{BerndtRebak} for a survey of some of these values.

As to be expected, evaluations for $\varphi(e^{-\pi\sqrt{n}})$ become more elaborate with increasing $n$.  Consequently, there may be multiple ways to record these values.  In choosing a formulation, we primarily considered two options.  If our evaluation was also obtained by Ramanujan, we use his representation.  In our evaluations, we employ general theorems found by Ramanujan or by the present authors.  We therefore usually use the form of the evaluation generated by these theorems.

\section{A cubic analogue of Entry~\ref{entry:Ramanujan}}

We present a cubic analogue of Entry~\ref{entry:Ramanujan}. Observe that the definition \eqref{def:3:u} in Theorem~\ref{thm:maincubic} corresponds to \eqref{def:u,v,w}, and parts \eqref{5:1+u+v}--\eqref{5:r} are matching in both statements.

\begin{theorem}\label{thm:maincubic}
\leqnomode
For $|q|<1$, let
\begin{align}\label{def:3:u}
u := \frac{2q^{1/3}f(q, q^5)}{\varphi(q^3)}.\tag{0}
\end{align}
Then,
\begin{equation}\label{3:1+u}
\frac{\varphi(q^{1/3})}{\varphi(q^3)} = 1 + u,\tag{i}
\end{equation}
\begin{equation}\label{3:p}
p := u = \frac{2q^{1/3}(-q; q^2)_\infty}{(-q^3; q^{6})_\infty^3} = \frac{2q^{1/3}\chi(q)}{\chi^3(q^3)},\tag{ii}
\end{equation}
and
\begin{equation}\label{3:phi-to-p}
\frac{\varphi^4(q)}{\varphi^4(q^3)} = 1 + p^3.\tag{iii}
\end{equation}
Moreover,
\begin{equation}\label{3:u}
u = (\alpha p)^{1/3},\tag{iv}
\end{equation}
where $\alpha$ is a root of the equation
\begin{equation}\label{3:r}
\xi - p^2 = 0.\tag{v}
\end{equation}
\reqnomode
\end{theorem}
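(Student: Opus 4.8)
The plan is to handle the five parts in the order \eqref{3:1+u}, \eqref{3:p}, \eqref{3:u}/\eqref{3:r}, \eqref{3:phi-to-p}, the last being the only substantial step. For \eqref{3:1+u} I would $3$-dissect the series: writing $\varphi(q^{1/3})=\sum_{n=-\infty}^{\infty}q^{n^2/3}$ and splitting according to $n\bmod 3$, the terms with $3\mid n$ contribute $\sum_{k}q^{3k^2}=\varphi(q^3)$, while the terms with $3\nmid n$ (the residues $n\equiv 1$ and $n\equiv -1$) give $q^{1/3}\sum_{k}\big(q^{3k^2+2k}+q^{3k^2-2k}\big)=2q^{1/3}f(q,q^5)$, the last equality following by matching the exponent $3n^2+2n$ against the general term of $f(a,b)$ in \eqref{eq:Jacobi-triple-product}, which forces $a=q^5$, $b=q$. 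This is exactly \eqref{3:1+u} with $u$ as in \eqref{def:3:u}. For \eqref{3:p} I would apply the Jacobi triple product \eqref{eq:Jacobi-triple-product} to $f(q,q^5)$ and to $\varphi(q^3)=f(q^3,q^3)$, cancel the common factor $(q^6;q^6)_\infty$, and use $(-q;q^2)_\infty=(-q;q^6)_\infty(-q^3;q^6)_\infty(-q^5;q^6)_\infty$ together with \eqref{def:chi} to reach both displayed forms of $p$. Parts \eqref{3:u} and \eqref{3:r} are then immediate: \eqref{3:r} forces $\alpha=p^2$, so $(\alpha p)^{1/3}=(p^3)^{1/3}=p=u$, in agreement with \eqref{3:p}.

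The crux is \eqref{3:phi-to-p}, and the idea is to bring in the cube roots of unity. Set $\omega=e^{2\pi i/3}$. Because $n^2\equiv 1\pmod 3$ whenever $3\nmid n$, the very dissection used for \eqref{3:1+u} applies verbatim after inserting the factor $\omega^{kn^2}$, giving, for $k=0,1,2$,
\begin{equation*}
\frac{\varphi(\omega^k q^{1/3})}{\varphi(q^3)}=1+\omega^k p.
\end{equation*}
Multiplying the three relations and using $\prod_{k=0}^{2}(1+\omega^k p)=1+p^3$, I obtain
\begin{equation*}
\prod_{k=0}^{2}\varphi(\omega^k q^{1/3})=\varphi^3(q^3)\,(1+p^3).
\end{equation*}
Thus \eqref{3:phi-to-p} will follow at once from the product formula
\begin{equation*}
\prod_{k=0}^{2}\varphi(\omega^k q^{1/3})=\frac{\varphi^4(q)}{\varphi(q^3)}.
\end{equation*}

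Proving this product formula is the main obstacle, and I would reduce everything to infinite products. From \eqref{eq:Jacobi-triple-product} and \eqref{def:chi} one has the classical representation $\varphi(x)=\frac{(x^2;x^2)_\infty^5}{(x;x)_\infty^2(x^4;x^4)_\infty^2}$, valid for $|x|<1$. The key auxiliary identity is the multiplication formula
\begin{equation*}
\prod_{k=0}^{2}(\omega^k x;\omega^k x)_\infty=\frac{(x^3;x^3)_\infty^4}{(x^9;x^9)_\infty},
\end{equation*}
which I would prove by examining $\prod_{k=0}^{2}(1-\omega^{km}x^m)$ for each $m\ge 1$: this equals $(1-x^m)^3$ when $3\mid m$ and $1-x^{3m}$ when $3\nmid m$, and reassembling the two cases over all $m$ produces the right-hand side. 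Note that the formula is unchanged if $\omega$ is replaced by $\omega^2$, since $\{\omega^{2k}:k=0,1,2\}=\{\omega^{k}:k=0,1,2\}$.

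Applying the multiplication formula with $x$ successively equal to $q^{1/3}$, $q^{2/3}$, and $q^{4/3}$ — recalling $(\omega^k q^{1/3})^2=\omega^{2k}q^{2/3}$ and $(\omega^k q^{1/3})^4=\omega^{k}q^{4/3}$ — and substituting into the product representation of each factor $\varphi(\omega^k q^{1/3})$, every $\omega$ disappears and the triple product collapses to $\varphi^4(q)/\varphi(q^3)$, which is the desired product formula. Combined with the consequence of the dissection above, this yields $\varphi^4(q)/\varphi^4(q^3)=1+p^3$, establishing \eqref{3:phi-to-p}. The only delicate point in the entire argument is the exponent bookkeeping in this final collapse; everything else is a direct application of the Jacobi triple product.
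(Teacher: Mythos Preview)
Your argument is correct. Parts \eqref{3:1+u}, \eqref{3:p}, \eqref{3:u}, \eqref{3:r} are handled essentially as in the paper (the paper quotes the dissection $\varphi(q)=\varphi(q^9)+2qf(q^3,q^{15})$ from Ramanujan's notebook rather than rederiving it, but the content is identical).

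For part \eqref{3:phi-to-p} you take a genuinely different route. The paper simply invokes the identity
\[
\frac{\varphi^4(q)}{\varphi^4(q^3)} = 1 + 8q\,\frac{\chi^3(q)}{\chi^9(q^3)}
\]
from \cite[p.~330, (4.6)]{BerndtV} and reads off the result via \eqref{3:p}; the substantive work is outsourced to that reference. Your proof is self-contained: from the $3$-dissection you get $\varphi(\omega^k q^{1/3})=\varphi(q^3)(1+\omega^k p)$, multiply over $k=0,1,2$ to obtain $\varphi^3(q^3)(1+p^3)$, and then identify this triple product with $\varphi^4(q)/\varphi(q^3)$ via the eta-type multiplication formula $\prod_{k}( \omega^k x;\omega^k x)_\infty=(x^3;x^3)_\infty^4/(x^9;x^9)_\infty$ applied to the representation $\varphi(x)=(x^2;x^2)_\infty^5/\{(x;x)_\infty^2(x^4;x^4)_\infty^2\}$. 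The exponent bookkeeping checks out: the three applications with $x=q^{1/3},q^{2/3},q^{4/3}$ produce exactly $(q^2;q^2)_\infty^{20}(q^3;q^3)_\infty^{2}(q^{12};q^{12})_\infty^{2}\big/\{(q;q)_\infty^{8}(q^4;q^4)_\infty^{8}(q^6;q^6)_\infty^{5}\}=\varphi^4(q)/\varphi(q^3)$. Your approach has the advantage of explaining \emph{why} a fourth power appears and of being independent of external references; the paper's citation is shorter but treats \eqref{3:phi-to-p} as a black box.
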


The essence of Theorem~\ref{thm:maincubic} can be captured in the formulation
\begin{equation*}
\frac{\varphi(q^{1/3})}{\varphi(q^3)} = 1 + \bigg(\frac{\varphi^4(q)}{\varphi^4(q^3)} - 1\bigg)^{1/3},
\end{equation*}
which is stated in Entry~1(iii) of Chapter~20 of Ramanujan's second notebook~\cite[p.~241]{RamanujanEarlierII}, \cite[pp.~345--349]{BerndtIII}.

\begin{proof}
To prove \eqref{3:1+u}, we use the identity \cite[p.~200]{RamanujanEarlierII}, \cite[p.~49, Corollary~(i)]{BerndtIII}
\begin{equation*}
\varphi(q) = \varphi(q^9) + 2qf(q^3, q^{15}).
\end{equation*}
Replace $q$ by $q^{1/3}$ and rearrange to conclude \eqref{3:1+u}.

To prove \eqref{3:p}, first apply the Jacobi triple product identity \eqref{eq:Jacobi-triple-product} to obtain the representations
\begin{align*}
f(q, q^5) &= (-q; q^6)_\infty (-q^5; q^6)_\infty (q^6; q^6)_\infty,\\
\varphi(q^3) &= f(q^3, q^3) = (-q^3; q^6)^2_\infty (q^6; q^6)_\infty.
\end{align*}
Thus,
\begin{equation}\label{eq:3:p-to-chi}
p = u = \frac{2q^{1/3}f(q, q^5)}{\varphi(q^3)} = \frac{2q^{1/3}(-q; q^6)_\infty (-q^5; q^6)_\infty}{(-q^3; q^6)^2_\infty}.
\end{equation}
Since
\begin{equation*}
(-q; q^2)_\infty = (-q; q^6)_\infty (-q^3; q^6)_\infty (-q^5; q^6)_\infty,
\end{equation*}
using \eqref{eq:3:p-to-chi}, we arrive at
\begin{align*}
p = \frac{2q^{1/3}(-q; q^2)_\infty}{(-q^3; q^{6})_\infty^3} = \frac{2q^{1/3}\chi(q)}{\chi^3(q^3)},
\end{align*}
by using the definition \eqref{def:chi} of $\chi(q)$. Thus we have proved \eqref{3:p}.

Part \eqref{3:phi-to-p} is a direct consequence of the identity \cite[p.~330, (4.6)]{BerndtV}
\begin{equation*}
\frac{\varphi^4(q)}{\varphi^4(q^3)} = 1 + 8q\frac{\chi^3(q)}{\chi^9(q^3)}
\end{equation*}
and \eqref{3:p}.

Parts \eqref{3:u} and \eqref{3:r} follow from the identity $u = p$, i.e., from \eqref{3:p}.
\end{proof}

\section{Examples for cubic identities}\label{section:cubic-examples}

To establish cubic examples, we need the values of pairs of class invariants $G_n$ and $G_{9n}$, for certain positive rational numbers $n$. Ramanujan \cite[pp.~189--199]{BerndtV} calculated the class invariant $G_{n}$ for a total of $78$ values of $n$.  Among these, there are $11$ values of $G_n$ for which $G_{9n}$ is also given. These are for $n = 1, 3, 5, 7, 9, 13, 17, 25, 37, 49,$ and $85$. In view of \eqref{eq:G}, we added the values when $n = 1/3$ to this list. For $n = 11$ and $81$, we determined the values of $G_{99}$ and $G_{729}$, which were not given by Ramanujan, and for $n = 27$, the value of $G_{243}$ is given by Watson~\cite{Watson3}. In summary, when $n$ is a positive integer or its reciprocal, we calculated the values of a total of $25$ quotients of theta functions. With the help of Theorems~\ref{thm:thetagamma} and~\ref{thm:e37}, most of them can be expressed in terms of gamma functions. The mentioned examples are summarized in Table~\ref{table:cubic}. Some of the theta functions for which we have determined values can be evaluated via other results in our paper.  These instances are marked by an asterisk in Table \ref{table:cubic}. The forms of the alternative evaluations may be different.

\begin{table}[ht]
\caption{Overview of cubic examples}\label{table:cubic}
\centering
\begin{tabular}{| r | r | r | l | r | l |}
	\hline &&&&&\\[-1em]
	$n$ & $9n$ & $3\sqrt{n}$ & Ex. for Thm.~\ref{thm:3n} & $9\sqrt{n}$ & Ex. for Thm.~\ref{thm:9n}\\
	\hline &&&&&\\[-1em]
	$1/3$ & $3$ & $\sqrt{3}$ & \eqref{eq:transform}\textsuperscript{$\ast$} & $3\sqrt{3}$ & Corollary~\ref{cor:3sqrt3} \\
	$1$ & $9$ & $3$ & Corollary~\ref{cor:3} & $9$ & Corollary~\ref{cor:9} \\
	$3$ & $27$ & $3\sqrt{3}$ & Corollary~\ref{cor:3sqrt3}\textsuperscript{$\ast$} & $9\sqrt{3}$ & Corollary~\ref{cor:9sqrt3} \\
	$5$ & $45$ & $3\sqrt{5}$ & Corollary~\ref{cor:3sqrt5} & $9\sqrt{5}$ & Corollary~\ref{cor:9sqrt5} \\
	$7$ & $63$ & $3\sqrt{7}$ & Corollary~\ref{cor:3sqrt7} & $9\sqrt{7}$ & Corollary~\ref{cor:9sqrt7} \\
	$9$ & $81$ & $9$ & Corollary~\ref{cor:9}\textsuperscript{$\ast$} & $27$ & Corollary~\ref{cor:27} \\
	$13$ & $117$ & $3\sqrt{13}$ & Corollary~\ref{cor:3sqrt13} & $9\sqrt{13}$ & Corollary~\ref{cor:9sqrt13} \\
	$17$ & $153$ & $3\sqrt{17}$ & Corollary~\ref{cor:3sqrt17} & $9\sqrt{17}$ & Corollary~\ref{cor:9sqrt17} \\
	$25$ & $225$ & $15$ & Corollary~\ref{cor:3:15} & $45$ & Corollary~\ref{cor:45} \\
	$37$ & $333$ & $3\sqrt{37}$ & Corollary~\ref{cor:3sqrt37} & $9\sqrt{37}$ & Corollary~\ref{cor:9sqrt37} \\
	$49$ & $441$ & $21$ & Corollary~\ref{cor:21} & $63$ & Corollary~\ref{cor:63} \\
	$85$ & $765$ & $3\sqrt{85}$ & Corollary~\ref{cor:3sqrt85} & $9\sqrt{85}$ & Corollary~\ref{cor:9sqrt85} \\
	\hline &&&&&\\[-1em]
	$11$ & $99$ & $3\sqrt{11}$ & Corollary~\ref{cor:3sqrt11} & $9\sqrt{11}$ & Corollary~\ref{cor:9sqrt11} \\
	$27$ & $243$ & $9\sqrt{3}$ & Corollary~\ref{cor:9sqrt3}\textsuperscript{$\ast$} & $27\sqrt{3}$ & Corollary~\ref{cor:27sqrt3} \\
	$81$ & $729$ & $27$ & Corollary~\ref{cor:27}\textsuperscript{$\ast$} & $81$ & Corollary~\ref{cor:81} \\
	\hline
\end{tabular}
\end{table}

We conclude this section with two further examples, when $n$ is a positive rational number that is not an integer or its reciprocal. These are for $n = 5/9$ given in Corollary~\ref{cor:sqrt5s3}, and for $n = 7/9$ given in Corollary~\ref{cor:sqrt7s3}.

Throughout this section, we use the definitions of Theorem~\ref{thm:maincubic}.

\begin{lemma}\label{lemma:3:p} If $q = e^{-\pi\sqrt{n}}$, for each positive rational number $n$, then
\begin{equation*}
p = \frac{\sqrt{2} G_n}{G_{9n}^3}.
\end{equation*}
\end{lemma}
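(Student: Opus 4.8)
The plan is to start from the product representation of $p$ recorded in part~\eqref{3:p} of Theorem~\ref{thm:maincubic}, namely
\begin{equation*}
p = \frac{2q^{1/3}\chi(q)}{\chi^3(q^3)},
\end{equation*}
and to rewrite both $\chi(q)$ and $\chi(q^3)$ in terms of class invariants via the definition~\eqref{def:G}. The crucial observation is that if $q = e^{-\pi\sqrt{n}}$, then $q^3 = e^{-3\pi\sqrt{n}} = e^{-\pi\sqrt{9n}}$, so that $q^3$ is precisely the argument attached to the index $9n$. Hence the two factors $\chi(q)$ and $\chi(q^3)$ appearing in $p$ are governed by $G_n$ and $G_{9n}$, respectively.

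First I would invert the definition~\eqref{def:G}. Taking $q = e^{-\pi\sqrt{n}}$ gives $\chi(q) = 2^{1/4}q^{1/24}G_n$, and, applying~\eqref{def:G} with $n$ replaced by $9n$ and $q$ replaced by $q^3 = e^{-\pi\sqrt{9n}}$,
\begin{equation*}
\chi(q^3) = 2^{1/4}(q^3)^{1/24}G_{9n} = 2^{1/4}q^{1/8}G_{9n}.
\end{equation*}
Next I would substitute these two expressions into the formula for $p$ and collect the powers of $2$ and of $q$ separately. The powers of $2$ combine to $2^{1+1/4-3/4} = 2^{1/2} = \sqrt{2}$, while the exponent of $q$ is
\begin{equation*}
\tfrac{1}{3} + \tfrac{1}{24} - 3\cdot\tfrac{1}{8} = \tfrac{8}{24} + \tfrac{1}{24} - \tfrac{9}{24} = 0,
\end{equation*}
so the factors of $q$ cancel completely. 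What remains is exactly $p = \sqrt{2}\,G_n/G_{9n}^3$, as claimed.

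The argument is essentially a bookkeeping exercise, so I do not anticipate any serious obstacle. The only points requiring care are the identification $q^3 = e^{-\pi\sqrt{9n}}$, which is what licenses replacing the index $n$ by $9n$ in the second class invariant, and the verification that the fractional powers of $q$ cancel exactly; both follow from the definitions already in hand.
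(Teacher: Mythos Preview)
Your proof is correct and follows exactly the same approach as the paper's: both start from the expression $p = 2q^{1/3}\chi(q)/\chi^3(q^3)$ of Theorem~\ref{thm:maincubic}\eqref{3:p} and substitute the definition~\eqref{def:G} of the class invariant (with the key identification $q^3 = e^{-\pi\sqrt{9n}}$). The paper simply states the computation in one line, whereas you have spelled out the cancellation of the powers of $2$ and of $q$ explicitly.
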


\begin{proof}
From Theorem~\ref{thm:maincubic}\eqref{3:p} and \eqref{def:G},
\begin{equation*}
p = \frac{2q^{1/3}\chi(q)}{\chi^3(q^3)} = \frac{\sqrt{2} G_n}{G_{9n}^3}.\tag*{\qedhere}
\end{equation*}
\end{proof}

\begin{theorem}\label{thm:3n} If $n$ is a positive rational number, then
\begin{equation*}
\frac{\varphi(e^{-3\pi\sqrt{n}})}{\varphi(e^{-\pi\sqrt{n}})} = \frac{1}{\sqrt{3}}\bigg(1 + \frac{2\sqrt{2} G_{9n}^3}{G_n^9}\bigg)^{1/4}.
\end{equation*}
\end{theorem}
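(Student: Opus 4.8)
The plan is to reduce the left-hand quotient to one of the form $\varphi(q)/\varphi(q^3)$, to which Theorem~\ref{thm:maincubic}\eqref{3:phi-to-p} applies verbatim, and then to read off the class invariants through Lemma~\ref{lemma:3:p} together with the symmetry \eqref{eq:G}. The one genuinely clever move is to pass from the modulus $\sqrt n$ to its reciprocal $1/\sqrt n$ via the theta transformation \eqref{eq:transform}: this is simultaneously what manufactures the prefactor $1/\sqrt3$ and, crucially, what inverts the quotient so that \eqref{3:phi-to-p} delivers the fourth \emph{root} $(1+p^3)^{1/4}$ rather than its reciprocal $(1+p^3)^{-1/4}$. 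Indeed, applying \eqref{3:phi-to-p} naively at $q=e^{-\pi\sqrt n}$ would yield $\varphi(e^{-3\pi\sqrt n})/\varphi(e^{-\pi\sqrt n}) = (1+p^3)^{-1/4}$ with $p=\sqrt2\,G_n/G_{9n}^3$, which is the same number in a different guise; the transformation is what produces the stated symmetric form.

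Concretely, I would first invoke \eqref{eq:transform} twice. With $n$ replaced by $9n$ it gives $\varphi(e^{-3\pi\sqrt n})=\varphi(e^{-\pi\sqrt{9n}})=(9n)^{-1/4}\varphi(e^{-\pi/\sqrt{9n}})$, and with the original $n$ it gives $\varphi(e^{-\pi\sqrt n})=n^{-1/4}\varphi(e^{-\pi/\sqrt n})$. Dividing, the powers of $n$ cancel against the power of $9n$, leaving the bare factor $9^{-1/4}=1/\sqrt3$, so that
\begin{equation*}
\frac{\varphi(e^{-3\pi\sqrt n})}{\varphi(e^{-\pi\sqrt n})} = \frac{1}{\sqrt3}\,\frac{\varphi(e^{-\pi/\sqrt{9n}})}{\varphi(e^{-\pi/\sqrt n})}.
\end{equation*}

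Next I would set $q=e^{-\pi/\sqrt{9n}}=e^{-\pi\sqrt{1/(9n)}}$, so that $q^3=e^{-3\pi/\sqrt{9n}}=e^{-\pi/\sqrt n}$ and the surviving quotient is precisely $\varphi(q)/\varphi(q^3)$. Theorem~\ref{thm:maincubic}\eqref{3:phi-to-p} then gives $\varphi(q)/\varphi(q^3)=(1+p^3)^{1/4}$, and Lemma~\ref{lemma:3:p}, applied with $n$ replaced by $1/(9n)$, evaluates $p=\sqrt2\,G_{1/(9n)}/G_{1/n}^3$. The symmetry \eqref{eq:G} rewrites $G_{1/(9n)}=G_{9n}$ and $G_{1/n}=G_n$, so $p=\sqrt2\,G_{9n}/G_n^3$ and hence $p^3=2\sqrt2\,G_{9n}^3/G_n^9$; substituting into the displayed identity yields exactly the asserted formula.

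I do not anticipate any serious obstacle once the transformation step is recognized: the remainder is bookkeeping of the $n^{\pm1/4}$ exponents and two applications of \eqref{eq:G}. The single point demanding care is that Lemma~\ref{lemma:3:p} must be invoked at the reciprocal modulus $1/(9n)$, not at $n$, because it is the quotient $\varphi(q)/\varphi(q^3)$ at $q=e^{-\pi\sqrt{1/(9n)}}$ that we have arranged to evaluate; applying the lemma at the wrong modulus would reproduce the equivalent but non-symmetric form mentioned above and obscure the intended expression.
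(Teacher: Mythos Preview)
Your proof is correct and follows essentially the same route as the paper: apply Theorem~\ref{thm:maincubic}\eqref{3:phi-to-p} together with Lemma~\ref{lemma:3:p}, pass to the reciprocal modulus $1/(9n)$, invoke \eqref{eq:G}, and use the transformation formula \eqref{eq:transform} twice to produce the factor $1/\sqrt3$. The only cosmetic difference is the order of operations---the paper first records \eqref{3:phi-to-p} at $q=e^{-\pi\sqrt n}$ and then substitutes $n\mapsto(9n)^{-1}$, whereas you transform first and then apply \eqref{3:phi-to-p} directly at $q=e^{-\pi\sqrt{1/(9n)}}$---but the mathematical content is identical.
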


Theorem~\ref{thm:3n} is stated in \cite[p.~330, (4.5)]{BerndtV}, \cite[(3.10)]{BerndtChan}.

\begin{proof}
By using Theorem~\ref{thm:maincubic}\eqref{3:phi-to-p} with $q = e^{-\pi\sqrt{n}}$, and by Lemma~\ref{lemma:3:p}, we find that
\begin{equation}\label{eq:3sqrtn}
\frac{\varphi^4(e^{-\pi\sqrt{n}})}{\varphi^4(e^{-3\pi\sqrt{n}})} = 1 + \frac{2\sqrt{2} G_n^3}{G_{9n}^9}.
\end{equation}
With the substitution $n \mapsto (9n)^{-1}$, \eqref{eq:G}, and two applications of the transformation formula \eqref{eq:transform}, after rearrangement, we complete the proof.
\end{proof}

\begin{theorem}\label{thm:9n} If $n$ is a positive rational number, then
\begin{equation*}
\frac{\varphi(e^{-9\pi\sqrt{n}})}{\varphi(e^{-\pi\sqrt{n}})} = \frac{1}{3}\bigg(1 + \frac{\sqrt{2}G_{9n}}{G_n^3}\bigg).
\end{equation*}
\end{theorem}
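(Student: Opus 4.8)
The plan is to deduce Theorem~\ref{thm:9n} from the single base identity in Theorem~\ref{thm:maincubic}\eqref{3:1+u}, namely $\varphi(q^{1/3})/\varphi(q^3) = 1 + p$, applied at one well-chosen value of $q$, followed by the theta transformation formula \eqref{eq:transform}. The key observation is that the target combination $\sqrt{2}\,G_{9n}/G_n^3$ is precisely the value of $p$ at the \emph{reciprocal} argument $1/(9n)$, once the symmetry $G_n = G_{1/n}$ is invoked. Accordingly, I would set $q = e^{-\pi/(3\sqrt{n})}$, so that $q^3 = e^{-\pi/\sqrt{n}}$ and $q^{1/3} = e^{-\pi/(9\sqrt{n})}$, and record
\begin{equation*}
\frac{\varphi(e^{-\pi/(9\sqrt{n})})}{\varphi(e^{-\pi/\sqrt{n}})} = 1 + p.
\end{equation*}

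Next I would evaluate $p$ at this $q$. Writing $q = e^{-\pi\sqrt{m}}$ with $m = 1/(9n)$, Lemma~\ref{lemma:3:p} gives $p = \sqrt{2}\,G_{1/(9n)}/G_{1/n}^3$, since $9m = 1/n$. Two applications of the reciprocal law \eqref{eq:G}, namely $G_{1/(9n)} = G_{9n}$ and $G_{1/n} = G_n$, then collapse this to $p = \sqrt{2}\,G_{9n}/G_n^3$, so that the display above reads $\varphi(e^{-\pi/(9\sqrt{n})})/\varphi(e^{-\pi/\sqrt{n}}) = 1 + \sqrt{2}\,G_{9n}/G_n^3$.

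It remains to convert the left-hand ratio into $\varphi(e^{-9\pi\sqrt{n}})/\varphi(e^{-\pi\sqrt{n}})$. I would apply the transformation formula \eqref{eq:transform} to numerator and denominator separately: with $N = 81n$ one has $\varphi(e^{-\pi/(9\sqrt{n})}) = \varphi(e^{-\pi/\sqrt{81n}}) = (81n)^{1/4}\varphi(e^{-9\pi\sqrt{n}})$, and with $N = n$ one has $\varphi(e^{-\pi/\sqrt{n}}) = n^{1/4}\varphi(e^{-\pi\sqrt{n}})$. The prefactors combine to $(81n)^{1/4}/n^{1/4} = 81^{1/4} = 3$, so the left-hand side equals $3\,\varphi(e^{-9\pi\sqrt{n}})/\varphi(e^{-\pi\sqrt{n}})$; dividing by $3$ yields the claimed formula.

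There is no serious obstacle here, as every ingredient is already in place; the only point requiring care is the bookkeeping of the transformation-formula prefactors and the recognition that the base identity must be applied at the reciprocal argument $1/(9n)$ rather than at $n$ itself. This is exactly the structural device used in the proof of Theorem~\ref{thm:3n}, where the substitution $n \mapsto (9n)^{-1}$ plays the same role; the present argument is in fact cleaner because the cubic identity \eqref{3:1+u} is linear in $p$, so no fourth roots intervene.
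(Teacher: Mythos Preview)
Your proof is correct and is essentially the paper's own argument: the paper combines Theorem~\ref{thm:maincubic}\eqref{3:1+u} and \eqref{3:p} at $q=e^{-\pi\sqrt{n}}$, then substitutes $n\mapsto(9n)^{-1}$, invokes \eqref{eq:G}, and applies \eqref{eq:transform} twice---which amounts exactly to your direct choice $q=e^{-\pi/(3\sqrt n)}$ followed by the same bookkeeping. The only cosmetic difference is that you fold the substitution into the initial choice of $q$.
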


Theorem~\ref{thm:9n} is stated in \cite[p.~334, (5.7)]{BerndtV}, \cite[(3.30)]{BerndtChan}.

\begin{proof}
After combining Theorem~\ref{thm:maincubic}\eqref{3:1+u} and Theorem~\ref{thm:maincubic}\eqref{3:p} with $q=e^{-\pi\sqrt{n}}$, and using Lemma~\ref{lemma:3:p}, we finish the proof in the same manner as in the proof of Theorem~\ref{thm:3n}.
\end{proof}

\begin{corollary}\label{cor:3sqrt3} We have
\begin{equation*}
\varphi(e^{-3\pi\sqrt{3}}) = 3^{-3/4}\varphi(e^{-\pi\sqrt{3}})\{1 + 2^{1/3}\} = 3^{-3/4}\varphi(e^{-\pi\sqrt{3}})\Bigg\{1 + \bigg(\frac{1}{\cos\frac{\pi}{3}}\bigg)^{1/3}\Bigg\}.
\end{equation*}
\end{corollary}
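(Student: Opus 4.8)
The plan is to specialize Theorem~\ref{thm:9n} to $n = 1/3$. The point of this choice is that $9\sqrt{1/3} = 9/\sqrt{3} = 3\sqrt{3}$, so the numerator of the left-hand side of Theorem~\ref{thm:9n} becomes precisely $\varphi(e^{-3\pi\sqrt{3}})$, the quantity we want to evaluate. Explicitly, the theorem yields
\begin{equation*}
\frac{\varphi(e^{-3\pi\sqrt{3}})}{\varphi(e^{-\pi/\sqrt{3}})} = \frac{1}{3}\bigg(1 + \frac{\sqrt{2}\,G_{3}}{G_{1/3}^{3}}\bigg).
\end{equation*}
First I would invoke the reflection property \eqref{eq:G}, which gives $G_{1/3} = G_{3}$ and therefore collapses the right-hand side to $\tfrac{1}{3}\big(1 + \sqrt{2}\,G_{3}^{-2}\big)$.

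The second step is to substitute the value of the class invariant $G_{3}$. From Ramanujan's table of class invariants \cite[pp.~189--199]{BerndtV} we have $G_{3} = 2^{1/12}$, so that $G_{3}^{2} = 2^{1/6}$ and $\sqrt{2}\,G_{3}^{-2} = 2^{1/2 - 1/6} = 2^{1/3}$. The right-hand side thus becomes $\tfrac{1}{3}\big(1 + 2^{1/3}\big)$.

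It then remains only to re-express the denominator $\varphi(e^{-\pi/\sqrt{3}})$ in terms of $\varphi(e^{-\pi\sqrt{3}})$. This is exactly what the theta transformation formula \eqref{eq:transform} provides: with $n = 3$ it gives $\varphi(e^{-\pi/\sqrt{3}}) = 3^{1/4}\varphi(e^{-\pi\sqrt{3}})$. Multiplying through by this factor produces the overall constant $3^{1/4}/3 = 3^{-3/4}$ and yields the first claimed equality. For the trigonometric form on the far right, I would simply observe that $\cos\tfrac{\pi}{3} = \tfrac{1}{2}$, so $2^{1/3} = \big(1/\cos\tfrac{\pi}{3}\big)^{1/3}$.

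There is no serious obstacle here: the argument is a one-line specialization of Theorem~\ref{thm:9n} followed by a single class-invariant lookup and one application of \eqref{eq:transform}. The only places that reward care are the arithmetic of fractional powers of $2$ (verifying that $\sqrt{2}\,G_3^{-2}$ really reduces to $2^{1/3}$) and the normalizing exponent delivered by \eqref{eq:transform}. As an independent consistency check, the same value is reachable via Theorem~\ref{thm:3n} with $n = 3$ (the route flagged with an asterisk in Table~\ref{table:cubic}), which instead requires the more complicated invariant $G_{27}$; agreement of the two computations is reassuring.
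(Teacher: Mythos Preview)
Your proposal is correct and follows essentially the same approach as the paper: specialize Theorem~\ref{thm:9n} to $n=1/3$, use $G_{1/3}=G_3=2^{1/12}$ to reduce the right-hand side to $\tfrac{1}{3}(1+2^{1/3})$, and apply \eqref{eq:transform} to convert $\varphi(e^{-\pi/\sqrt{3}})$ to $3^{1/4}\varphi(e^{-\pi\sqrt{3}})$. The only cosmetic difference is that the paper records the quotient as $\varphi(e^{-9\pi/\sqrt{3}})/\varphi(e^{-\pi/\sqrt{3}})$ before simplifying $9/\sqrt{3}=3\sqrt{3}$, whereas you make this identification at the outset.
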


Note that Corollary~\ref{cor:3sqrt3} is the cubic analogue of Theorem~\ref{thm:enigmatic}.

\begin{proof}
We apply Theorem~\ref{thm:9n} with $n = 1/3$. From \cite[p.~189]{BerndtV}, with the use of \eqref{eq:G},
\begin{equation}\label{G3}
G_{1/3} = G_3 = 2^{1/12}.
\end{equation}
Thus,
\begin{equation*}
\frac{\varphi(e^{-9\pi/\sqrt{3}})}{\varphi(e^{-\pi/\sqrt{3}})} = \frac{1}{3}\bigg(1 + \frac{\sqrt{2}\cdot 2^{1/12}}{2^{1/4}}\bigg) = \frac{1}{3}(1 + 2^{1/3}) .
\end{equation*}
Finally, using the transformation formula \eqref{eq:transform} twice, the first representation follows. The second form is obtained by $\cos(\pi/3) = 1/2$.
\end{proof}

By using Theorem~\ref{thm:3n} with $n = 3$ and with the values $G_{3}$ from \eqref{G3} and $G_{27}$ from \eqref{G27}, we obtain
\begin{equation*}
\frac{\varphi(e^{-3\pi\sqrt{3}})}{\varphi(e^{-\pi\sqrt{3}})} = \frac{1}{\sqrt{3}}\bigg(\frac{2^{1/3} + 1}{2^{1/3} - 1}\bigg)^{1/4}.
\end{equation*}
Another representation for the value $\varphi(e^{-\pi\sqrt{3}})/\varphi(e^{-3\pi\sqrt{3}})$ was established by Jinhee Yi \cite[Theorem~4.10(iii)]{Yi}.

\begin{corollary}\label{cor:3} We have
\begin{equation*}
\frac{\varphi(e^{-3\pi})}{\varphi(e^{-\pi})} = \frac{1}{(6\sqrt{3} - 9)^{1/4}}.
\end{equation*}
\end{corollary}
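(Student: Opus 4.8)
The plan is to obtain this evaluation as a direct specialization of Theorem~\ref{thm:3n}. Taking $n = 1$ there, we have $3\sqrt{n} = 3$ and $\pi\sqrt{n} = \pi$, so the left-hand side is exactly $\varphi(e^{-3\pi})/\varphi(e^{-\pi})$, and the corollary reduces to inserting the class invariants $G_1$ and $G_9$ and simplifying. Accordingly, the first step is to record the two needed values: $G_1 = 1$ is classical, and $G_9$ appears in Ramanujan's list \cite[p.~189]{BerndtV}; the quantity that actually enters the formula is $G_9^3 = (1 + \sqrt{3})/\sqrt{2}$.

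Second, I would substitute these into Theorem~\ref{thm:3n}. The expression under the fourth root collapses pleasantly, since
\begin{equation*}
1 + \frac{2\sqrt{2}\,G_9^3}{G_1^9} = 1 + 2\sqrt{2}\cdot\frac{1 + \sqrt{3}}{\sqrt{2}} = 3 + 2\sqrt{3},
\end{equation*}
and therefore, writing $1/\sqrt{3} = 9^{-1/4}$,
\begin{equation*}
\frac{\varphi(e^{-3\pi})}{\varphi(e^{-\pi})} = \frac{1}{\sqrt{3}}\big(3 + 2\sqrt{3}\big)^{1/4} = \bigg(\frac{3 + 2\sqrt{3}}{9}\bigg)^{1/4}.
\end{equation*}

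Third, it remains only to rewrite this in the preferred form, with the surd in the denominator. This amounts to the algebraic identity $\tfrac{3 + 2\sqrt{3}}{9} = \tfrac{1}{6\sqrt{3} - 9}$, equivalent to $(3 + 2\sqrt{3})(6\sqrt{3} - 9) = 9$, which follows by a one-line expansion. Taking fourth roots then yields the claimed value $\varphi(e^{-3\pi})/\varphi(e^{-\pi}) = (6\sqrt{3} - 9)^{-1/4}$. I do not anticipate any genuine obstacle here: the result is essentially a one-line application of Theorem~\ref{thm:3n}, and the only points demanding care are citing the correct value of $G_9^3$ and verifying the final rationalization, so that the answer is displayed exactly as stated rather than in the equivalent but less tidy shape $\big((3 + 2\sqrt{3})/9\big)^{1/4}$.
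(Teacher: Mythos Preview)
Your proposal is correct and follows essentially the same approach as the paper: apply Theorem~\ref{thm:3n} with $n=1$, insert $G_1=1$ and $G_9^3=(1+\sqrt{3})/\sqrt{2}$, and simplify $(3+2\sqrt{3})/9$ to $(6\sqrt{3}-9)^{-1}$. The paper's proof is virtually identical, line for line.
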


Corollary~\ref{cor:3} was recorded by Ramanujan in his first notebook \cite[p.~284]{RamanujanEarlierI}, \cite[pp.~327--328]{BerndtV}, and was first proved in print by Heng Huat Chan and the first author \cite{BerndtChan}.

\begin{proof}
Invoke Theorem~\ref{thm:3n} for $n = 1$. From \cite[p.~189]{BerndtV}, $G_1 = 1$ and
\begin{equation}\label{G9}
G_9 = \bigg(\frac{1 + \sqrt{3}}{\sqrt{2}}\bigg)^{1/3}.
\end{equation}
Thus,
\begin{equation}\label{e3}
\frac{\varphi(e^{-3\pi})}{\varphi(e^{-\pi})} = \frac{1}{\sqrt{3}}\bigg(1 + 2\sqrt{2}\bigg(\frac{1 + \sqrt{3}}{\sqrt{2}}\bigg)\bigg)^{1/4} = \bigg(\frac{3 + 2\sqrt{3}}{9}\bigg)^{1/4} = \frac{1}{(6\sqrt{3} - 9)^{1/4}}.
\end{equation}
\end{proof}

\begin{corollary}\label{cor:9} We have
\begin{equation*}
\frac{\varphi(e^{-9\pi})}{\varphi(e^{-\pi})} = \frac{1 + (2(\sqrt{3} + 1))^{1/3}}{3}.
\end{equation*}
\end{corollary}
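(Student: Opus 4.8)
The plan is to specialize Theorem~\ref{thm:9n} to the single value $n = 1$ and then substitute the two class invariants that already appear in the proof of Corollary~\ref{cor:3}. Setting $n = 1$ gives $\sqrt{n} = 1$, so the theorem immediately yields
\begin{equation*}
\frac{\varphi(e^{-9\pi})}{\varphi(e^{-\pi})} = \frac{1}{3}\bigg(1 + \frac{\sqrt{2}\,G_9}{G_1^3}\bigg).
\end{equation*}
No transformation formula is needed here, in contrast to Corollary~\ref{cor:3sqrt3}, precisely because $\sqrt{n} = 1$ already matches the desired arguments $e^{-9\pi}$ and $e^{-\pi}$.

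Next I would insert the known values $G_1 = 1$ and the value of $G_9$ recorded in \eqref{G9}, namely $G_9 = \big((1 + \sqrt{3})/\sqrt{2}\big)^{1/3}$. The only real work is the algebraic simplification of the quotient $\sqrt{2}\,G_9/G_1^3 = \sqrt{2}\,\big((1 + \sqrt{3})/\sqrt{2}\big)^{1/3}$. Collecting the powers of $2$ gives $2^{1/2 - 1/6}(1 + \sqrt{3})^{1/3} = 2^{1/3}(1 + \sqrt{3})^{1/3} = \big(2(\sqrt{3} + 1)\big)^{1/3}$, which is exactly the cube root appearing in the statement.

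Substituting this back produces
\begin{equation*}
\frac{\varphi(e^{-9\pi})}{\varphi(e^{-\pi})} = \frac{1}{3}\Big(1 + \big(2(\sqrt{3} + 1)\big)^{1/3}\Big) = \frac{1 + (2(\sqrt{3} + 1))^{1/3}}{3},
\end{equation*}
completing the proof. I do not anticipate any genuine obstacle: everything reduces to combining the exponents of $2$ correctly, and the required class invariants are already in hand from the earlier corollary. The only point that demands a moment's care is the bookkeeping of the fractional powers of $2$ when simplifying $\sqrt{2}\,G_9$, since a slip there would disguise the clean form $\big(2(\sqrt{3}+1)\big)^{1/3}$.
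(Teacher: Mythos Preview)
Your proof is correct and follows exactly the same approach as the paper: apply Theorem~\ref{thm:9n} with $n=1$, substitute $G_1=1$ and $G_9$ from \eqref{G9}, and simplify $\sqrt{2}\,G_9 = (2(\sqrt{3}+1))^{1/3}$.
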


\begin{proof}
We apply Theorem~\ref{thm:9n} when $n = 1$. Using the value $G_1=1$ and the value of $G_9$ from \eqref{G9}, we find that
\begin{equation*}
\frac{\varphi(e^{-9\pi})}{\varphi(e^{-\pi})} = \frac{1}{3}\bigg(1 + \sqrt{2}\bigg(\frac{1 + \sqrt{3}}{\sqrt{2}}\bigg)^{1/3}\bigg) = \frac{1 + (2(\sqrt{3} + 1))^{1/3}}{3}.\tag*{\qedhere}
\end{equation*}
\end{proof}

Corollary~\ref{cor:9} can be found in Ramanujan's first notebook \cite[p.~287]{RamanujanEarlierI}, \cite[p.~328]{BerndtV}, and was first proved in \cite{BerndtChan}. Another representation for $\varphi(e^{-9\pi})/\varphi(e^{-\pi})$ can be obtained by using Theorem~\ref{thm:3n} with $n = 9$ and with the values $G_{9}$ from \eqref{G9} and $G_{81}$ from \eqref{G81}, combined with the value in Corollary~\ref{cor:3}. This value was also established in \cite[Theorem~4.2(ii)]{Yi2}, \cite[(6.2)]{BerndtRebak}.

\begin{corollary}\label{cor:9sqrt3} We have
\begin{equation*}
\frac{\varphi(e^{-9\pi\sqrt{3}})}{\varphi(e^{-\pi\sqrt{3}})} = \frac{1}{3}\bigg(1 + \bigg(\frac{2}{2^{1/3} - 1}\bigg)^{1/3}\bigg).
\end{equation*}
\end{corollary}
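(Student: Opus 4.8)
The plan is to obtain this evaluation as a direct special case of Theorem~\ref{thm:9n}, exactly in the manner that Corollaries~\ref{cor:3sqrt3} and~\ref{cor:9} are obtained. Setting $n=3$ in that theorem gives
\begin{equation*}
\frac{\varphi(e^{-9\pi\sqrt{3}})}{\varphi(e^{-\pi\sqrt{3}})} = \frac{1}{3}\bigg(1 + \frac{\sqrt{2}\,G_{27}}{G_3^3}\bigg),
\end{equation*}
so the entire problem reduces to evaluating and simplifying the single quotient $\sqrt{2}\,G_{27}/G_3^3$.

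First I would insert the value $G_3 = 2^{1/12}$ from \eqref{G3}, so that $G_3^3 = 2^{1/4}$, whence the prefactor $\sqrt{2}$ together with the denominator contributes $2^{1/2}/2^{1/4} = 2^{1/4}$. Next I would substitute the value of $G_{27}$ recorded in \eqref{G27}; since this class invariant carries a cube-root factor involving $2^{1/3}-1$ alongside a power of $2$, the surviving powers of $2$ should combine to produce exactly $2^{1/3}/(2^{1/3}-1)^{1/3}$, which is $\big(2/(2^{1/3}-1)\big)^{1/3}$. Substituting this back into the displayed formula then yields the asserted evaluation.

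The only step requiring any care is the bookkeeping of the fractional powers of $2$: I must track the exponents coming from $\sqrt{2}$, from $G_3^3 = 2^{1/4}$, and from the $2$-power embedded in $G_{27}$, and verify that they collapse to the single exponent $1/3$ inside the cube root while the factor $(2^{1/3}-1)^{-1/3}$ survives cleanly in the denominator. This is purely routine algebra once \eqref{G27} is in hand, so I anticipate no conceptual obstacle; the result then follows immediately by substitution into the formula of Theorem~\ref{thm:9n}.
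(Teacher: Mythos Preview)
Your proposal is correct and follows essentially the same argument as the paper: apply Theorem~\ref{thm:9n} with $n=3$, insert $G_3=2^{1/12}$ and $G_{27}=2^{1/12}(2^{1/3}-1)^{-1/3}$, and combine the exponents of $2$ to obtain $2^{1/2+1/12-1/4}=2^{1/3}$, yielding $\bigl(2/(2^{1/3}-1)\bigr)^{1/3}$. The only cosmetic difference is that in the paper the value \eqref{G27} of $G_{27}$ is first recorded within this very proof, so you would cite it from Ramanujan's table rather than as a pre-existing labeled equation.
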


\begin{proof}
We appeal to Theorem~\ref{thm:9n} with $n = 3$. Recall that $G_3$ is provided by \eqref{G3}. We also know that \cite[p.~190]{BerndtV}
\begin{equation}\label{G27}
G_{27} = 2^{1/12}(2^{1/3} - 1)^{-1/3}.
\end{equation}
Thus,
\begin{equation*}
\frac{\varphi(e^{-9\pi\sqrt{3}})}{\varphi(e^{-\pi\sqrt{3}})} = \frac{1}{3}\bigg(1 + \frac{\sqrt{2}\cdot2^{1/12}(2^{1/3} - 1)^{-1/3}}{2^{1/4}}\bigg) = \frac{1}{3}\bigg(1 + \bigg(\frac{2}{2^{1/3} - 1}\bigg)^{1/3}\bigg).\tag*{\qedhere}
\end{equation*}
\end{proof}

\begin{corollary}\label{cor:3sqrt5} We have
\begin{equation*}
\frac{\varphi(e^{-3\pi\sqrt{5}})}{\varphi(e^{-\pi\sqrt{5}})} = \frac{(1 + 2(\sqrt{3} + \sqrt{5}))^{1/4}}{\sqrt{3}}.
\end{equation*}
\end{corollary}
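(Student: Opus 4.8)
The plan is to specialize Theorem~\ref{thm:3n} to $n = 5$, which reduces the corollary to the evaluation of the single quantity $2\sqrt{2}\,G_{45}^3/G_5^9$. Both required class invariants are recorded in Ramanujan's list as reproduced in \cite[pp.~189--190]{BerndtV}: the well-known value $G_5 = \big(\tfrac{1+\sqrt{5}}{2}\big)^{1/4}$, together with the companion value $G_{45}$. First I would substitute these two values into the formula of Theorem~\ref{thm:3n}, so that
\begin{equation*}
\frac{\varphi(e^{-3\pi\sqrt{5}})}{\varphi(e^{-\pi\sqrt{5}})} = \frac{1}{\sqrt{3}}\bigg(1 + \frac{2\sqrt{2}\,G_{45}^3}{G_5^9}\bigg)^{1/4}.
\end{equation*}

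Next I would carry out the algebraic simplification, the objective being to show that $2\sqrt{2}\,G_{45}^3/G_5^9 = 2(\sqrt{3} + \sqrt{5})$. Writing $\phi = \tfrac{1+\sqrt{5}}{2}$, one has $G_5 = \phi^{1/4}$ and hence $G_5^9 = \phi^{9/4}$, so the task becomes verifying that $G_{45}^3 = \tfrac{1}{\sqrt{2}}(\sqrt{3}+\sqrt{5})\,\phi^{9/4}$; equivalently, since $(\sqrt{3}+\sqrt{5})^2/2 = 4 + \sqrt{15}$, that $G_{45}^6 = (4+\sqrt{15})\,\phi^{9/2}$. Substituting the recorded nested-radical form of $G_{45}$ and clearing the surds should confirm this identity, after which taking the fourth root and dividing by $\sqrt{3}$ delivers the stated evaluation.

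The main obstacle is precisely this surd manipulation: $G_{45}$ is a class invariant attached to a non-trivial field, so its tabulated value is a nested radical, and verifying $G_{45}^3 = \tfrac{1}{\sqrt{2}}(\sqrt{3}+\sqrt{5})\,\phi^{9/4}$ requires carefully combining the $\sqrt{3}$, $\sqrt{5}$, and $\sqrt{15}$ contributions. A clean way to organize the computation is to pass to the twelfth power $G_{45}^{12}$, a genuine unit in $\mathbb{Q}(\sqrt{5},\sqrt{15}) = \mathbb{Q}(\sqrt{3},\sqrt{5})$, and match it against $(4+\sqrt{15})^2\phi^9 = (31 + 8\sqrt{15})(38 + 17\sqrt{5})$; this postpones root extraction until the very end and reduces the verification to an identity between algebraic integers, which can then be checked directly.
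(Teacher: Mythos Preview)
Your approach is correct and matches the paper's: apply Theorem~\ref{thm:3n} with $n=5$ and insert the tabulated values of $G_5$ and $G_{45}$. The only remark is that you over-anticipate the difficulty of the surd manipulation. The recorded value is $G_{45}=(2+\sqrt{5})^{1/4}\bigl(\tfrac{\sqrt{3}+\sqrt{5}}{\sqrt{2}}\bigr)^{1/3}$ (see \cite[p.~191]{BerndtV}), and since $G_5=(2+\sqrt{5})^{1/12}$, cubing $G_{45}$ and dividing by $G_5^9=(2+\sqrt{5})^{3/4}$ gives $G_{45}^3/G_5^9=(\sqrt{3}+\sqrt{5})/\sqrt{2}$ immediately---no passage to twelfth powers or unit computations in $\mathbb{Q}(\sqrt{3},\sqrt{5})$ is needed.
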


Corollary~\ref{cor:3sqrt5} is given in~\cite[p.~151]{BorweinBrothersPiAGM}.

\begin{proof}
We apply Theorem~\ref{thm:3n} with $n = 5$. From \cite[p.~189]{BerndtV},
\begin{equation}\label{G5}
G_5 = \bigg(\frac{1 + \sqrt{5}}{2}\bigg)^{1/4} = (2 + \sqrt{5})^{1/12}
\end{equation}
and \cite[p.~191]{BerndtV}
\begin{equation}\label{G45}
G_{45} = (2 + \sqrt{5})^{1/4}\bigg(\frac{\sqrt{3} + \sqrt{5}}{\sqrt{2}}\bigg)^{1/3}.
\end{equation}
The desired result now follows after simplification.
\end{proof}

\begin{corollary}\label{cor:9sqrt5} We have
\begin{equation*}
\frac{\varphi(e^{-9\pi\sqrt{5}})}{\varphi(e^{-\pi\sqrt{5}})} = \frac{1 + (2(\sqrt{3} + \sqrt{5}))^{1/3}}{3}.
\end{equation*}
\end{corollary}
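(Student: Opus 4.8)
The plan is to apply Theorem~\ref{thm:9n} directly with $n = 5$, which reduces the problem to substituting the two class invariants $G_5$ and $G_{45}$ and then carrying out a short algebraic simplification. Theorem~\ref{thm:9n} gives
\begin{equation*}
\frac{\varphi(e^{-9\pi\sqrt{5}})}{\varphi(e^{-\pi\sqrt{5}})} = \frac{1}{3}\bigg(1 + \frac{\sqrt{2}\,G_{45}}{G_5^3}\bigg),
\end{equation*}
so the entire content of the corollary rests on evaluating the quotient $\sqrt{2}\,G_{45}/G_5^3$ and recognizing it as $(2(\sqrt{3}+\sqrt{5}))^{1/3}$.

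First I would record the two class invariants, both of which already appear in the excerpt: from \eqref{G5} we have $G_5 = (2+\sqrt{5})^{1/12}$, and from \eqref{G45} we have $G_{45} = (2+\sqrt{5})^{1/4}\big((\sqrt{3}+\sqrt{5})/\sqrt{2}\big)^{1/3}$. The crucial observation is that cubing $G_5$ yields $G_5^3 = (2+\sqrt{5})^{3/12} = (2+\sqrt{5})^{1/4}$, which is exactly the algebraic prefactor occurring in $G_{45}$. Hence the surd $(2+\sqrt{5})^{1/4}$ cancels completely, leaving
\begin{equation*}
\frac{\sqrt{2}\,G_{45}}{G_5^3} = \sqrt{2}\,\bigg(\frac{\sqrt{3}+\sqrt{5}}{\sqrt{2}}\bigg)^{1/3}.
\end{equation*}

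The final step is to collect the powers of $2$. Writing $\sqrt{2} = 2^{1/2}$ and $(1/\sqrt{2})^{1/3} = 2^{-1/6}$, the prefactor becomes $2^{1/2 - 1/6} = 2^{1/3}$, and absorbing this into the cube root produces $(2(\sqrt{3}+\sqrt{5}))^{1/3}$. Substituting back into the expression furnished by Theorem~\ref{thm:9n} then yields the stated evaluation. I do not anticipate any genuine obstacle here: the computation is routine, and the only point requiring a little care is matching the exponent $1/12$ in $G_5$ against the $1/4$ prefactor in $G_{45}$ so that the $(2+\sqrt{5})$ surds cancel cleanly. This cancellation is precisely the structural reason the answer collapses to such a compact closed form, and the argument parallels the simplification carried out for $n = 3$ in Corollary~\ref{cor:9sqrt3}.
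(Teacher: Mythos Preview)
Your proposal is correct and follows essentially the same approach as the paper's proof: both apply Theorem~\ref{thm:9n} with $n=5$ and substitute the class invariants $G_5$ from \eqref{G5} and $G_{45}$ from \eqref{G45}. You have simply written out in full the algebraic simplification that the paper leaves implicit, and your computations are accurate.
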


\begin{proof}
We apply Theorem~\ref{thm:9n} for $n = 5$. Using the values of $G_5$ and $G_{45}$ in \eqref{G5} and \eqref{G45}, respectively, we complete the proof as in Corollary~\ref{cor:3sqrt5}.
\end{proof}

\begin{corollary}\label{cor:3sqrt7} We have
\begin{equation*}
\frac{\varphi(e^{-3\pi\sqrt{7}})}{\varphi(e^{-\pi\sqrt{7}})} = \frac{1}{\sqrt{3}}\left(1 + \bigg(\frac{\sqrt{7} + \sqrt{3}}{2}\bigg)\left(\sqrt{\frac{5 + \sqrt{21}}{8}} + \sqrt{\frac{\sqrt{21} - 3}{8}}\right)^3\right)^{1/4}.
\end{equation*}
\end{corollary}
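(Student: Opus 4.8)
The plan is to imitate the proof of Corollary~\ref{cor:3sqrt5} and apply Theorem~\ref{thm:3n} with $n = 7$. This reduces everything to the two class invariants $G_7$ and $G_{63}$, after which the only remaining task is to simplify the quantity $2\sqrt{2}\,G_{63}^{3}/G_{7}^{9}$ that sits under the fourth root.

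First I would recall Ramanujan's value $G_7 = 2^{1/4}$ \cite[p.~189]{BerndtV}, so that $G_7^9 = 2^{9/4}$ and the numerical prefactor telescopes to $2\sqrt{2}/G_7^9 = 2^{3/2}/2^{9/4} = 2^{-3/4}$. Next I would invoke the tabulated value of $G_{63}$ \cite[p.~191]{BerndtV}. The key structural point is that $G_{63}$ can be written in the form
\[
G_{63} = 2^{-1/12}(\sqrt{7}+\sqrt{3})^{1/3}\left(\sqrt{\tfrac{5+\sqrt{21}}{8}}+\sqrt{\tfrac{\sqrt{21}-3}{8}}\right),
\]
so that cubing clears the outer cube root and leaves the inner binomial of nested radicals raised to the third power.

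Combining these, I would compute
\[
\frac{2\sqrt{2}\,G_{63}^{3}}{G_{7}^{9}} = 2^{-3/4}\cdot 2^{-1/4}(\sqrt{7}+\sqrt{3})\left(\sqrt{\tfrac{5+\sqrt{21}}{8}}+\sqrt{\tfrac{\sqrt{21}-3}{8}}\right)^{3} = \frac{\sqrt{7}+\sqrt{3}}{2}\left(\sqrt{\tfrac{5+\sqrt{21}}{8}}+\sqrt{\tfrac{\sqrt{21}-3}{8}}\right)^{3},
\]
since $2^{-3/4}\cdot 2^{-1/4} = \tfrac12$. This is precisely the expression inside the braces of the statement, and substituting it into Theorem~\ref{thm:3n} yields the claimed evaluation.

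The main obstacle is not the application of Theorem~\ref{thm:3n}, which is routine, but reconciling the recorded form of $G_{63}$ with the nested-radical expression above: depending on how $G_{63}$ is presented in the source, one may first have to denest or otherwise rewrite it so that the factor $\sqrt{(5+\sqrt{21})/8}+\sqrt{(\sqrt{21}-3)/8}$ appears to the first power and the remaining factor is manifestly a cube. To pin the radical down unambiguously I would check that the square of that binomial equals $\tfrac{1+\sqrt{21}}{4}+\tfrac{1}{2\sqrt{2}}\sqrt{3+\sqrt{21}}$, using the identity $(5+\sqrt{21})(\sqrt{21}-3)=6+2\sqrt{21}$; this fixes the sign, the power of $2$, and the cube-root factor, thereby confirming the displayed form of $G_{63}$ and completing the simplification.
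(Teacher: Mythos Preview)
Your proposal is correct and follows essentially the same approach as the paper: apply Theorem~\ref{thm:3n} with $n=7$, insert $G_7=2^{1/4}$ and the tabulated $G_{63}$, and simplify. The paper records $G_{63}$ in the equivalent form $2^{1/4}\big(\tfrac{5+\sqrt{21}}{2}\big)^{1/6}\big(\sqrt{\tfrac{5+\sqrt{21}}{8}}+\sqrt{\tfrac{\sqrt{21}-3}{8}}\big)$ (from \cite[p.~192]{BerndtV}, not p.~191) and then notes $\big(\tfrac{5+\sqrt{21}}{2}\big)^{1/6}=\big(\tfrac{\sqrt{7}+\sqrt{3}}{2}\big)^{1/3}$, which is exactly the rewriting you built into your displayed form of $G_{63}$.
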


\begin{proof}
Apply Theorem~\ref{thm:3n} for $n = 7$. From \cite[p.~189]{BerndtV},
\begin{equation}\label{G7}
G_7 = 2^{1/4},
\end{equation}
and from \cite[p.~192]{BerndtV},
\begin{equation*}
G_{63} = 2^{1/4}\bigg(\frac{5 + \sqrt{21}}{2}\bigg)^{1/6}\left(\sqrt{\frac{5 + \sqrt{21}}{8}} + \sqrt{\frac{\sqrt{21} - 3}{8}}\right).
\end{equation*}
Note that
\begin{equation*}
\bigg(\frac{5 + \sqrt{21}}{2}\bigg)^{1/6} = \bigg(\frac{\sqrt{7} + \sqrt{3}}{2}\bigg)^{1/3}.
\end{equation*}
The completion of the proof now follows.
\end{proof}

\begin{corollary}\label{cor:9sqrt7} We have
\begin{equation*}
\frac{\varphi(e^{-9\pi\sqrt{7}})}{\varphi(e^{-\pi\sqrt{7}})} = \frac{1}{3}\left(1 + \bigg(\frac{\sqrt{7} + \sqrt{3}}{2}\bigg)^{1/3}\left(\sqrt{\frac{5 + \sqrt{21}}{8}} + \sqrt{\frac{\sqrt{21} - 3}{8}}\right)\right).
\end{equation*}
\end{corollary}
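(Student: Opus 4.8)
The plan is to specialize Theorem~\ref{thm:9n} to $n = 7$, so that the evaluation reduces entirely to substituting known class invariants and carrying out an algebraic simplification. With $n = 7$, Theorem~\ref{thm:9n} asserts
\begin{equation*}
\frac{\varphi(e^{-9\pi\sqrt{7}})}{\varphi(e^{-\pi\sqrt{7}})} = \frac{1}{3}\bigg(1 + \frac{\sqrt{2}\,G_{63}}{G_7^3}\bigg),
\end{equation*}
so the only data I need are the two invariants already recorded in the proof of Corollary~\ref{cor:3sqrt7}: the value $G_7 = 2^{1/4}$ from \eqref{G7}, and the expression for $G_{63}$ taken from \cite[p.~192]{BerndtV}.

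First I would compute $G_7^3 = 2^{3/4}$ and then $\sqrt{2}\,G_{63}$. Multiplying the stated value of $G_{63}$ by $\sqrt{2} = 2^{1/2}$ produces the leading factor $2^{1/2}\cdot 2^{1/4} = 2^{3/4}$, which matches $G_7^3$ exactly. Hence the powers of $2$ cancel in the quotient, leaving
\begin{equation*}
\frac{\sqrt{2}\,G_{63}}{G_7^3} = \bigg(\frac{5 + \sqrt{21}}{2}\bigg)^{1/6}\left(\sqrt{\frac{5 + \sqrt{21}}{8}} + \sqrt{\frac{\sqrt{21} - 3}{8}}\right).
\end{equation*}

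The final step is to rewrite the sextic radical using the identity $\big(\tfrac{5+\sqrt{21}}{2}\big)^{1/6} = \big(\tfrac{\sqrt{7}+\sqrt{3}}{2}\big)^{1/3}$, already noted in the proof of Corollary~\ref{cor:3sqrt7}; it holds because $\big(\tfrac{\sqrt{7}+\sqrt{3}}{2}\big)^2 = \tfrac{10+2\sqrt{21}}{4} = \tfrac{5+\sqrt{21}}{2}$, so taking sixth roots gives the claim. Substituting this into the displayed quotient and then into Theorem~\ref{thm:9n} yields precisely the asserted evaluation. I do not anticipate any genuine obstacle: the argument is a routine specialization running parallel to Corollary~\ref{cor:3sqrt7}, the only structural difference being that Theorem~\ref{thm:9n} involves the combination $\sqrt{2}\,G_{9n}/G_n^3$ rather than the fourth-power expression of Theorem~\ref{thm:3n}, so here the cube-root factor of $G_{63}$ survives intact rather than being absorbed into a fourth root.
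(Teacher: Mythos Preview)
Your proposal is correct and follows exactly the same approach as the paper: specialize Theorem~\ref{thm:9n} to $n=7$, insert the values of $G_7$ and $G_{63}$ already used in the proof of Corollary~\ref{cor:3sqrt7}, and simplify via the identity $\big(\tfrac{5+\sqrt{21}}{2}\big)^{1/6}=\big(\tfrac{\sqrt{7}+\sqrt{3}}{2}\big)^{1/3}$. The paper's proof is in fact just a one-line pointer back to Corollary~\ref{cor:3sqrt7}, so your write-up is more explicit but substantively identical.
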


\begin{proof}
We apply Theorem~\ref{thm:9n} for $n = 7$. The remainder of the proof is similar to the proof of Corollary~\ref{cor:3sqrt7}.
\end{proof}

\begin{corollary}\label{cor:27} We have
\begin{equation*}
\frac{\varphi(e^{-27\pi})}{\varphi(e^{-\pi})} = \frac{1}{3(6\sqrt{3} - 9)^{1/4}}\left(1 + (\sqrt{3} - 1)\left(\frac{(2(\sqrt{3} + 1))^{1/3} + 1}{(2(\sqrt{3} - 1))^{1/3} - 1}\right)^{1/3}\right).
\end{equation*}
\end{corollary}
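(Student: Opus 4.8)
The plan is to apply Theorem~\ref{thm:9n} with $n = 9$ and then chain the result with Corollary~\ref{cor:3}. Since $9\pi\sqrt{9} = 27\pi$ and $\pi\sqrt{9} = 3\pi$, setting $n = 9$ in Theorem~\ref{thm:9n} gives
\begin{equation*}
\frac{\varphi(e^{-27\pi})}{\varphi(e^{-3\pi})} = \frac{1}{3}\bigg(1 + \frac{\sqrt{2}\,G_{81}}{G_9^3}\bigg).
\end{equation*}
To pass from a denominator of $\varphi(e^{-3\pi})$ to one of $\varphi(e^{-\pi})$, I would multiply both sides by $\varphi(e^{-3\pi})/\varphi(e^{-\pi}) = (6\sqrt{3} - 9)^{-1/4}$, the value furnished by Corollary~\ref{cor:3}. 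The left side telescopes to the desired quotient, and the constant $\tfrac{1}{3}$ combines with $(6\sqrt{3}-9)^{-1/4}$ to produce the prefactor $1/(3(6\sqrt{3}-9)^{1/4})$.

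It then remains to insert the class invariants. From \eqref{G9} we have $G_9^3 = (1+\sqrt{3})/\sqrt{2}$, so after rationalizing the denominator,
\begin{equation*}
\frac{\sqrt{2}\,G_{81}}{G_9^3} = \frac{2\,G_{81}}{1+\sqrt{3}} = (\sqrt{3} - 1)\,G_{81}.
\end{equation*}
Substituting the value of $G_{81}$ from \eqref{G81} into $1 + (\sqrt{3}-1)G_{81}$ should reproduce the bracketed factor $1 + (\sqrt{3} - 1)(\cdots)^{1/3}$ exactly as stated.

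I expect no analytic difficulty: the entire argument rests on Theorem~\ref{thm:9n}, Corollary~\ref{cor:3}, and the tabulated invariants \eqref{G9} and \eqref{G81}. The only genuine labor is the algebraic bookkeeping—verifying $2/(1+\sqrt{3}) = \sqrt{3} - 1$ and checking that the cube-root expression carried by $G_{81}$ matches the displayed radical. Thus the main (and modest) obstacle is simply having the correct closed form for $G_{81}$ on hand and carrying the simplification through cleanly.
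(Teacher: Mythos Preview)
Your proposal is correct and follows exactly the paper's own proof: invoke Theorem~\ref{thm:9n} with $n=9$, substitute the invariants $G_9$ and $G_{81}$ from \eqref{G9} and \eqref{G81}, and combine with Corollary~\ref{cor:3}. Your explicit simplification $\sqrt{2}/G_9^3 = \sqrt{3}-1$ is in fact more detailed than what the paper records.
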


Corollary~\ref{cor:27} can be found in \cite{BerndtChan}.

\begin{proof}
Invoke Theorem~\ref{thm:9n} with $n = 9$. We know that
 \cite[p.~193]{BerndtV}
\begin{equation}\label{G81}
G_{81} = \left(\frac{(2(\sqrt{3} + 1))^{1/3} + 1}{(2(\sqrt{3} - 1))^{1/3} - 1}\right)^{1/3}.
\end{equation} Using \eqref{G81} and \eqref{G9} with Corollary~\ref{cor:3}, we complete the proof.
\end{proof}

\begin{corollary}\label{cor:3sqrt13} We have
\begin{align*}
\frac{\varphi(e^{-3\pi\sqrt{13}})}{\varphi(e^{-\pi\sqrt{13}})} &= \frac{1}{\sqrt{3}}\Bigg(1 + 2\sqrt{2} \bigg(\frac{\sqrt{13} - 3}{2}\bigg)^{3/2}(2\sqrt{3} + \sqrt{13})^{1/2}\\
&\qquad\qquad\times\bigg(\sqrt{208 + 120\sqrt{3}} + \sqrt{207 + 120\sqrt{3}}\bigg)^{1/2}\Bigg)^{1/4}.
\end{align*}
\end{corollary}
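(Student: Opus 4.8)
The plan is to apply Theorem~\ref{thm:3n} with $n=13$, exactly as in the preceding corollaries. The theorem reduces the evaluation of the ratio $\varphi(e^{-3\pi\sqrt{13}})/\varphi(e^{-\pi\sqrt{13}})$ to knowledge of the two class invariants $G_{13}$ and $G_{9\cdot 13}=G_{117}$, via
\begin{equation*}
\frac{\varphi(e^{-3\pi\sqrt{13}})}{\varphi(e^{-\pi\sqrt{13}})} = \frac{1}{\sqrt{3}}\bigg(1 + \frac{2\sqrt{2}\,G_{117}^3}{G_{13}^9}\bigg)^{1/4}.
\end{equation*}
So the entire content of the corollary lies in substituting the correct values of $G_{13}$ and $G_{117}$ and then simplifying the resulting algebraic expression into the stated closed form.

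First I would record the two class invariants from Ramanujan's list in \cite{BerndtV}. The value $G_{13}=\big((3+\sqrt{13})/2\big)^{1/4}$ is standard, while $G_{117}$ is one of the more elaborate entries, and I expect it to have the shape of a product of a power of $G_{13}$ (or of $(\sqrt{13}-3)/2$, its reciprocal-type companion) with a cubic surd factor involving $2\sqrt{3}+\sqrt{13}$ and a nested radical $\sqrt{208+120\sqrt{3}}+\sqrt{207+120\sqrt{3}}$. The appearance of these exact radicals in the target formula strongly signals that $G_{117}^3/G_{13}^9$ collapses precisely to
\begin{equation*}
\bigg(\frac{\sqrt{13}-3}{2}\bigg)^{3/2}(2\sqrt{3}+\sqrt{13})^{1/2}\bigg(\sqrt{208+120\sqrt{3}}+\sqrt{207+120\sqrt{3}}\bigg)^{1/2},
\end{equation*}
so that multiplying by $2\sqrt{2}$ and adding $1$ reproduces the bracketed quantity verbatim.

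The key steps, in order, are therefore: (1) cite $G_{13}$ and $G_{117}$ from \cite[pp.~189--199]{BerndtV}; (2) form the quotient $G_{117}^3/G_{13}^9$; (3) simplify, using the identity $G_{13}^{-1}=\big((\sqrt{13}-3)/2\big)^{1/4}$ which follows from $G_{13}^4=(3+\sqrt{13})/2$ together with the relation $\big((3+\sqrt{13})/2\big)\big((\sqrt{13}-3)/2\big)=(13-9)/4=1$; and (4) combine with the factor $2\sqrt{2}$, add $1$, and take the fourth root after dividing by $\sqrt{3}$.

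The main obstacle will be step~(3): verifying that the powers and nested radicals in $G_{117}$ combine to give exactly the stated half-integer powers $3/2$, $1/2$, and $1/2$ on the three factors. This is a routine but delicate algebraic simplification, requiring care with the exponents (since $G_{117}$ is raised to the third power against $G_{13}^{9}$) and with rationalizing the nested radical so that its cube-surd structure survives cleanly. I do not anticipate any conceptual difficulty—only the bookkeeping of surds—since the form of the answer has clearly been reverse-engineered to match the tabulated value of $G_{117}$, exactly as in the parallel Corollaries~\ref{cor:3sqrt5} and~\ref{cor:3sqrt7}.
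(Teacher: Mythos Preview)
Your proposal is correct and follows essentially the same route as the paper: apply Theorem~\ref{thm:3n} with $n=13$, insert the tabulated values $G_{13}=\big((3+\sqrt{13})/2\big)^{1/4}=\big((\sqrt{13}-3)/2\big)^{-1/4}$ and $G_{117}=\big((3+\sqrt{13})/2\big)^{1/4}(2\sqrt{3}+\sqrt{13})^{1/6}\big((3^{1/4}+\sqrt{4+\sqrt{3}})/2\big)$ from \cite{BerndtV}, and simplify. The one concrete identity you should anticipate in step~(3) is
\[
\left(\frac{3^{1/4}+\sqrt{4+\sqrt{3}}}{2}\right)^{3}=\Big(\sqrt{208+120\sqrt{3}}+\sqrt{207+120\sqrt{3}}\Big)^{1/2},
\]
which is exactly what converts $G_{117}^3$ into the nested radical appearing in the statement.
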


\begin{proof}
Employ Theorem~\ref{thm:3n} when $n = 13$. From \cite[p.~190]{BerndtV},
\begin{equation}\label{G13}
G_{13} = \bigg(\frac{\sqrt{13} + 3}{2}\bigg)^{1/4} = \bigg(\frac{\sqrt{13} - 3}{2}\bigg)^{-1/4},
\end{equation}
and from \cite[p.~193]{BerndtV},
\begin{equation*}
G_{117} = \bigg(\frac{\sqrt{13} + 3}{2}\bigg)^{1/4}(2\sqrt{3} + \sqrt{13})^{1/6}\left(\frac{3^{1/4} + \sqrt{4 + \sqrt{3}}}{2}\right).
\end{equation*}
Using the identity,
\begin{equation*}
\left(\frac{3^{1/4} + \sqrt{4 + \sqrt{3}}}{2}\right)^3 = \bigg(\sqrt{208 + 120\sqrt{3}} + \sqrt{207 + 120\sqrt{3}}\bigg)^{1/2},
\end{equation*}
we can complete the proof.
\end{proof}

\begin{corollary}\label{cor:9sqrt13} We have
\begin{equation*}
\frac{\varphi(e^{-9\pi\sqrt{13}})}{\varphi(e^{-\pi\sqrt{13}})} = \frac{1}{3}\left(1 + \sqrt{2}\bigg(\frac{\sqrt{13} - 3}{2}\bigg)^{1/2}(2\sqrt{3} + \sqrt{13})^{1/6}\left(\frac{3^{1/4} + \sqrt{4 + \sqrt{3}}}{2}\right)\right).
\end{equation*}
\end{corollary}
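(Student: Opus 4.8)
The plan is to apply Theorem~\ref{thm:9n} directly with $n = 13$, exactly paralleling the proof of Corollary~\ref{cor:3sqrt13} but with the ninth-power analogue in place of the cubic one. The required class invariants $G_{13}$ and $G_{117}$ are already in hand: $G_{13}$ is recorded in~\eqref{G13}, and the value
\begin{equation*}
G_{117} = \bigg(\frac{\sqrt{13} + 3}{2}\bigg)^{1/4}(2\sqrt{3} + \sqrt{13})^{1/6}\left(\frac{3^{1/4} + \sqrt{4 + \sqrt{3}}}{2}\right)
\end{equation*}
was quoted from~\cite[p.~193]{BerndtV} in the proof of Corollary~\ref{cor:3sqrt13}. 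So no new invariant computation is needed.

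The single substantive step is to evaluate the quotient $\sqrt{2}\,G_{117}/G_{13}^3$ appearing in Theorem~\ref{thm:9n}. First I would compute $G_{13}^3 = \big(\tfrac{\sqrt{13}+3}{2}\big)^{3/4}$ and form
\begin{equation*}
\frac{G_{117}}{G_{13}^3} = \bigg(\frac{\sqrt{13}+3}{2}\bigg)^{-1/2}(2\sqrt{3}+\sqrt{13})^{1/6}\left(\frac{3^{1/4}+\sqrt{4+\sqrt{3}}}{2}\right),
\end{equation*}
combining the exponents $\tfrac14 - \tfrac34 = -\tfrac12$ on the factor $\tfrac{\sqrt{13}+3}{2}$. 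The only thing to watch is the conjugate simplification already flagged in~\eqref{G13}: since $\tfrac{\sqrt{13}+3}{2}\cdot\tfrac{\sqrt{13}-3}{2} = \tfrac{13-9}{4} = 1$, we may rewrite $\big(\tfrac{\sqrt{13}+3}{2}\big)^{-1/2} = \big(\tfrac{\sqrt{13}-3}{2}\big)^{1/2}$, which converts the quotient into precisely the form demanded by the statement.

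Multiplying through by $\sqrt{2}$ and inserting into the bracket $\tfrac13\big(1 + \sqrt{2}\,G_{117}/G_{13}^3\big)$ then yields the claimed identity. There is no genuine obstacle here: unlike Corollary~\ref{cor:3sqrt13}, where one had to recognize a cube of $\big(\tfrac{3^{1/4}+\sqrt{4+\sqrt3}}{2}\big)$ and match it to a nested radical via an auxiliary algebraic identity, the ninth-power case leaves this factor to the first power, so the calculation is strictly shorter and involves only the routine exponent arithmetic and the reciprocal relation described above. Accordingly, I would present the proof in a single short display, mirroring the closing remark of Corollary~\ref{cor:9sqrt7} that the argument is similar to its cubic companion.

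\begin{proof}
We apply Theorem~\ref{thm:9n} for $n = 13$. Using the value of $G_{13}$ from \eqref{G13} and the value of $G_{117}$ recorded in the proof of Corollary~\ref{cor:3sqrt13}, we complete the proof as in Corollary~\ref{cor:3sqrt13}.
\end{proof}
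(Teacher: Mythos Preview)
Your proposal is correct and matches the paper's own proof, which simply reads: ``Apply Theorem~\ref{thm:9n} for $n = 13$. The remainder of the proof is analogous to the proof of Corollary~\ref{cor:3sqrt13}.'' Your explicit exponent bookkeeping and use of the reciprocal relation from~\eqref{G13} make the omitted details transparent.
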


\begin{proof}
Apply Theorem~\ref{thm:9n} for $n = 13$. The remainder of the proof is analogous to the proof of Corollary~\ref{cor:3sqrt13}.
\end{proof}

\begin{corollary}\label{cor:3sqrt17} We have
\begin{align*}
\frac{\varphi(e^{-3\pi\sqrt{17}})}{\varphi(e^{-\pi\sqrt{17}})} &= \frac{1}{\sqrt{3}}\left(1 + 2\sqrt{2}\left(\sqrt{\frac{5 + \sqrt{17}}{8}} - \sqrt{\frac{\sqrt{17} - 3}{8}}\right)^3\right.\\
&\qquad\qquad\times\left.\left(\sqrt{\frac{37 + 9\sqrt{17}}{4}} + \sqrt{\frac{33 + 9\sqrt{17}}{4}}\right)\vphantom{\left(\sqrt{\frac{5 + \sqrt{17}}{8}} - \sqrt{\frac{\sqrt{17} - 3}{8}}\right)^3}\right)^{1/4}.
\end{align*}
\end{corollary}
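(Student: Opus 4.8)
The plan is to follow the template of Corollaries~\ref{cor:3sqrt7} and~\ref{cor:3sqrt13}: apply Theorem~\ref{thm:3n} with $n=17$, insert the recorded values of the class invariants $G_{17}$ and $G_{153}$, and reduce the quotient $G_{153}^3/G_{17}^9$ to the displayed product of nested radicals. By Theorem~\ref{thm:3n},
\begin{equation*}
\frac{\varphi(e^{-3\pi\sqrt{17}})}{\varphi(e^{-\pi\sqrt{17}})} = \frac{1}{\sqrt{3}}\bigg(1 + \frac{2\sqrt{2}\,G_{153}^3}{G_{17}^9}\bigg)^{1/4},
\end{equation*}
so the whole task is to evaluate $G_{153}^3/G_{17}^9$ in the stated form.

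First I would record $G_{17}$ from Ramanujan's list \cite[p.~190]{BerndtV}, namely
\begin{equation*}
G_{17} = \sqrt{\tfrac{5+\sqrt{17}}{8}} + \sqrt{\tfrac{\sqrt{17}-3}{8}}.
\end{equation*}
Because $\tfrac{5+\sqrt{17}}{8} - \tfrac{\sqrt{17}-3}{8} = 1$, the two summands are conjugate with product one, so $G_{17}^{-1} = \sqrt{\tfrac{5+\sqrt{17}}{8}} - \sqrt{\tfrac{\sqrt{17}-3}{8}}$. Cubing identifies the first nested-radical factor of the corollary as exactly $G_{17}^{-3}$, and it remains to match the complementary quotient with the second factor, that is, to show
\begin{equation*}
\frac{G_{153}^3}{G_{17}^6} = \sqrt{\tfrac{37+9\sqrt{17}}{4}} + \sqrt{\tfrac{33+9\sqrt{17}}{4}}.
\end{equation*}

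For this I would substitute the value of $G_{153}$ from \cite{BerndtV}. As in the evaluation of $G_{117}$ used for Corollary~\ref{cor:3sqrt13}, $G_{153}$ is recorded as a power of $G_{17}$ times a sixth-root factor times a quadratic ``fundamental-unit'' factor, so that forming $G_{153}^3/G_{17}^6$ collapses the sixth root and leaves the sum of two square roots above. The main obstacle is exactly this last step: verifying the nested-radical identity that rewrites the cube of the recorded unit factor as $\sqrt{\tfrac{37+9\sqrt{17}}{4}} + \sqrt{\tfrac{33+9\sqrt{17}}{4}}$, the precise analogue of the identity $\big(\tfrac{3^{1/4}+\sqrt{4+\sqrt{3}}}{2}\big)^3 = \big(\sqrt{208+120\sqrt{3}} + \sqrt{207+120\sqrt{3}}\big)^{1/2}$ invoked for Corollary~\ref{cor:3sqrt13}. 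A convenient guide and built-in check is the relation $\tfrac{37+9\sqrt{17}}{4} - \tfrac{33+9\sqrt{17}}{4} = 1$: it shows that $\sqrt{\tfrac{37+9\sqrt{17}}{4}} + \sqrt{\tfrac{33+9\sqrt{17}}{4}}$ and $\sqrt{\tfrac{37+9\sqrt{17}}{4}} - \sqrt{\tfrac{33+9\sqrt{17}}{4}}$ are reciprocal, so the target is a unit whose value alone determines, and confirms, the algebra.
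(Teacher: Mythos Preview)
Your approach is essentially the paper's: apply Theorem~\ref{thm:3n} at $n=17$, use the reciprocal form of $G_{17}$, and substitute $G_{153}$ from \cite{BerndtV}. One small correction to your expectations: the recorded value \cite[p.~194]{BerndtV} is
\[
G_{153} = \left(\sqrt{\tfrac{5+\sqrt{17}}{8}} + \sqrt{\tfrac{\sqrt{17}-3}{8}}\right)^{2}\left(\sqrt{\tfrac{37+9\sqrt{17}}{4}} + \sqrt{\tfrac{33+9\sqrt{17}}{4}}\right)^{1/3},
\]
i.e.\ $G_{17}^2$ times a \emph{cube} root of the target sum, not the $G_{117}$-style ``sixth-root times unit'' shape. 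Hence $G_{153}^3/G_{17}^6$ equals $\sqrt{\tfrac{37+9\sqrt{17}}{4}} + \sqrt{\tfrac{33+9\sqrt{17}}{4}}$ immediately, and the nested-radical identity you flagged as the main obstacle is not needed here.
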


\begin{proof}
Invoke Theorem~\ref{thm:3n} for $n = 17$. Now, from \cite[p.~190]{BerndtV},
\begin{equation*}
G_{17} = \sqrt{\frac{5 + \sqrt{17}}{8}} + \sqrt{\frac{\sqrt{17} - 3}{8}} = \left(\sqrt{\frac{5 + \sqrt{17}}{8}} - \sqrt{\frac{\sqrt{17} - 3}{8}}\right)^{-1},
\end{equation*}
and from \cite[p.~194]{BerndtV},
\begin{equation*}
G_{153} = \left(\sqrt{\frac{5 + \sqrt{17}}{8}} + \sqrt{\frac{\sqrt{17} - 3}{8}}\right)^2\left(\sqrt{\frac{37 + 9\sqrt{17}}{4}} + \sqrt{\frac{33 + 9\sqrt{17}}{4}}\right)^{1/3}.
\end{equation*}
The proof is now straightforwardly completed.
\end{proof}

\begin{corollary}\label{cor:9sqrt17} We have
\begin{align*}
\frac{\varphi(e^{-9\pi\sqrt{17}})}{\varphi(e^{-\pi\sqrt{17}})} &= \frac{1}{3}\left(1 + \sqrt{2}\left(\sqrt{\frac{5 + \sqrt{17}}{8}} - \sqrt{\frac{\sqrt{17} - 3}{8}}\right)\right.\\
&\qquad\qquad\times\left.\left(\sqrt{\frac{37 + 9\sqrt{17}}{4}} + \sqrt{\frac{33 + 9\sqrt{17}}{4}}\right)^{1/3}\right).
\end{align*}
\end{corollary}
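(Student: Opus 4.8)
The plan is to specialize Theorem~\ref{thm:9n} to $n=17$, in complete parallel with the preceding cubic corollaries. The formula there reads $\frac{\varphi(e^{-9\pi\sqrt{17}})}{\varphi(e^{-\pi\sqrt{17}})}=\tfrac13\bigl(1+\sqrt2\,G_{153}/G_{17}^3\bigr)$, so the only ingredients required are the two class invariants $G_{17}$ and $G_{9\cdot17}=G_{153}$. Both of these have already been recorded in the proof of Corollary~\ref{cor:3sqrt17}, so no new class-invariant evaluation is needed; the entire task reduces to simplifying the quotient $\sqrt2\,G_{153}/G_{17}^3$ into the form displayed in the statement.

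The key observation is that the recorded value of $G_{153}$ carries exactly the factor $G_{17}^2=\bigl(\sqrt{\tfrac{5+\sqrt{17}}{8}}+\sqrt{\tfrac{\sqrt{17}-3}{8}}\bigr)^2$. Dividing by $G_{17}^3$ cancels this square against two of the three factors of $G_{17}$ in the denominator, leaving precisely $G_{17}^{-1}$ multiplied by the surviving cube-root factor $\bigl(\sqrt{\tfrac{37+9\sqrt{17}}{4}}+\sqrt{\tfrac{33+9\sqrt{17}}{4}}\bigr)^{1/3}$. At this point I would invoke the explicit reciprocal form recorded alongside $G_{17}$, namely $G_{17}^{-1}=\sqrt{\tfrac{5+\sqrt{17}}{8}}-\sqrt{\tfrac{\sqrt{17}-3}{8}}$, which converts the leftover $G_{17}^{-1}$ directly into the difference of radicals appearing in the claim. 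Reinstating the factor $\sqrt2$ from the theorem then yields the asserted closed form, exactly as the analogous manipulation produced Corollary~\ref{cor:9sqrt7} from Theorem~\ref{thm:9n}.

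I do not anticipate a substantive obstacle, as this is a direct specialization of a general theorem whose proof is already in hand. The only point demanding care is the exponent bookkeeping: one must confirm that $G_{153}$ contributes the square (and not some other power) of the $G_{17}$-type radical, so that the cancellation against $G_{17}^3$ lands on exactly the first inverse power, and that it is the reciprocal rather than the direct radical form of $G_{17}$ that survives this cancellation. A quick numerical evaluation of both sides would confirm the algebra before the closed form is committed to print.
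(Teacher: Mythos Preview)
Your proposal is correct and follows exactly the same approach as the paper: apply Theorem~\ref{thm:9n} with $n=17$ and use the values of $G_{17}$ and $G_{153}$ already recorded in the proof of Corollary~\ref{cor:3sqrt17}. The paper's proof is in fact more terse than yours, simply noting that the argument is analogous to that of Corollary~\ref{cor:3sqrt17}.
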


\begin{proof}
Apply Theorem~\ref{thm:9n} for $n = 17$. The remainder of the proof is similar to that of Corollary~\ref{cor:3sqrt17}.
\end{proof}

\begin{corollary}\label{cor:3:15} We have
\begin{align*}
\frac{\varphi(e^{-15\pi})}{\varphi(e^{-\pi})} &= \frac{1}{\sqrt{3}(5\sqrt{5} - 10)^{1/2}}\left(1 + 2\sqrt{2}(\sqrt{5} - 2)^2(2 + \sqrt{3})\vphantom{\bigg(\sqrt{4275 + 1104\sqrt{15}} + \sqrt{4276 + 1104\sqrt{15}}\bigg)^{1/2}}\right.\\
&\qquad\qquad\times\left.\bigg(\sqrt{4276 + 1104\sqrt{15}} + \sqrt{4275 + 1104\sqrt{15}}\bigg)^{1/2}\right)^{1/4}.
\end{align*}
\end{corollary}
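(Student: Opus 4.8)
The plan is to split the degree-fifteen ratio according to the factorization $15 = 3\cdot 5$, writing
\begin{equation*}
\frac{\varphi(e^{-15\pi})}{\varphi(e^{-\pi})} = \frac{\varphi(e^{-15\pi})}{\varphi(e^{-5\pi})}\cdot\frac{\varphi(e^{-5\pi})}{\varphi(e^{-\pi})}.
\end{equation*}
The first factor is exactly the ratio delivered by Theorem~\ref{thm:3n} with $n = 25$, since then $3\pi\sqrt{n} = 15\pi$ and $\pi\sqrt{n} = 5\pi$. The second factor is a genuinely quintic quantity that the cubic theory of Theorem~\ref{thm:maincubic} cannot reach, so I would supply it from the classical evaluation $\varphi(e^{-5\pi})/\varphi(e^{-\pi}) = (5\sqrt{5} - 10)^{-1/2}$ (see, e.g., \cite{Yi}, \cite{BerndtV}); this is precisely the origin of the factor $(5\sqrt{5} - 10)^{1/2}$ in the denominator of the stated formula.

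Carrying out the first step, Theorem~\ref{thm:3n} with $n = 25$ gives
\begin{equation*}
\frac{\varphi(e^{-15\pi})}{\varphi(e^{-5\pi})} = \frac{1}{\sqrt{3}}\bigg(1 + \frac{2\sqrt{2}\,G_{225}^3}{G_{25}^9}\bigg)^{1/4},
\end{equation*}
so the calculation reduces to the two class invariants $G_{25}$ and $G_{225}$. For the first I would use $G_{25} = \tfrac12(1 + \sqrt{5}) = (2 + \sqrt{5})^{1/3}$, which is consistent with the value of $G_5$ in \eqref{G5} and appears in Ramanujan's list \cite{BerndtV}; for the second I would take the value of $G_{225}$ from the same list, which lies in a field containing $\sqrt{3}$, $\sqrt{5}$, and $\sqrt{15}$ and is the source of the nested radicals $\sqrt{4276 + 1104\sqrt{15}} \pm \sqrt{4275 + 1104\sqrt{15}}$ in the answer. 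Substituting these and then multiplying by the quintic factor $(5\sqrt{5} - 10)^{-1/2}$ should assemble the claimed expression.

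The main obstacle is entirely computational: simplifying $G_{225}^3/G_{25}^9$ into the compact product $(\sqrt{5} - 2)^2(2 + \sqrt{3})\big(\sqrt{4276 + 1104\sqrt{15}} + \sqrt{4275 + 1104\sqrt{15}}\big)^{1/2}$. Here $G_{25}^9 = (2 + \sqrt{5})^3$, and since $(\sqrt{5} - 2)(\sqrt{5} + 2) = 1$ the factor $(\sqrt{5} - 2)^2 = (2 + \sqrt{5})^{-2}$ absorbs two of these three powers, leaving the requirement that $G_{225}^3$ reduce to $(2 + \sqrt{5})(2 + \sqrt{3})\big(\sqrt{4276 + 1104\sqrt{15}} + \sqrt{4275 + 1104\sqrt{15}}\big)^{1/2}$. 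Verifying this surd identity is the delicate part; I expect it to proceed by the same type of consecutive-integer-under-the-radical manipulation used in Corollaries~\ref{cor:3sqrt13} and~\ref{cor:3sqrt17}, namely recognizing the bracketed expression as a controlled power of a simpler element of $\mathbb{Q}(\sqrt{3},\sqrt{5})$ coming from $G_{225}$. Once this reduction is confirmed, the prefactor $\tfrac{1}{\sqrt{3}}(5\sqrt{5} - 10)^{-1/2}$ falls out immediately and the proof is complete.
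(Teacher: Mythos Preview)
Your proposal is correct and follows the same route as the paper: apply Theorem~\ref{thm:3n} with $n=25$, insert $G_{25}$ and $G_{225}$, and then multiply by the known quintic evaluation $\varphi(e^{-5\pi})/\varphi(e^{-\pi})=(5\sqrt{5}-10)^{-1/2}$. The ``delicate'' surd reduction you anticipate is exactly the one the paper uses: with $G_{225}=\tfrac{1+\sqrt5}{2}\,(2+\sqrt3)^{1/3}\bigl(\tfrac{\sqrt{4+\sqrt{15}}+15^{1/4}}{2}\bigr)$, the key identity is $\bigl(\tfrac{\sqrt{4+\sqrt{15}}+15^{1/4}}{2}\bigr)^{3}=\bigl(\sqrt{4276+1104\sqrt{15}}+\sqrt{4275+1104\sqrt{15}}\bigr)^{1/2}$, which combined with $G_{25}^{-6}=(\sqrt5-2)^2$ gives precisely your required form of $G_{225}^3/G_{25}^9$.
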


\begin{proof}
Apply Theorem~\ref{thm:3n} with $n = 25$. Now, from \cite[p.~190]{BerndtV},
\begin{equation}\label{G25}
G_{25} = \frac{\sqrt{5} + 1}{2} = \bigg(\frac{\sqrt{5} - 1}{2}\bigg)^{-1},
\end{equation}
and from \cite[p.~195]{BerndtV},
\begin{equation*}
G_{225} = \bigg(\frac{\sqrt{5} + 1}{2}\bigg)(2 + \sqrt{3})^{1/3}\left(\frac{\sqrt{4 + \sqrt{15}} + (15)^{1/4}}{2}\right).
\end{equation*}
Note that
\begin{equation*}
\bigg(\frac{\sqrt{5} - 1}{2}\bigg)^6 = (\sqrt{5} - 2)^2
\end{equation*}
and
\begin{equation*}
\left(\frac{\sqrt{4 + \sqrt{15}} + (15)^{1/4}}{2}\right)^3 = \bigg(\sqrt{4276 + 1104\sqrt{15}} + \sqrt{4275 + 1104\sqrt{15}}\bigg)^{1/2}.
\end{equation*}
Using the evaluation \cite{BerndtChan}, \cite[p.~327]{BerndtV}
\begin{equation*}
\frac{\varphi(e^{-5\pi})}{\varphi(e^{-\pi})} = \frac{1}{(5\sqrt{5} - 10)^{1/2}},
\end{equation*}
which is also stated in Corollary~\ref{cor:5}, we complete the proof.
\end{proof}

\begin{corollary}\label{cor:45} We have
\begin{equation*}
\frac{\varphi(e^{-45\pi})}{\varphi(e^{-\pi})} = \frac{1}{3(5\sqrt{5} - 10)^{1/2}}\left(1 + \sqrt{2}\bigg(\frac{3 - \sqrt{5}}{2}\bigg)(2 + \sqrt{3})^{1/3}\left(\frac{\sqrt{4 + \sqrt{15}} + (15)^{1/4}}{2}\right)\right)\!.
\end{equation*}
\end{corollary}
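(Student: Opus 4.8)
The plan is to apply Theorem~\ref{thm:9n} with $n = 25$, exactly paralleling the structure of the preceding corollaries. Since $\sqrt{25} = 5$, this immediately yields a formula not for the target ratio but for
\begin{equation*}
\frac{\varphi(e^{-45\pi})}{\varphi(e^{-5\pi})} = \frac{1}{3}\bigg(1 + \frac{\sqrt{2}\,G_{225}}{G_{25}^3}\bigg),
\end{equation*}
so the final step will require chaining with the known value of $\varphi(e^{-5\pi})/\varphi(e^{-\pi})$.

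First I would substitute the two required class invariants, both of which already appear in the proof of Corollary~\ref{cor:3:15}: the value $G_{25} = (\sqrt{5}+1)/2$ from \eqref{G25}, and
\begin{equation*}
G_{225} = \bigg(\frac{\sqrt{5}+1}{2}\bigg)(2+\sqrt{3})^{1/3}\bigg(\frac{\sqrt{4+\sqrt{15}}+(15)^{1/4}}{2}\bigg).
\end{equation*}
Forming the quotient $G_{225}/G_{25}^3$, one factor of $(\sqrt{5}+1)/2$ cancels, leaving $\big((\sqrt{5}+1)/2\big)^2 = (3+\sqrt{5})/2$ in the denominator. The only genuine algebra is the rationalization $2/(3+\sqrt{5}) = (3-\sqrt{5})/2$, which turns this denominator into precisely the factor $(3-\sqrt{5})/2$ appearing in the statement; the bracketed expression then matches the claimed form.

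The remaining step is to multiply by $\varphi(e^{-5\pi})/\varphi(e^{-\pi})$, which is supplied by Corollary~\ref{cor:5} and is already quoted in the proof of Corollary~\ref{cor:3:15}, namely
\begin{equation*}
\frac{\varphi(e^{-5\pi})}{\varphi(e^{-\pi})} = \frac{1}{(5\sqrt{5}-10)^{1/2}}.
\end{equation*}
Multiplying the two ratios produces the prefactor $1/\big(3(5\sqrt{5}-10)^{1/2}\big)$ and completes the proof. No step presents a real obstacle; the one point demanding attention is that $n = 25$ is a perfect square, so the invariant formula from Theorem~\ref{thm:9n} expresses the ratio relative to $\varphi(e^{-5\pi})$ rather than $\varphi(e^{-\pi})$, and one must not overlook this auxiliary conversion factor.
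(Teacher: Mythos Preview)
Your proposal is correct and follows exactly the same approach as the paper: apply Theorem~\ref{thm:9n} with $n=25$, insert the values of $G_{25}$ and $G_{225}$ already used in Corollary~\ref{cor:3:15}, and then multiply by $\varphi(e^{-5\pi})/\varphi(e^{-\pi})$. You have in fact supplied more detail than the paper, which simply says the remainder is analogous to the proof of Corollary~\ref{cor:3:15}.
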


\begin{proof}
We appeal to Theorem~\ref{thm:9n} for $n = 25$. The remainder of the proof is analogous to the proof of Corollary~\ref{cor:3:15}.
\end{proof}

An alternative representation for $\varphi(e^{-45\pi})/\varphi(e^{-\pi})$ was established by Ramanujan in his first notebook~\cite[p.~312]{RamanujanEarlierI}, \cite{BerndtChan}, \cite[p.~328]{BerndtV}, namely,
\begin{equation*}
\frac{\varphi(e^{-45\pi})}{\varphi(e^{-\pi})} = \frac{3 + \sqrt{5} + (\sqrt{3} + \sqrt{5} + (60)^{1/4})(2 + \sqrt{3})^{1/3}}{3(10 + 10\sqrt{5})^{1/2}}.
\end{equation*}

\begin{corollary}\label{cor:3sqrt37} We have
\begin{align*}
\frac{\varphi(e^{-3\pi\sqrt{37}})}{\varphi(e^{-\pi\sqrt{37}})} &= \frac{1}{\sqrt{3}}\Bigg(1 + 2\sqrt{2}(\sqrt{37} - 6)^{3/2}(7\sqrt{3} + 2\sqrt{37})^{1/2}\\
&\qquad\qquad\times\bigg(\sqrt{42193 + 24360\sqrt{3}} + \sqrt{42192 + 24360\sqrt{3}}\bigg)^{1/2}\Bigg)^{1/4}.
\end{align*}
\end{corollary}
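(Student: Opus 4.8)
The plan is to apply Theorem~\ref{thm:3n} with $n = 37$, following verbatim the strategy of the proof of Corollary~\ref{cor:3sqrt13}. This reduces the whole computation to substituting the two class invariants $G_{37}$ and $G_{333} = G_{9\cdot 37}$ into the quotient $\tfrac{2\sqrt{2}\,G_{9n}^3}{G_n^9}$ appearing in Theorem~\ref{thm:3n} and simplifying. First I would recall from \eqref{G37} that $G_{37}^4 = 6 + \sqrt{37}$, so that $G_{37} = (6+\sqrt{37})^{1/4}$; exploiting the unit relation $(6+\sqrt{37})(\sqrt{37}-6)=1$, this gives $G_{37}^{-9} = (\sqrt{37}-6)^{9/4}$.

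Next I would quote the value of $G_{333}$ from Ramanujan's list in \cite{BerndtV}. As with $G_{117}$ in Corollary~\ref{cor:3sqrt13}, it factors as a product of $(6+\sqrt{37})^{1/4}$, a sixth root of $7\sqrt{3}+2\sqrt{37}$, and a single surd of the shape $\tfrac12(X+Y)$ built from nested square roots. Cubing then produces $(7\sqrt{3}+2\sqrt{37})^{1/2}$ from the sixth-root factor, which is exactly the exponent appearing in the statement.

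The key step is a cube identity analogous to the one used for $n=13$, namely
\begin{equation*}
\Big(\tfrac12(X+Y)\Big)^3 = \big(\sqrt{42193 + 24360\sqrt{3}} + \sqrt{42192 + 24360\sqrt{3}}\,\big)^{1/2},
\end{equation*}
which rewrites the cube $G_{333}^3$ so that the nested-radical factor of the corollary appears. After cubing $G_{333}$ and multiplying by $G_{37}^{-9}$, the powers of $6\pm\sqrt{37}$ collapse via the unit relation: the factor $(6+\sqrt{37})^{3/4}(\sqrt{37}-6)^{9/4}$ reduces to $(\sqrt{37}-6)^{3/2}$, precisely the prefactor in the corollary. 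Multiplying through by $2\sqrt{2}$ and inserting into Theorem~\ref{thm:3n} yields the claimed formula.

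I expect the main obstacle to be the verification of the cube identity above: squaring its right-hand side and cubing its left-hand side both produce expressions of the form $a + b\sqrt{3}$ with large integer coefficients, and matching them demands careful bookkeeping (the consecutive constants $42193$ and $42192$ encode the relation $P^2 - Q^2 = 1$ between the two inner radicals $P = \sqrt{42193 + 24360\sqrt{3}}$ and $Q = \sqrt{42192 + 24360\sqrt{3}}$). The remaining exponent arithmetic is routine once the relation $(6+\sqrt{37})(\sqrt{37}-6)=1$ is used.
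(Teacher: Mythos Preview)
Your proposal is correct and follows essentially the same approach as the paper: apply Theorem~\ref{thm:3n} with $n=37$, insert $G_{37}=(6+\sqrt{37})^{1/4}$ and Ramanujan's value $G_{333}=(6+\sqrt{37})^{1/4}(7\sqrt{3}+2\sqrt{37})^{1/6}\big(\tfrac12(\sqrt{7+2\sqrt{3}}+\sqrt{3+2\sqrt{3}})\big)$, use the cube identity $\big(\tfrac12(\sqrt{7+2\sqrt{3}}+\sqrt{3+2\sqrt{3}})\big)^3=\big(\sqrt{42193+24360\sqrt{3}}+\sqrt{42192+24360\sqrt{3}}\big)^{1/2}$, and simplify via $(6+\sqrt{37})(\sqrt{37}-6)=1$. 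The paper merely records the explicit $X,Y$ in the surd factor and states the cube identity without further comment.
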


\begin{proof}
We utilize Theorem~\ref{thm:3n} with $n = 37$. From \cite[p.~191]{BerndtV},
\begin{equation*}
G_{37} = (\sqrt{37} + 6)^{1/4} = (\sqrt{37} - 6)^{-1/4},
\end{equation*}
and from \cite[p.~196]{BerndtV},
\begin{equation*}
G_{333} = (\sqrt{37} + 6)^{1/4}(7\sqrt{3} + 2\sqrt{37})^{1/6}\left(\frac{\sqrt{7 + 2\sqrt{3}} + \sqrt{3 + 2\sqrt{3}}}{2}\right).
\end{equation*}
With the use of the identity
\begin{equation*}
\left(\frac{\sqrt{7 + 2\sqrt{3}} + \sqrt{3 + 2\sqrt{3}}}{2}\right)^3 = \bigg(\sqrt{42193 + 24360\sqrt{3}} + \sqrt{42192 + 24360\sqrt{3}}\bigg)^{1/2},
\end{equation*}
we can complete the proof.
\end{proof}

\begin{corollary}\label{cor:9sqrt37} We have
\begin{equation*}
\frac{\varphi(e^{-9\pi\sqrt{37}})}{\varphi(e^{-\pi\sqrt{37}})} = \frac{1}{3}\left(1 + \sqrt{2}(\sqrt{37} - 6)^{1/2}(7\sqrt{3} + 2\sqrt{37})^{1/6}\left(\frac{\sqrt{7 + 2\sqrt{3}} + \sqrt{3 + 2\sqrt{3}}}{2}\right)\right).
\end{equation*}
\end{corollary}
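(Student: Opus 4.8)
The plan is to invoke Theorem~\ref{thm:9n} with $n = 37$, so that the whole problem collapses to evaluating the single quotient $G_{333}/G_{37}^3$. The class invariants needed are already in hand from the proof of Corollary~\ref{cor:3sqrt37}, where the values
\begin{equation*}
G_{37} = (\sqrt{37} + 6)^{1/4} = (\sqrt{37} - 6)^{-1/4}
\end{equation*}
and
\begin{equation*}
G_{333} = (\sqrt{37} + 6)^{1/4}(7\sqrt{3} + 2\sqrt{37})^{1/6}\left(\frac{\sqrt{7 + 2\sqrt{3}} + \sqrt{3 + 2\sqrt{3}}}{2}\right)
\end{equation*}
are recorded from \cite[pp.~191, 196]{BerndtV}; thus no new modular evaluation is required.

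First I would cube $G_{37}$ and divide, noting that the two factors of $\sqrt{37}+6$ combine to the power $(\sqrt{37}+6)^{1/4 - 3/4} = (\sqrt{37}+6)^{-1/2}$. The one step warranting care is the unit relation $(\sqrt{37}+6)(\sqrt{37}-6) = 37 - 36 = 1$, already exploited above to express $G_{37}$ through $\sqrt{37}-6$; it converts $(\sqrt{37}+6)^{-1/2}$ into $(\sqrt{37}-6)^{1/2}$. This yields
\begin{equation*}
\frac{G_{333}}{G_{37}^3} = (\sqrt{37} - 6)^{1/2}(7\sqrt{3} + 2\sqrt{37})^{1/6}\left(\frac{\sqrt{7 + 2\sqrt{3}} + \sqrt{3 + 2\sqrt{3}}}{2}\right),
\end{equation*}
and substituting this into Theorem~\ref{thm:9n} produces the stated evaluation at once.

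I do not expect any genuine obstacle: once the two invariants are available the argument is purely algebraic, and the only delicate bookkeeping is tracking the fractional exponents of $\sqrt{37}\pm 6$. It is worth observing how this dovetails with the cubic companion Corollary~\ref{cor:3sqrt37}: Theorem~\ref{thm:3n} calls for $G_{333}^3/G_{37}^9 = (G_{333}/G_{37}^3)^3$, which simply cubes the three factors above---replacing $(\sqrt{37}-6)^{1/2}$ by $(\sqrt{37}-6)^{3/2}$, $(7\sqrt{3}+2\sqrt{37})^{1/6}$ by its square root, and the bracket by its cube---whereas Theorem~\ref{thm:9n} uses the first power $G_{333}/G_{37}^3$ directly. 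This structural parallel serves as a convenient consistency check on the final answer.
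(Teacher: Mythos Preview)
Your proposal is correct and matches the paper's approach exactly: the paper's proof simply reads ``Apply Theorem~\ref{thm:9n} for $n = 37$. The remainder of the proof is similar to the proof of Corollary~\ref{cor:3sqrt37},'' which is precisely what you carry out in detail, using the same class invariants $G_{37}$ and $G_{333}$ and the unit relation $(\sqrt{37}+6)(\sqrt{37}-6)=1$.
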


\begin{proof}
Apply Theorem~\ref{thm:9n} for $n = 37$. The remainder of the proof is similar to the proof of Corollary~\ref{cor:3sqrt37}.
\end{proof}

\begin{corollary}\label{cor:21} We have
\begin{align*}
\frac{\varphi(e^{-21\pi})}{\varphi(e^{-\pi})} &= \frac{1}{\sqrt{3}}\left(\frac{\sqrt{13 + \sqrt{7}} + \sqrt{7 + 3\sqrt{7}}}{14}(28)^{1/8}\right)^{1/2}\\
&\qquad\times\left\{1 + 2\sqrt{2}\bigg(\sqrt{932 + 352\sqrt{7}} - \sqrt{931 + 352\sqrt{7}}\bigg)\vphantom{\left(\frac{\sqrt{3 + \sqrt{7}} + (6\sqrt{7})^{1/4}}{\sqrt{3 + \sqrt{7}} - (6\sqrt{7})^{1/4}}\right)^{3/2}}\right.\\
&\qquad\qquad\times\left.\bigg(\frac{\sqrt{3} + \sqrt{7}}{2}\bigg)^{3/2}(2 + \sqrt{3})^{1/2}\left(\frac{\sqrt{3 + \sqrt{7}} + (6\sqrt{7})^{1/4}}{\sqrt{3 + \sqrt{7}} - (6\sqrt{7})^{1/4}}\right)^{3/2}\right\}^{1/4}.
\end{align*}
\end{corollary}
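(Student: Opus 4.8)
The plan is to exploit the factorization $49=7^2$ so that Theorem~\ref{thm:3n} applies with $n=49$, since then $3\sqrt n=21$ and $\sqrt n=7$. Because Theorem~\ref{thm:3n} compares $\varphi(e^{-3\pi\sqrt n})$ with $\varphi(e^{-\pi\sqrt n})$, at $n=49$ it produces a ratio based at the intermediate argument $e^{-7\pi}$, not $e^{-\pi}$. I would therefore write
\begin{equation*}
\frac{\varphi(e^{-21\pi})}{\varphi(e^{-\pi})}=\frac{\varphi(e^{-21\pi})}{\varphi(e^{-7\pi})}\cdot\frac{\varphi(e^{-7\pi})}{\varphi(e^{-\pi})},
\end{equation*}
evaluate the first factor by Theorem~\ref{thm:3n} with $n=49$ as
\begin{equation*}
\frac{\varphi(e^{-21\pi})}{\varphi(e^{-7\pi})}=\frac{1}{\sqrt3}\bigg(1+\frac{2\sqrt2\,G_{441}^3}{G_{49}^9}\bigg)^{1/4},
\end{equation*}
and insert for the second factor the independently established value $\varphi(e^{-7\pi})/\varphi(e^{-\pi})$ (the septic companion), which equals the leading radical $\bigl(\tfrac{\sqrt{13+\sqrt7}+\sqrt{7+3\sqrt7}}{14}(28)^{1/8}\bigr)^{1/2}$ of the claimed formula. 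This is precisely the device used in Corollary~\ref{cor:3:15}, where the value $\varphi(e^{-5\pi})/\varphi(e^{-\pi})$ from Corollary~\ref{cor:5} was inserted to pass from $\varphi(e^{-15\pi})/\varphi(e^{-5\pi})$ to $\varphi(e^{-15\pi})/\varphi(e^{-\pi})$.

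I would then substitute the class invariants $G_{49}$ and $G_{441}=G_{9\cdot 49}$ from Ramanujan's list in \cite{BerndtV}. As in the earlier cubic corollaries, $G_{441}$ carries a factor $(2+\sqrt3)^{1/3}$ coming from the nine in $9n$, together with the fundamental unit $\tfrac{5+\sqrt{21}}{2}=\bigl(\tfrac{\sqrt7+\sqrt3}{2}\bigr)^2$ of $\mathbb{Q}(\sqrt{21})$ already met in Corollary~\ref{cor:3sqrt7}, and a further quadratic surd built from $\sqrt{3+\sqrt7}$ and $(6\sqrt7)^{1/4}$. Cubing $G_{441}$ and dividing by $G_{49}^9$, the quantity $2\sqrt2\,G_{441}^3/G_{49}^9$ should collapse to
\begin{equation*}
2\sqrt2\bigg(\frac{\sqrt3+\sqrt7}{2}\bigg)^{3/2}(2+\sqrt3)^{1/2}\bigl(\sqrt{932+352\sqrt7}-\sqrt{931+352\sqrt7}\bigr)\left(\frac{\sqrt{3+\sqrt7}+(6\sqrt7)^{1/4}}{\sqrt{3+\sqrt7}-(6\sqrt7)^{1/4}}\right)^{3/2},
\end{equation*}
which is exactly the expression sitting inside the fourth root of the corollary.

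The main obstacle is purely the simplification of nested radicals, and two surd identities must be verified. First, the $\mathbb{Q}(\sqrt7)$-contribution of $G_{49}^9$ has to be matched with $\sqrt{932+352\sqrt7}-\sqrt{931+352\sqrt7}$; the key observation is that $(932+352\sqrt7)-(931+352\sqrt7)=1$, so $\sqrt{932+352\sqrt7}\pm\sqrt{931+352\sqrt7}$ is a unit whose two signs are reciprocals, exactly as in the consecutive-radical factors of Corollaries~\ref{cor:3sqrt13} and~\ref{cor:3sqrt37}. Second, the cube of the bracket $\tfrac{\sqrt{3+\sqrt7}+(6\sqrt7)^{1/4}}{\sqrt{3+\sqrt7}-(6\sqrt7)^{1/4}}$ coming from $G_{441}^3$ must recombine cleanly with the $(2+\sqrt3)$ and unit factors. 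Once these are in hand, multiplying the two factors, collecting the powers of $28^{1/8}$ and of $\sqrt3$, and using $G_7=2^{1/4}$ where it enters the septic value completes the evaluation. As a numerical safeguard against exponent or sign slips, one checks that $G_{441}^3/G_{49}^9$ comes out near $2\sqrt2$, whence $1+2\sqrt2\cdot G_{441}^3/G_{49}^9\approx 9$ and $\tfrac{1}{\sqrt3}\cdot 9^{1/4}=1$, consistent with $\varphi(e^{-21\pi})\approx\varphi(e^{-7\pi})\approx 1$.
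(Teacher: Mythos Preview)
Your proposal is correct and follows essentially the same route as the paper: apply Theorem~\ref{thm:3n} with $n=49$, insert the known septic value \eqref{e7} for $\varphi(e^{-7\pi})/\varphi(e^{-\pi})$, substitute Ramanujan's values of $G_{49}$ and $G_{441}$, and simplify. The paper makes the two radical simplifications you flag explicit, namely that the factor $\sqrt{\tfrac{2+\sqrt7+\sqrt{7+4\sqrt7}}{2}}$ inside $G_{441}$ equals $G_{49}$ (so $G_{441}^3/G_{49}^9$ reduces to $G_{49}^{-6}$ times the remaining factors) and that $G_{49}^{-6}=\sqrt{932+352\sqrt7}-\sqrt{931+352\sqrt7}$; your remark about $G_7$ is extraneous here, since only $G_{49}$ and $G_{441}$ enter.
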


Another representation for $\varphi(e^{-21\pi})/\varphi(e^{-\pi})$ is given in \cite[Theorem~6.4]{Rebak}.

\begin{proof}
Invoke Theorem~\ref{thm:3n} for $n = 49$. We know from \cite[p.~191]{BerndtV} that
\begin{equation}\label{G49}
G_{49} = \frac{\sqrt{4 + \sqrt{7}} + 7^{1/4}}{2} = \left(\frac{\sqrt{4 + \sqrt{7}} - 7^{1/4}}{2}\right)^{-1},
\end{equation}
and from \cite[p.~197]{BerndtV} that
\begin{equation*}
G_{441} = \sqrt{\frac{\sqrt{3} + \sqrt{7}}{2}}(2 + \sqrt{3})^{1/6}\sqrt{\frac{2 + \sqrt{7} + \sqrt{7 + 4\sqrt{7}}}{2}}\sqrt{\frac{\sqrt{3 + \sqrt{7}} + (6\sqrt{7})^{1/4}}{\sqrt{3 + \sqrt{7}} - (6\sqrt{7})^{1/4}}}.
\end{equation*}
Note that
\begin{equation*}
\frac{\sqrt{4 + \sqrt{7}} + 7^{1/4}}{2} = \sqrt{\frac{2 + \sqrt{7} + \sqrt{7 + 4\sqrt{7}}}{2}}
\end{equation*}
and
\begin{equation*}
\left(\frac{\sqrt{4 + \sqrt{7}} - 7^{1/4}}{2}\right)^6 = \sqrt{932 + 352\sqrt{7}} - \sqrt{931 + 352\sqrt{7}}.
\end{equation*}
Using the evaluation \cite[p.~297]{RamanujanEarlierI}, \cite{BerndtChan}, \cite[p.~328]{BerndtV}, \cite{Rebak}
\begin{equation}\label{e7}
\frac{\varphi^2(e^{-7\pi})}{\varphi^2(e^{-\pi})} = \frac{\sqrt{13 + \sqrt{7}} + \sqrt{7 + 3\sqrt{7}}}{14}(28)^{1/8},
\end{equation}
we have all the ingredients to complete the proof.
\end{proof}

\begin{corollary}\label{cor:63} We have
\begin{align*}
\frac{\varphi(e^{-63\pi})}{\varphi(e^{-\pi})} &= \frac{1}{3}\left(\frac{\sqrt{13 + \sqrt{7}} + \sqrt{7 + 3\sqrt{7}}}{14}(28)^{1/8}\right)^{1/2}\\
&\qquad\times\left\{1 + \sqrt{2}\left(\frac{2 + \sqrt{7} - \sqrt{7 + 4\sqrt{7}}}{2}\right)\vphantom{\left(\frac{\sqrt{3 + \sqrt{7}} + (6\sqrt{7})^{1/4}}{\sqrt{3 + \sqrt{7}} - (6\sqrt{7})^{1/4}}\right)^{1/2}}\right.\\
&\qquad\qquad\times\left.\bigg(\frac{\sqrt{3} + \sqrt{7}}{2}\bigg)^{1/2}(2 + \sqrt{3})^{1/6}\left(\frac{\sqrt{3 + \sqrt{7}} + (6\sqrt{7})^{1/4}}{\sqrt{3 + \sqrt{7}} - (6\sqrt{7})^{1/4}}\right)^{1/2}\right\}.
\end{align*}
\end{corollary}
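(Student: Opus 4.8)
The plan is to follow the proof of Corollary~\ref{cor:21} almost verbatim, replacing Theorem~\ref{thm:3n} with Theorem~\ref{thm:9n}. First I would invoke Theorem~\ref{thm:9n} with $n = 49$, so that $\sqrt{n} = 7$ and $9\sqrt{n} = 63$. This yields
\begin{equation*}
\frac{\varphi(e^{-63\pi})}{\varphi(e^{-7\pi})} = \frac{1}{3}\bigg(1 + \frac{\sqrt{2}\,G_{441}}{G_{49}^3}\bigg),
\end{equation*}
where $G_{49}$ is recorded in \eqref{G49} and $G_{441}$ is the value from \cite[p.~197]{BerndtV} already used in the proof of Corollary~\ref{cor:21}. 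Since the target ratio is measured against $\varphi(e^{-\pi})$ rather than $\varphi(e^{-7\pi})$, I would at the end multiply by $\varphi(e^{-7\pi})/\varphi(e^{-\pi})$, obtained as the square root of \eqref{e7}.

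The core of the argument is the simplification of $G_{441}/G_{49}^3$. Recalling from the proof of Corollary~\ref{cor:21} that
\begin{equation*}
G_{49} = \sqrt{\frac{2 + \sqrt{7} + \sqrt{7 + 4\sqrt{7}}}{2}},
\end{equation*}
I note that the middle radical factor of $G_{441}$ is precisely $G_{49}$, so it cancels one power of $G_{49}$ and leaves
\begin{equation*}
\frac{G_{441}}{G_{49}^3} = \frac{1}{G_{49}^2}\bigg(\frac{\sqrt{3} + \sqrt{7}}{2}\bigg)^{1/2}(2 + \sqrt{3})^{1/6}\left(\frac{\sqrt{3 + \sqrt{7}} + (6\sqrt{7})^{1/4}}{\sqrt{3 + \sqrt{7}} - (6\sqrt{7})^{1/4}}\right)^{1/2}.
\end{equation*}
The remaining step, which I expect to be the only genuinely computational point, is to evaluate $G_{49}^{-2}$. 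Using the reciprocal form of \eqref{G49} together with the identity $7^{1/4}\sqrt{4 + \sqrt{7}} = \sqrt{7 + 4\sqrt{7}}$, I would verify that
\begin{equation*}
\frac{1}{G_{49}^2} = \left(\frac{\sqrt{4 + \sqrt{7}} - 7^{1/4}}{2}\right)^2 = \frac{2 + \sqrt{7} - \sqrt{7 + 4\sqrt{7}}}{2}.
\end{equation*}
Substituting this back reproduces exactly the bracketed expression in the claimed formula.

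The main obstacle is therefore purely the nested-radical bookkeeping above, and it is lighter here than in Corollary~\ref{cor:21}: because Theorem~\ref{thm:9n} is affine in the class-invariant quotient, with neither a fourth root nor a cube of $G_{441}$ to contend with, no analogue of the sixth-power identity $\big(\tfrac{\sqrt{4+\sqrt{7}}-7^{1/4}}{2}\big)^6 = \sqrt{932 + 352\sqrt{7}} - \sqrt{931 + 352\sqrt{7}}$ is required. Once the cancellation of $G_{49}$ inside $G_{441}$ and the evaluation of $G_{49}^{-2}$ are in place, I would assemble
\begin{equation*}
\frac{\varphi(e^{-63\pi})}{\varphi(e^{-7\pi})} = \frac{1}{3}\left\{1 + \sqrt{2}\bigg(\frac{2 + \sqrt{7} - \sqrt{7 + 4\sqrt{7}}}{2}\bigg)\bigg(\frac{\sqrt{3} + \sqrt{7}}{2}\bigg)^{1/2}(2 + \sqrt{3})^{1/6}\left(\frac{\sqrt{3 + \sqrt{7}} + (6\sqrt{7})^{1/4}}{\sqrt{3 + \sqrt{7}} - (6\sqrt{7})^{1/4}}\right)^{1/2}\right\},
\end{equation*}
and finish by multiplying through by $\big(\tfrac{\sqrt{13+\sqrt{7}}+\sqrt{7+3\sqrt{7}}}{14}(28)^{1/8}\big)^{1/2}$ from \eqref{e7}.
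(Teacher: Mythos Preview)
Your proposal is correct and follows essentially the same approach as the paper: invoke Theorem~\ref{thm:9n} with $n=49$, cancel one copy of $G_{49}$ inside $G_{441}$, convert the remaining $G_{49}^{-2}$ to $\tfrac{2+\sqrt{7}-\sqrt{7+4\sqrt{7}}}{2}$, and multiply by the square root of \eqref{e7}. The paper records the key simplification as the reciprocal identity $\sqrt{\tfrac{2+\sqrt{7}+\sqrt{7+4\sqrt{7}}}{2}}=\big(\tfrac{2+\sqrt{7}-\sqrt{7+4\sqrt{7}}}{2}\big)^{-1/2}$, which is equivalent to your direct squaring of $G_{49}^{-1}$.
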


Corollary~\ref{cor:63} can be found in \cite{BerndtChan}, in a slightly different form.

\begin{proof}
We apply Theorem~\ref{thm:9n} for $n = 49$. Note that
\begin{equation*}
\sqrt{\frac{2 + \sqrt{7} + \sqrt{7 + 4\sqrt{7}}}{2}} = \left(\frac{2 + \sqrt{7} - \sqrt{7 + 4\sqrt{7}}}{2}\right)^{-1/2}.
\end{equation*}
The remainder of the proof is similar to the proof of Corollary~\ref{cor:21}.
\end{proof}

\begin{corollary}\label{cor:3sqrt85} We have
\begin{multline*}
\frac{\varphi(e^{-3\pi\sqrt{85}})}{\varphi(e^{-\pi\sqrt{85}})} = \frac{1}{\sqrt{3}}\left(1 + 2\sqrt{2}(\sqrt{5} - 2)^2\bigg(\frac{\sqrt{85} - 9}{2}\bigg)^{3/2}(16 + \sqrt{255})^{1/4}(4 + \sqrt{15})^{3/4}\vphantom{\left(\sqrt{\frac{6 + \sqrt{51}}{4}} + \sqrt{\frac{10 + \sqrt{51}}{4}}\right)^{3/2}}\right.\\
\times\left.\left(\sqrt{\frac{6 + \sqrt{51}}{4}} + \sqrt{\frac{10 + \sqrt{51}}{4}}\right)^{3/2}\left(\sqrt{\frac{18 + 3\sqrt{51}}{4}} + \sqrt{\frac{22 + 3\sqrt{51}}{4}}\right)^{3/2}\right)^{1/4}.
\end{multline*}
\end{corollary}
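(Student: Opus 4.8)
The plan is to specialize Theorem~\ref{thm:3n} to $n=85$, which immediately yields
\begin{equation*}
\frac{\varphi(e^{-3\pi\sqrt{85}})}{\varphi(e^{-\pi\sqrt{85}})}=\frac{1}{\sqrt{3}}\bigg(1+\frac{2\sqrt{2}\,G_{765}^3}{G_{85}^9}\bigg)^{1/4}.
\end{equation*}
The whole task then reduces to inserting the two tabulated class invariants $G_{85}$ and $G_{765}=G_{9\cdot 85}$ from Ramanujan's list \cite[pp.~189--199]{BerndtV} and simplifying the quotient $G_{765}^3/G_{85}^9$ into the stated closed form. This is exactly the route already followed in Corollaries~\ref{cor:3sqrt13}, \ref{cor:3sqrt37}, and~\ref{cor:3:15}.

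First I would record $G_{85}$. Since $85=5\cdot17$, this invariant is built from factors reflecting its two prime divisors, including units of the types $\tfrac{\sqrt{5}+1}{2}$ and $\tfrac{\sqrt{85}+9}{2}$. Carried through the combination $G_{765}^3/G_{85}^9$, these shared base factors meet with net negative exponents and, via the reciprocal relations $\big(\tfrac{\sqrt{5}+1}{2}\big)^{-1}=\tfrac{\sqrt{5}-1}{2}$ and $\big(\tfrac{\sqrt{85}+9}{2}\big)^{-1}=\tfrac{\sqrt{85}-9}{2}$ together with the collapse $\big(\tfrac{\sqrt{5}-1}{2}\big)^{6}=(\sqrt{5}-2)^2$, reproduce the leading factors $(\sqrt{5}-2)^2$ and $\big(\tfrac{\sqrt{85}-9}{2}\big)^{3/2}$ of the statement, just as $(\sqrt{5}-2)^2$ arose in the proof of Corollary~\ref{cor:3:15}.

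Next I would record $G_{765}$ from the same list. This value carries the factors $(16+\sqrt{255})$ and $(4+\sqrt{15})$ alongside two nested-radical blocks of the shape $\Big(\sqrt{\tfrac{6+\sqrt{51}}{4}}+\sqrt{\tfrac{10+\sqrt{51}}{4}}\Big)^{1/2}$ and $\Big(\sqrt{\tfrac{18+3\sqrt{51}}{4}}+\sqrt{\tfrac{22+3\sqrt{51}}{4}}\Big)^{1/2}$, so that cubing $G_{765}$ delivers precisely the $3/2$ powers displayed in the corollary. Forming $G_{765}^3/G_{85}^9$, substituting into the identity above, and simplifying then completes the proof.

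The main obstacle will be purely the algebraic bookkeeping. One must track the exponents of the several radicals $\sqrt{5}$, $\sqrt{15}$, $\sqrt{51}$, $\sqrt{85}$, and $\sqrt{255}$---whose radicands $5$, $3\cdot5$, $3\cdot17$, $5\cdot17$, and $3\cdot5\cdot17$ interlace the three primes $3,5,17$---and confirm that the cube and sixth-power reciprocal identities collapse the raw quotient into the compact form stated, exactly as the auxiliary cube identities were verified in Corollaries~\ref{cor:3sqrt13} and~\ref{cor:3sqrt37}. No idea beyond Theorem~\ref{thm:3n} and the tabulated invariants is needed; the content is entirely computational.
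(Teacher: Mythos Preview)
Your proposal is correct and follows essentially the same route as the paper: apply Theorem~\ref{thm:3n} with $n=85$, insert Ramanujan's tabulated values of $G_{85}$ and $G_{765}$, and simplify using the reciprocal identities and the collapse $\big(\tfrac{\sqrt{5}-1}{2}\big)^6=(\sqrt{5}-2)^2$. The only small addition in the paper's own proof is the observation $\sqrt{\tfrac{3+\sqrt{5}}{2}}=\tfrac{\sqrt{5}+1}{2}$, which handles the leading factor of $G_{765}$; otherwise the argument is exactly as you outline.
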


\begin{proof}
We apply Theorem~\ref{thm:3n} for $n = 85$. We know that \cite[p.~193]{BerndtV}
\begin{equation*}
G_{85} = \bigg(\frac{\sqrt{5} + 1}{2}\bigg)\bigg(\frac{\sqrt{85} + 9}{2}\bigg)^{1/4} = \bigg(\frac{\sqrt{5} - 1}{2}\bigg)^{-1}\bigg(\frac{\sqrt{85} - 9}{2}\bigg)^{-1/4}
\end{equation*}
and \cite[p.~198]{BerndtV}
\begin{align*}
G_{765} &= \sqrt{\frac{3 + \sqrt{5}}{2}}(16 + \sqrt{255})^{1/12}(4 + \sqrt{15})^{1/4}\bigg(\frac{\sqrt{85} + 9}{2}\bigg)^{1/4}\\
&\qquad\times\left(\sqrt{\frac{6 + \sqrt{51}}{4}} + \sqrt{\frac{10 + \sqrt{51}}{4}}\right)^{1/2}\left(\sqrt{\frac{18 + 3\sqrt{51}}{4}} + \sqrt{\frac{22 + 3\sqrt{51}}{4}}\right)^{1/2}.
\end{align*}
By noting that
\begin{equation*}
\sqrt{\frac{3 + \sqrt{5}}{2}} = \frac{\sqrt{5} + 1}{2} \quad \text{and} \quad \bigg(\frac{\sqrt{5} - 1}{2}\bigg)^6 = (\sqrt{5} - 2)^2,
\end{equation*}
we complete the proof.
\end{proof}

\begin{corollary}\label{cor:9sqrt85} We have
\begin{multline*}
\frac{\varphi(e^{-9\pi\sqrt{85}})}{\varphi(e^{-\pi\sqrt{85}})} = \frac{1}{3}\left(1 + \sqrt{2}(\sqrt{5} - 2)^{2/3}\bigg(\frac{\sqrt{85} - 9}{2}\bigg)^{1/2}(16 + \sqrt{255})^{1/12}(4 + \sqrt{15})^{1/4}\vphantom{\left(\sqrt{\frac{6 + \sqrt{51}}{4}} + \sqrt{\frac{10 + \sqrt{51}}{4}}\right)^{1/2}}\right.\\
\times\left.\left(\sqrt{\frac{6 + \sqrt{51}}{4}} + \sqrt{\frac{10 + \sqrt{51}}{4}}\right)^{1/2}\left(\sqrt{\frac{18 + 3\sqrt{51}}{4}} + \sqrt{\frac{22 + 3\sqrt{51}}{4}}\right)^{1/2}\right).
\end{multline*}
\end{corollary}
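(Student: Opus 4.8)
The plan is to invoke Theorem~\ref{thm:9n} with $n = 85$, so that the entire evaluation collapses to computing the single quotient $\sqrt{2}\,G_{765}/G_{85}^3$. The two class invariants I need, $G_{85}$ and $G_{765}$, are precisely those already recorded in the proof of Corollary~\ref{cor:3sqrt85}, so no new invariant computation is required; I would simply quote them from there, using in particular the reciprocal form
\begin{equation*}
G_{85} = \bigg(\frac{\sqrt{5} - 1}{2}\bigg)^{-1}\bigg(\frac{\sqrt{85} - 9}{2}\bigg)^{-1/4}
\end{equation*}
and the expression for $G_{765}$ in terms of $\sqrt{(3+\sqrt{5})/2}$, $(16+\sqrt{255})^{1/12}$, $(4+\sqrt{15})^{1/4}$, $((\sqrt{85}+9)/2)^{1/4}$, and the two doubly-nested radical factors.

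First I would cube $G_{85}$ and pass to the reciprocal, obtaining
\begin{equation*}
\frac{1}{G_{85}^3} = \bigg(\frac{\sqrt{5} - 1}{2}\bigg)^{3}\bigg(\frac{\sqrt{85} - 9}{2}\bigg)^{3/4}.
\end{equation*}
Multiplying by $\sqrt{2}\,G_{765}$ then leaves three kinds of factors to simplify. The golden-ratio part simplifies via $\sqrt{(3+\sqrt{5})/2} = (\sqrt{5}+1)/2 = ((\sqrt{5}-1)/2)^{-1}$ together with $((\sqrt{5}-1)/2)^3 = \sqrt{5}-2$, which combine to give
\begin{equation*}
\frac{\sqrt{5}+1}{2}\bigg(\frac{\sqrt{5}-1}{2}\bigg)^{3} = \bigg(\frac{\sqrt{5}-1}{2}\bigg)^{2} = (\sqrt{5}-2)^{2/3}.
\end{equation*}
The $\sqrt{85}$ part simplifies through the reciprocal relation $\frac{\sqrt{85}+9}{2}\cdot\frac{\sqrt{85}-9}{2} = 1$, so that $((\sqrt{85}+9)/2)^{1/4}((\sqrt{85}-9)/2)^{3/4} = ((\sqrt{85}-9)/2)^{1/2}$. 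The remaining factors $(16+\sqrt{255})^{1/12}$, $(4+\sqrt{15})^{1/4}$, and the two doubly-nested radicals pass through unchanged, since here $G_{765}$ enters to the first power rather than cubed as in Corollary~\ref{cor:3sqrt85}. Assembling these pieces yields the stated formula, with the prefactor $\sqrt{2}$ surviving intact inside the braces.

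I do not expect any genuine obstacle here; the computation runs entirely parallel to the proof of Corollary~\ref{cor:3sqrt85}, differing only in that the various radical factors appear to one-third of the powers they carry in the $3\sqrt{85}$ evaluation. The only point requiring care is the bookkeeping of the fractional exponents---in particular confirming that the golden-ratio and $\sqrt{85}$ factors telescope to exactly $(\sqrt{5}-2)^{2/3}$ and $((\sqrt{85}-9)/2)^{1/2}$---which is routine once the reciprocal identities above are in hand.
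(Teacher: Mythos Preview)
Your proposal is correct and follows exactly the paper's approach: apply Theorem~\ref{thm:9n} with $n=85$, quote the class invariants $G_{85}$ and $G_{765}$ from the proof of Corollary~\ref{cor:3sqrt85}, and simplify $\sqrt{2}\,G_{765}/G_{85}^3$. Your verification of the golden-ratio and $\sqrt{85}$ telescoping is accurate, and in fact you have supplied more detail than the paper, which simply states that the proof is similar to that of Corollary~\ref{cor:3sqrt85}.
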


\begin{proof}
We utilize Theorem~\ref{thm:9n} when $n = 85$. The rest of the proof is similar to the proof of Corollary~\ref{cor:3sqrt85}.
\end{proof}

\begin{corollary}\label{cor:3sqrt11} We have
\begin{equation*}
\frac{\varphi(e^{-3\pi\sqrt{11}})}{\varphi(e^{-\pi\sqrt{11}})} = \frac{1}{\sqrt{3}}  \bigg(\frac{1}{6}(1 + \sqrt{33})(6(9 - \sqrt{33}))^{1/3} + \frac{2}{3}(6(9 + \sqrt{33}))^{1/3} + 3 \bigg)^{1/4}.
\end{equation*}
\end{corollary}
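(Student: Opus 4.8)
The plan is to specialize Theorem~\ref{thm:3n} to $n=11$, exactly as in Corollaries~\ref{cor:3sqrt5}--\ref{cor:3sqrt85}. With $q=e^{-\pi\sqrt{11}}$, that theorem gives
\begin{equation*}
\frac{\varphi(e^{-3\pi\sqrt{11}})}{\varphi(e^{-\pi\sqrt{11}})}=\frac{1}{\sqrt3}\bigg(1+\frac{2\sqrt2\,G_{99}^3}{G_{11}^9}\bigg)^{1/4},
\end{equation*}
so everything reduces to inserting the two class invariants $G_{11}$ and $G_{99}$ and then denesting the resulting radical. The value $G_{11}$ I would take from Ramanujan's or Weber's table \cite{BerndtV}, \cite{Weber}; since the discriminant $-44$ has class number $3$, it is a cubic irrationality.

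The crux is $G_{99}$, which---as noted at the start of Section~\ref{section:cubic-examples}---is absent from Ramanujan's list and must be computed. I would obtain it from a modular equation of degree $3$ relating the class invariants $G_n$ and $G_{9n}$; equivalently, from Ramanujan's modular equations connecting $\chi(q)$ and $\chi(q^3)$ through the combinations $G_nG_{9n}=2^{-1/2}q^{-1/6}\chi(q)\chi(q^3)$ and $G_{9n}/G_n=q^{-1/12}\chi(q^3)/\chi(q)$. Substituting the known $G_{11}$ turns this relation into an algebraic equation for $G_{99}$; I would then solve it and select the unique real, positive root compatible with the asymptotics $G_{99}\sim\sqrt2\,G_{11}^3$ as $q\to0$. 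The cubic (level-$3$) structure over $\Q(\sqrt{-11})$ is what introduces the surd $\sqrt{33}=\sqrt{3\cdot11}$ and the cube roots $\big(6(9\pm\sqrt{33})\big)^{1/3}$ appearing in the statement.

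With both invariants in hand, the remainder is algebraic simplification. Writing $A=\big(6(9-\sqrt{33})\big)^{1/3}$ and $B=\big(6(9+\sqrt{33})\big)^{1/3}$, the identities $AB=12$ (since $36(81-33)=1728$) and $A^3+B^3=108$ let me collapse the cross-terms produced by forming $G_{99}^3/G_{11}^9$, after which matching the rational and $\sqrt{33}$ parts yields
\begin{equation*}
1+\frac{2\sqrt2\,G_{99}^3}{G_{11}^9}=\tfrac{1}{6}(1+\sqrt{33})A+\tfrac{2}{3}B+3;
\end{equation*}
taking the fourth root and dividing by $\sqrt3$ then gives the asserted value. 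I expect the main obstacle to be the determination of $G_{99}$---setting up the correct degree-$3$ modular equation and, after reducing to an algebraic equation, isolating and denesting the right root into the compact cube-root form. By comparison the final collapse via $AB=12$ is routine, and the same value of $G_{99}$ simultaneously feeds Corollary~\ref{cor:9sqrt11} through Theorem~\ref{thm:9n}.
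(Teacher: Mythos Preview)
Your plan is correct and matches the paper's proof: apply Theorem~\ref{thm:3n} with $n=11$, take $G_{11}$ from the standard tables, and determine $G_{99}$ from a degree-$3$ modular relation between $G_n$ and $G_{9n}$. The paper makes this last step concrete by citing the identity $\big(1+2\sqrt{2}\,G_{9n}^3/G_n^9\big)\big(1+2\sqrt{2}\,G_n^3/G_{9n}^9\big)=9$ (equation~\eqref{eq:modular9}) or equivalently \cite[p.~205, Theorem~3.5]{BerndtV}, and verifies the final simplification with \emph{Mathematica} rather than by your $AB=12$, $A^3+B^3=108$ bookkeeping, but the route is the same.
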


\begin{proof}
We apply Theorem~\ref{thm:3n} for $n = 11$. We know that~\cite[p.~189]{BerndtV}
\begin{equation*}
G_{11} = 2^{-1/4}x,
\end{equation*}
where
\begin{equation*}
x^3 - 2x^2 + 2x - 2 = 0.
\end{equation*}
Thus, by solving this cubic equation, we find that
\begin{equation*}
G_{11} = \frac{1}{3 \cdot 2^{1/4}} \bigg((3\sqrt{33} + 17)^{1/3} - (3\sqrt{33} - 17)^{1/3} + 2\bigg).
\end{equation*}

Furthermore, for each positive rational number $n$, we have~\cite[p.~145, (4.7.9)]{BorweinBrothersPiAGM}
\begin{equation}\label{eq:modular9}
\bigg(1 + \frac{2\sqrt{2} G_{9n}^3}{G_{n}^9}\bigg)\bigg(1 + \frac{2\sqrt{2} G_{n}^3}{G_{9n}^9}\bigg)  = 9.
\end{equation}
By using the equation~\eqref{eq:modular9}, or by using the formula for $G_{9n}$ given in~\cite[p.~205, Theorem~3.5]{BerndtV}, we can verify with \emph{Mathematica} that for $n = 11$,
\begin{equation*}
G_{99}^3 = \frac{2^{1/4}}{3} \bigg((7822 + 1362\sqrt{33})^{1/3} + (7957 + 1383\sqrt{33})^{1/3} + 2\sqrt{33} + 13 \bigg).
\end{equation*}

Combining these, we can check with \emph{Mathematica} that
\begin{equation*}
\frac{2\sqrt{2}G_{99}^3}{G_{11}^9} = \frac{1}{6}(1 + \sqrt{33})(6(9 - \sqrt{33}))^{1/3} + \frac{2}{3}(6(9 + \sqrt{33}))^{1/3} + 2,
\end{equation*}
and we obtain the desired result.
\end{proof}

\begin{corollary}\label{cor:9sqrt11} We have
\begin{equation*}
\frac{\varphi(e^{-9\pi\sqrt{11}})}{\varphi(e^{-\pi\sqrt{11}})} = \frac{1}{3}  \Bigg(1 + \bigg( \frac{1}{6}(1 + \sqrt{33})(6(9 - \sqrt{33}))^{1/3} + \frac{2}{3}(6(9 + \sqrt{33}))^{1/3} + 2 \bigg)^{1/3}\Bigg).
\end{equation*}
\end{corollary}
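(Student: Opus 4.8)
The plan is to apply Theorem~\ref{thm:9n} with $n = 11$, which expresses the desired quotient in terms of the single quantity $\sqrt{2}\,G_{99}/G_{11}^3$. Crucially, the two class invariants $G_{11}$ and $G_{99}$ needed here were already determined in the proof of Corollary~\ref{cor:3sqrt11}: there we solved the cubic $x^3 - 2x^2 + 2x - 2 = 0$ for $G_{11} = 2^{-1/4}x$, and we recorded the value of $G_{99}^3$ (verified via \emph{Mathematica} using the modular relation~\eqref{eq:modular9}). Hence no fresh invariant evaluation is required.

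The observation that does all the work is the elementary homogeneity identity
\begin{equation*}
\frac{\sqrt{2}\,G_{99}}{G_{11}^3} = \left(\frac{2\sqrt{2}\,G_{99}^3}{G_{11}^9}\right)^{1/3},
\end{equation*}
which follows simply by cubing the left-hand side. The radicand on the right is exactly the expression that was computed in the proof of Corollary~\ref{cor:3sqrt11}, namely
\begin{equation*}
\frac{2\sqrt{2}\,G_{99}^3}{G_{11}^9} = \frac{1}{6}(1 + \sqrt{33})(6(9 - \sqrt{33}))^{1/3} + \frac{2}{3}(6(9 + \sqrt{33}))^{1/3} + 2.
\end{equation*}
Feeding this into Theorem~\ref{thm:9n} immediately produces the claimed formula, and the proof is complete.

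I expect no genuine obstacle. The only conceptual point is recognizing the general relationship between the two companion results: for every positive rational $n$, the quantity $\sqrt{2}\,G_{9n}/G_n^3$ appearing in Theorem~\ref{thm:9n} is the cube root of the quantity $2\sqrt{2}\,G_{9n}^3/G_n^9$ appearing in Theorem~\ref{thm:3n}. This is precisely why each $9\sqrt{n}$-corollary mirrors its $3\sqrt{n}$-counterpart throughout Section~\ref{section:cubic-examples}, and it lets us reuse the heavy symbolic simplification already performed for $n = 11$ in Corollary~\ref{cor:3sqrt11} without repeating it.
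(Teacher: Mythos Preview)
Your proof is correct and follows exactly the paper's approach: apply Theorem~\ref{thm:9n} with $n=11$ and reuse the class-invariant work from Corollary~\ref{cor:3sqrt11}. Your explicit observation that $\sqrt{2}\,G_{9n}/G_n^3 = \bigl(2\sqrt{2}\,G_{9n}^3/G_n^9\bigr)^{1/3}$ is a nice articulation of the structural link the paper leaves implicit in its one-line ``the remainder of the proof is similar'' argument.
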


\begin{proof}
We utilize Theorem~\ref{thm:9n} when $n = 11$. The remainder of the proof is similar to the proof of Corollary~\ref{cor:3sqrt11}.
\end{proof}

\begin{corollary}\label{cor:27sqrt3} We have
\begin{equation*}
\frac{\varphi(e^{-27\pi\sqrt{3}})}{\varphi(e^{-\pi\sqrt{3}})}  = \frac{1 + 2^{1/3}}{3^{7/4}}\Bigg(1 + 2^{1/3}(2^{1/3} - 1)\frac{2^{1/3} + 2^{2/3} + 3^{1/3}}{(9 - 2 \cdot 3^{4/3})^{1/3}}\Bigg).
\end{equation*}
\end{corollary}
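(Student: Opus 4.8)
The plan is to follow the template established for the other $9\sqrt{n}$ examples in this section, applying Theorem~\ref{thm:9n} to the value $n = 27$ and then bridging from the base point $e^{-3\pi\sqrt{3}}$ down to $e^{-\pi\sqrt{3}}$ by means of Corollary~\ref{cor:3sqrt3}. Since $\sqrt{27} = 3\sqrt{3}$, Theorem~\ref{thm:9n} with $n = 27$ gives
\[
\frac{\varphi(e^{-27\pi\sqrt{3}})}{\varphi(e^{-3\pi\sqrt{3}})} = \frac{1}{3}\bigg(1 + \frac{\sqrt{2}\,G_{243}}{G_{27}^3}\bigg),
\]
so that, after multiplying by the ratio $\varphi(e^{-3\pi\sqrt{3}})/\varphi(e^{-\pi\sqrt{3}}) = 3^{-3/4}(1 + 2^{1/3})$ supplied by Corollary~\ref{cor:3sqrt3}, the constant prefactors collapse to $\tfrac{1}{3}\cdot 3^{-3/4} = 3^{-7/4}$ and the factor $1 + 2^{1/3}$ is pulled out front, producing exactly the prefactor $(1 + 2^{1/3})/3^{7/4}$ in the statement.

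It then remains to identify the bracketed quantity, i.e. to show that
\[
\frac{\sqrt{2}\,G_{243}}{G_{27}^3} = 2^{1/3}(2^{1/3} - 1)\,\frac{2^{1/3} + 2^{2/3} + 3^{1/3}}{(9 - 2\cdot 3^{4/3})^{1/3}}.
\]
Using $G_{27} = 2^{1/12}(2^{1/3} - 1)^{-1/3}$ from~\eqref{G27}, one has $G_{27}^3 = 2^{1/4}(2^{1/3} - 1)^{-1}$, whence $\sqrt{2}/G_{27}^3 = 2^{1/4}(2^{1/3} - 1)$; the factor $2^{1/3} - 1$ matches both sides, and the required identity reduces to recording the single class invariant
\[
G_{243} = 2^{1/12}\,\frac{2^{1/3} + 2^{2/3} + 3^{1/3}}{(9 - 2\cdot 3^{4/3})^{1/3}}.
\]

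The value $G_{243}$ is the one new ingredient not already tabulated among the $G_n$ used above; I would take it from Watson~\cite{Watson3}, or equivalently compute it from $G_{27}$ via the $G_{9n}$ relation of~\cite[p.~205, Theorem~3.5]{BerndtV} (or the modular equation~\eqref{eq:modular9}), exactly as was done for $G_{99}$ in the proof of Corollary~\ref{cor:3sqrt11}. The main obstacle is therefore not conceptual but computational: the expression for $G_{243}$ carries nested cube roots, and verifying that $2^{1/4}(2^{1/3}-1)\,G_{243}$ simplifies to the stated closed form — in particular that $(9 - 2\cdot 3^{4/3})^{1/3}$ is the correct denominator radical — is a radical-simplification step that I would confirm with \emph{Mathematica}, in keeping with the treatment of Corollary~\ref{cor:3sqrt11}. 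Once this identity is in hand, substitution of the values of $G_{27}$ and $G_{243}$ and routine rearrangement complete the proof.
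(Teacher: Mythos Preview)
Your proposal is correct and follows essentially the same route as the paper: apply Theorem~\ref{thm:9n} with $n=27$, insert $G_{27}$ from~\eqref{G27} and the value of $G_{243}$ (which the paper takes directly from Watson~\cite[pp.~100--105]{Watson3} in exactly the form you wrote), and then multiply by the ratio from Corollary~\ref{cor:3sqrt3}. The paper does not need \emph{Mathematica} here since Watson already gives $G_{243}$ in the required closed form, so the radical-simplification obstacle you anticipated does not actually arise.
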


\begin{proof}
We apply Theorem~\ref{thm:9n} for $n = 27$. Recall that $G_{27}$ is provided by \eqref{G27}. From Watson's paper \cite[pp.~100--105]{Watson3}, we also know that
\begin{equation*}
G_{243} = \frac{2^{1/12}(2^{1/3} + 2^{2/3} + 3^{1/3})}{(9 - 2 \cdot 3^{4/3})^{1/3}}.
\end{equation*}
Using the value for $\varphi(e^{-3\pi\sqrt{3}})/\varphi(e^{-\pi\sqrt{3}})$ given in Corollary~\ref{cor:3sqrt3}, we finish the proof.
\end{proof}

An alternative form of $G_{243}$ can be obtained by using Theorem~\ref{thm:3n} with $n = 27$, together with \eqref{G27} and the values given in Corollaries~\ref{cor:3sqrt3} and~\ref{cor:9sqrt3}.

\begin{corollary}\label{cor:81} We have
\begin{align*}
&\frac{\varphi(e^{-81\pi})}{\varphi(e^{-\pi})}  = \frac{1 + (2(\sqrt{3} + 1))^{1/3}}{9}\\
&\,\times\left(1 + 2^{1/3}\left(\frac{(2(\sqrt{3} - 1))^{1/3} - 1}{(2(\sqrt{3} + 1))^{1/3} + 1}\right)^{8/9}\left(\frac{3(\sqrt{3} + 1)}{\sqrt{3} + 1 - \left(\frac{(2(\sqrt{3} + 1))^{1/3} + 1}{(2(\sqrt{3} - 1))^{1/3} - 1}\right)^{1/3}} - 2\right)^{1/3}\right).
\end{align*}
\end{corollary}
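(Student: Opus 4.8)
The plan is to reach $e^{-81\pi}$ from the base point $e^{-\pi}$ in two successive ratios of $9$, since a single application of Theorem~\ref{thm:9n} only links $\varphi(e^{-9\pi\sqrt n})$ to $\varphi(e^{-\pi\sqrt n})$. Taking $n=81$ in Theorem~\ref{thm:9n}, so that $\sqrt n=9$, $9\pi\sqrt n=81\pi$, and $\pi\sqrt n=9\pi$, gives
\begin{equation*}
\frac{\varphi(e^{-81\pi})}{\varphi(e^{-9\pi})}=\frac13\left(1+\frac{\sqrt2\,G_{729}}{G_{81}^3}\right).
\end{equation*}
I would then telescope
\begin{equation*}
\frac{\varphi(e^{-81\pi})}{\varphi(e^{-\pi})}=\frac{\varphi(e^{-81\pi})}{\varphi(e^{-9\pi})}\cdot\frac{\varphi(e^{-9\pi})}{\varphi(e^{-\pi})},
\end{equation*}
inserting the value of $\varphi(e^{-9\pi})/\varphi(e^{-\pi})$ from Corollary~\ref{cor:9}. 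This at once supplies the prefactor $\left(1+(2(\sqrt3+1))^{1/3}\right)/9$ in the statement and reduces the problem to identifying $1+\sqrt2\,G_{729}/G_{81}^3$ with the second factor.

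The class invariant $G_{81}$ is already recorded in \eqref{G81}, so the only new ingredient is $G_{729}$, which does not appear in Ramanujan's tables. I would obtain it exactly as $G_{99}$ was obtained in the proof of Corollary~\ref{cor:3sqrt11}: setting $n=81$ in the modular relation \eqref{eq:modular9},
\begin{equation*}
\left(1+\frac{2\sqrt2\,G_{729}^3}{G_{81}^9}\right)\left(1+\frac{2\sqrt2\,G_{81}^3}{G_{729}^9}\right)=9,
\end{equation*}
which, after clearing denominators, becomes a quartic in $G_{729}^3$ with coefficients in the field generated by $G_{81}$; solving it (or, equivalently, invoking the explicit formula for $G_{9n}$ in \cite[p.~205, Theorem~3.5]{BerndtV}) pins down $G_{729}$. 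Because $G_{81}$ is itself a nested radical in $\sqrt3$, the resulting $G_{729}$ is a fairly intricate algebraic number.

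The heart of the matter is the algebraic verification that the second factor equals $1+\sqrt2\,G_{729}/G_{81}^3$. Observing that the nested expression $\left(\frac{(2(\sqrt3+1))^{1/3}+1}{(2(\sqrt3-1))^{1/3}-1}\right)^{1/3}$ occurring in the statement is precisely $G_{81}$ by \eqref{G81}, and that $\left(\frac{(2(\sqrt3-1))^{1/3}-1}{(2(\sqrt3+1))^{1/3}+1}\right)^{8/9}=G_{81}^{-8/3}$, the claimed second factor is
\begin{equation*}
1+2^{1/3}G_{81}^{-8/3}\left(\frac{3(\sqrt3+1)}{\sqrt3+1-G_{81}}-2\right)^{1/3}.
\end{equation*}
Equating this with $1+\sqrt2\,G_{729}/G_{81}^3$, clearing the factor $G_{81}^3$, and cubing reduces everything to the single algebraic identity
\begin{equation*}
\sqrt2\,G_{729}^3=G_{81}\left(\frac{3(\sqrt3+1)}{\sqrt3+1-G_{81}}-2\right).
\end{equation*}
Verifying this identity, with the explicit $G_{81}$ and the $G_{729}$ determined above, is the main obstacle: it requires denesting several layers of cube and square roots in $\sqrt3$, and---just as in Corollary~\ref{cor:3sqrt11}---I would confirm it with \emph{Mathematica}. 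Once it is established, substitution back into the telescoped product yields the stated evaluation.
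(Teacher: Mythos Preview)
Your proposal is correct and follows essentially the same route as the paper: apply Theorem~\ref{thm:9n} at $n=81$, multiply by the value in Corollary~\ref{cor:9}, and determine $G_{729}$ from \eqref{eq:modular9} (or \cite[p.~205, Theorem~3.5]{BerndtV}) with a \emph{Mathematica} verification. Your key identity $\sqrt{2}\,G_{729}^3=G_{81}\bigl(\tfrac{3(\sqrt3+1)}{\sqrt3+1-G_{81}}-2\bigr)$ is exactly the paper's displayed formula for $G_{729}^3$ rewritten in terms of $G_{81}$, so the two arguments coincide.
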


\begin{proof} We apply Theorem~\ref{thm:9n} for $n = 81$. Recall the value $G_{81}$ from \eqref{G81}. Furthermore, by using \eqref{eq:modular9}, or by using the formula for $G_{9n}$ from~\cite[p.~205, Theorem~3.5]{BerndtV}, we can verify with \emph{Mathematica} that for $n = 81$,
\begin{equation*}
G_{729}^3 = \frac{\sqrt{2}}{2}\left(\frac{(2(\sqrt{3} + 1))^{1/3} + 1}{(2(\sqrt{3} - 1))^{1/3} - 1}\right)^{1/3}\left(\frac{3(\sqrt{3} + 1)}{\sqrt{3} + 1 - \left(\frac{(2(\sqrt{3} + 1))^{1/3} + 1}{(2(\sqrt{3} - 1))^{1/3} - 1}\right)^{1/3}} - 2\right).
\end{equation*}
Using the value $\varphi(e^{-9\pi})/\varphi(e^{-\pi})$ given in Corollary~\ref{cor:9}, we complete the proof.
\end{proof}

Another expression for $G_{729}$ can be obtained by using Theorem~\ref{thm:3n} with $n = 81$, together with \eqref{G81} and the values given in Corollaries~\ref{cor:9} and~\ref{cor:27}.

In this paper we focus on theta function values $\varphi(e^{-\pi\sqrt{n}})$, when $n$ is a positive integer or its reciprocal. By using modular equations and known class invariants, we can derive further examples. In Ramanujan's second notebook \cite[p.~227]{RamanujanEarlierII}, \cite[p.~210]{BerndtIII}, we find the value given in Corollary~\ref{cor:sqrt5s3}. This value is also given with a misprint corrected in~\cite[p.~150]{BorweinBrothersPiAGM}. We finish this section with one further example of this kind.

\begin{corollary}\label{cor:sqrt5s3} We have
\begin{equation*}
(\sqrt{5} + \sqrt{3})\varphi(e^{-\pi\sqrt{5}/3}) = (3 + \sqrt{3})\varphi(e^{-3\pi\sqrt{5}}).
\end{equation*}
\end{corollary}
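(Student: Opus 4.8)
The plan is to recognize that the two arguments $e^{-\pi\sqrt5/3}$ and $e^{-3\pi\sqrt5}$ are exactly the values $q=e^{-\pi\sqrt n}$ and $e^{-9\pi\sqrt n}$ for $n=\tfrac59$, since $\sqrt{5/9}=\sqrt5/3$ and $9\sqrt{5/9}=3\sqrt5$. Hence Theorem~\ref{thm:9n} with $n=\tfrac59$ applies verbatim and, using $9n=5$, yields
\begin{equation*}
\frac{\varphi(e^{-3\pi\sqrt5})}{\varphi(e^{-\pi\sqrt5/3})}=\frac13\left(1+\frac{\sqrt2\,G_5}{G_{5/9}^3}\right).
\end{equation*}
Thus the whole problem reduces to determining the single quantity $G_{5/9}$, since $G_5=(2+\sqrt5)^{1/12}$ is already known from \eqref{G5}.

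Because $\tfrac59$ is not an integer, $G_{5/9}$ is not tabulated; but by \eqref{eq:G} we have $G_{5/9}=G_{9/5}$, and $\tfrac95=9\cdot\tfrac15$, so $G_{9/5}$ is linked to $G_{1/5}=G_5$ by the nonic modular relation \eqref{eq:modular9}. Applying \eqref{eq:modular9} with $n=\tfrac15$ gives
\begin{equation*}
\left(1+\frac{2\sqrt2\,G_{9/5}^3}{G_5^9}\right)\left(1+\frac{2\sqrt2\,G_5^3}{G_{9/5}^9}\right)=9,
\end{equation*}
which, after inserting $G_5^3=(2+\sqrt5)^{1/4}$ and $G_5^9=(2+\sqrt5)^{3/4}$, becomes a single polynomial equation for $y:=G_{5/9}^3$. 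I would solve this (with \emph{Mathematica}, exactly as was done for $G_{99}$ and $G_{729}$), selecting the relevant positive real root, to obtain an explicit algebraic value of $G_{5/9}^3$.

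Substituting that value back into the above consequence of Theorem~\ref{thm:9n} and simplifying the nested radicals should collapse the right-hand side to $\tfrac{\sqrt5+\sqrt3}{3+\sqrt3}$; rearranging this identity is precisely the assertion $(\sqrt5+\sqrt3)\varphi(e^{-\pi\sqrt5/3})=(3+\sqrt3)\varphi(e^{-3\pi\sqrt5})$. A self-contained alternative, avoiding any table value for $G_{5/9}$, is to also invoke Theorem~\ref{thm:3n} with $n=\tfrac59$, which gives $\varphi(e^{-\pi\sqrt5})/\varphi(e^{-\pi\sqrt5/3})=\tfrac1{\sqrt3}(1+p^3)^{1/4}$ with $p:=\sqrt2\,G_5/G_{5/9}^3$; dividing the two formulas and using the already established value $\varphi(e^{-3\pi\sqrt5})/\varphi(e^{-\pi\sqrt5})=\tfrac1{\sqrt3}(1+2(\sqrt3+\sqrt5))^{1/4}$ from Corollary~\ref{cor:3sqrt5} cancels the $\varphi(e^{-\pi\sqrt5})$ factor and produces the single equation $(1+p)^4=(1+2\sqrt3+2\sqrt5)(1+p^3)$ for $p$ alone. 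I expect the main obstacle to be purely computational: solving this quartic and, crucially, verifying that the correct real root is the one giving $1+p=3(\sqrt5+\sqrt3)/(3+\sqrt3)$, together with the attendant simplification of the surds in $\sqrt3$, $\sqrt5$, and $\sqrt{15}$.
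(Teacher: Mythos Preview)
Your approach is essentially the paper's: apply Theorem~\ref{thm:9n} with $n=5/9$ and reduce everything to the value of $\sqrt{2}\,G_5/G_{5/9}^3$. The one difference is that you assume $G_{5/9}$ is not tabulated and propose to recover it via \eqref{eq:modular9} or via the quartic coming from Theorem~\ref{thm:3n} and Corollary~\ref{cor:3sqrt5}; in fact this class invariant is known, and the paper simply quotes it from \cite[p.~345]{BerndtV} (using \eqref{eq:G}):
\begin{equation*}
G_{5/9}=(2+\sqrt5)^{1/4}\bigg(\frac{\sqrt5-\sqrt3}{\sqrt2}\bigg)^{1/3}.
\end{equation*}
With this in hand one gets $\sqrt{2}\,G_5/G_{5/9}^3=(3-\sqrt5)/(\sqrt5-\sqrt3)$ directly, and the ratio $\tfrac13(1+p)$ simplifies to $(\sqrt5+\sqrt3)/(3+\sqrt3)$ in one line. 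Your proposed routes are correct but unnecessary once the tabulated value is used.
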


\begin{proof}
Apply Theorem~\ref{thm:9n} with $n = 5/9$. Recall the value \eqref{G5} of $G_5$. From \cite[p.~345]{BerndtV}, with \eqref{eq:G},
\begin{equation*}
G_{5/9} = (2 + \sqrt{5})^{1/4}\bigg(\frac{\sqrt{5} - \sqrt{3}}{\sqrt{2}}\bigg)^{1/3} = \bigg(\frac{\sqrt{5} + 1}{2}\bigg)^{3/4}\bigg(\frac{\sqrt{5} - \sqrt{3}}{\sqrt{2}}\bigg)^{1/3}.
\end{equation*}
Note that
\begin{equation*}
\bigg(\frac{1 + \sqrt{5}}{2}\bigg)^{-2} = \frac{3 - \sqrt{5}}{2} \quad \text{and} \quad \frac{\sqrt{2}G_5}{G_{5/9}^3} = \frac{3 - \sqrt{5}}{\sqrt{5} - \sqrt{3}}.
\end{equation*}
Thus, we find that
\begin{equation*}
\frac{\varphi(e^{-3\pi\sqrt{5}})}{\varphi(e^{-\pi\sqrt{5}/3})} = \frac{1}{3}\bigg(1 + \frac{3 - \sqrt{5}}{\sqrt{5} - \sqrt{3}}\bigg) = \frac{1}{3}\bigg(\frac{3 - \sqrt{3}}{\sqrt{5} - \sqrt{3}}\bigg) = \frac{\sqrt{5} + \sqrt{3}}{3 + \sqrt{3}}.\tag*{\qedhere}
\end{equation*}
\end{proof}

\begin{corollary}\label{cor:sqrt7s3} We have
\begin{equation*}
\frac{\varphi(e^{-3\pi\sqrt{7}})}{\varphi(e^{-\pi\sqrt{7}/3})} = \frac{1}{12}(3 + \sqrt{21})(1 + (2\sqrt{21} - 9)^{1/2}).
\end{equation*}
\end{corollary}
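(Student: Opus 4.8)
The plan is to apply Theorem~\ref{thm:9n} with $n = 7/9$. Since then $9n = 7$ and $\sqrt{n} = \sqrt{7}/3$, the left-hand side becomes exactly $\varphi(e^{-3\pi\sqrt{7}})/\varphi(e^{-\pi\sqrt{7}/3})$, while the right-hand side equals $\frac{1}{3}(1 + v)$, where I set
\begin{equation*}
v := \frac{\sqrt{2}\,G_7}{G_{7/9}^3}.
\end{equation*}
Using $G_7 = 2^{1/4}$ from \eqref{G7}, this is $v = 2^{3/4}/G_{7/9}^3$, so the whole problem reduces to determining the companion class invariant $G_{7/9}$, equivalently the number $v$.

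Following the device already used for $n = 11$ and $n = 81$, I would pin down $v$ from the degree-nine modular relation \eqref{eq:modular9} evaluated at $n = 7/9$ (so $9n = 7$). Substituting $G_7 = 2^{1/4}$, a short calculation shows that its two factors are precisely $1 + v^3$ and $1 + v^{-1}$, so \eqref{eq:modular9} collapses to $(1 + v^3)(1 + v^{-1}) = 9$, that is,
\begin{equation*}
v^4 + v^3 - 8v + 1 = 0.
\end{equation*}
The key observation is that this quartic is reducible over $\mathbb{Q}(\sqrt{21})$, splitting as
\begin{equation*}
\Big(\tfrac{5+\sqrt{21}}{2}\,v^2 - (4+\sqrt{21})v + 1\Big)\Big(\tfrac{5-\sqrt{21}}{2}\,v^2 - (4-\sqrt{21})v + 1\Big) = 0.
\end{equation*}
Only the first factor has real roots, its discriminant being $27 + 6\sqrt{21} > 0$ against $27 - 6\sqrt{21} < 0$ for the second, and choosing the root in the correct range (near $1.67$, since $G_{7/9} = G_{9/7}$ is only slightly larger than $G_1 = 1$) gives
\begin{equation*}
v = \frac{4 + \sqrt{21} + \sqrt{27 + 6\sqrt{21}}}{5 + \sqrt{21}}.
\end{equation*}

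It then remains to recast $v$ in the stated shape. Rationalizing by $5 - \sqrt{21}$ reduces the denominator to $4$ and the rational part of the numerator to $\sqrt{21} - 1$, while the surd part is handled by the denesting identity
\begin{equation*}
(5 - \sqrt{21})\sqrt{27 + 6\sqrt{21}} = (3 + \sqrt{21})\sqrt{2\sqrt{21} - 9},
\end{equation*}
which I would verify by squaring, both sides reducing to $6\sqrt{21} - 18$ (and both being positive). Hence $v = \frac{1}{4}\big(\sqrt{21} - 1 + (3 + \sqrt{21})\sqrt{2\sqrt{21} - 9}\big)$, and therefore
\begin{equation*}
\frac{1}{3}(1 + v) = \frac{(3 + \sqrt{21})\big(1 + \sqrt{2\sqrt{21} - 9}\big)}{12},
\end{equation*}
which is the asserted value. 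I expect the main obstacle to be the determination of $G_{7/9}$: one must recognize that $\mathbb{Q}(\sqrt{21})$ is the field over which the quartic factors, select the correct real root, and then carry out the denesting that produces exactly the radicals in the statement. As in the $n = 11$ and $n = 81$ cases, an alternative is to record $G_{7/9}$ outright (for instance via Theorem~3.5 of \cite{BerndtV}) and confirm the simplification with \emph{Mathematica}.
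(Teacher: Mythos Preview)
Your argument is correct. Both you and the paper begin by applying Theorem~\ref{thm:9n} with $n=7/9$, reducing everything to the quantity $v=\sqrt{2}\,G_7/G_{7/9}^3$. The difference lies in how $G_{7/9}$ is obtained: the paper simply quotes the known value of $G_{7/9}$ from \cite[p.~349]{BerndtV} (via \eqref{eq:G}), namely
\[
G_{7/9} = 2^{1/4}\Big(\tfrac{5+\sqrt{21}}{2}\Big)^{1/6}\Big(\sqrt{\tfrac{5+\sqrt{21}}{8}}-\sqrt{\tfrac{\sqrt{21}-3}{8}}\Big),
\]
and then verifies directly that $v=\tfrac14\big(\sqrt{21}-1+\sqrt{6(\sqrt{21}-3)}\big)$, which coincides with your expression since $(3+\sqrt{21})\sqrt{2\sqrt{21}-9}=\sqrt{6(\sqrt{21}-3)}$. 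You instead re-derive $v$ from scratch via the degree-nine modular relation \eqref{eq:modular9}, factoring the resulting quartic over $\mathbb{Q}(\sqrt{21})$ and selecting the correct root. Your route is more self-contained but requires spotting the factorization and the denesting; the paper's is shorter because the hard work is already packaged in the tabulated invariant. The alternative you mention at the end---just citing $G_{7/9}$---is precisely what the paper does.
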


\begin{proof}
We apply Theorem~\ref{thm:9n} for $n = 7/9$. Recall the value \eqref{G7} of $G_7$. From \cite[p.~349]{BerndtV}, with \eqref{eq:G},
\begin{equation*}
G_{7/9} = 2^{1/4}\bigg(\frac{5 + \sqrt{21}}{2}\bigg)^{1/6}\left(\sqrt{\frac{5 + \sqrt{21}}{8}} - \sqrt{\frac{\sqrt{21} - 3}{8}}\right).
\end{equation*}
Observe that
\begin{equation*}
\bigg(\frac{5 + \sqrt{21}}{2}\bigg)^{1/2}\left(\sqrt{\frac{5 + \sqrt{21}}{8}} - \sqrt{\frac{\sqrt{21} - 3}{8}}\right)^3 = \frac{1}{4}\bigg(\sqrt{21} - 1 + \sqrt{6(\sqrt{21} - 3)}\bigg).
\end{equation*}
Further elementary calculations and simplifications give the result.
\end{proof}

\section{A quintic analogue of Entry~\ref{entry:Ramanujan}}

In Theorem~\ref{thm:mainquintic} we present a quintic analogue of Entry~\ref{entry:Ramanujan}. Observe that the definition \eqref{def:5:u,v} in the theorem corresponds to \eqref{def:u,v,w}, and parts \eqref{5:1+u+v}--\eqref{5:r} are matching in both statements.

\begin{lemma}\label{lemma:quintic} For $|q| < 1$,
\begin{align}
\label{quintic-lemma-first} \varphi(q^{1/5}) - \varphi(q^5) &= 2q^{1/5}f(q^3, q^7) + 2q^{4/5}f(q, q^9),\\
\label{quintic-lemma-prod} \varphi^2(q) - \varphi^2(q^5) &= 4qf(q,q^9)f(q^3,q^7),\\
\label{quintic-lemma-sum}
32qf^5(q^3,q^7) + 32q^4f^5(q,q^9) &= \bigg(\frac{\varphi^2(q)}{\varphi(q^5)} - \varphi(q^5)\bigg)\{\varphi^4(q) - 4\varphi^2(q)\varphi^2(q^5) + 11\varphi^4(q^5)\}.
\end{align}
\end{lemma}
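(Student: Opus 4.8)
The plan is to prove the three identities in turn, viewing \eqref{quintic-lemma-first} as a $5$-dissection, \eqref{quintic-lemma-prod} as its squared average, and \eqref{quintic-lemma-sum} as a statement about fifth powers reached through the five ``conjugates'' of $\varphi(q^{1/5})$ under $q^{1/5}\mapsto\zeta q^{1/5}$, where $\zeta:=e^{2\pi i/5}$. Throughout I set $A:=\varphi(q^5)$, $B:=2q^{1/5}f(q^3,q^7)$, and $C:=2q^{4/5}f(q,q^9)$, so that the left side of \eqref{quintic-lemma-sum} is exactly $B^5+C^5$.

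For \eqref{quintic-lemma-first} I would split $\varphi(q)=\sum_n q^{n^2}$ by $n\bmod 5$. Since the squares mod $5$ are $\{0,1,4\}$, the classes $n\equiv0$, $n\equiv\pm1$, $n\equiv\pm2$ contribute $\varphi(q^{25})$, $2q\,f(q^{15},q^{35})$, and $2q^4 f(q^5,q^{45})$ respectively, the theta terms appearing by matching the exponent $\tfrac{s+t}{2}m^2+\tfrac{s-t}{2}m$ of $f(q^s,q^t)$; replacing $q$ by $q^{1/5}$ and using $f(a,b)=f(b,a)$ gives \eqref{quintic-lemma-first}, and the same dissection yields $\varphi(\zeta^j q^{1/5})=A+\zeta^j B+\zeta^{-j}C$ for all $j$. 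For \eqref{quintic-lemma-prod} I would average squares: since $\sum_{j=0}^4\zeta^{kj}$ is $5$ when $5\mid k$ and $0$ otherwise, one gets $\tfrac15\sum_{j=0}^4\varphi(\zeta^j q^{1/5})^2=A^2+2BC$, while the same average extracts the integral-exponent part of $\varphi^2(q^{1/5})$, namely $\sum_{5\mid m^2+n^2}q^{(m^2+n^2)/5}$. Identifying this with $2\varphi^2(q)-\varphi^2(q^5)$ — the classical relation $r_2(5N)+r_2(N/5)=2r_2(N)$ for the number $r_2$ of representations as a sum of two squares, which follows from multiplicativity of $r_2/4$ together with the fact that the nontrivial character mod $4$ takes the value $1$ at $5$ — gives $BC=\varphi^2(q)-\varphi^2(q^5)$, that is, \eqref{quintic-lemma-prod}.

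The heart of the matter is \eqref{quintic-lemma-sum}. The key point is that the product $\Pi:=\prod_{j=0}^4\varphi(\zeta^j q^{1/5})$ is invariant under $q^{1/5}\mapsto\zeta q^{1/5}$, hence is a genuine power series in $q$, and I would compute it two ways. Symbolically, $\Pi=\prod_j(A+\zeta^jB+\zeta^{-j}C)$; multiplying the $j$-th factor by $\zeta^j$ (their product over $j$ being $1$) converts it to $B\zeta^{2j}+A\zeta^j+C$, and factoring $B\xi^2+A\xi+C=B(\xi-r_1)(\xi-r_2)$ with $\prod_{\omega^5=1}(\omega-r)=1-r^5$, followed by Newton's formula for $r_1^5+r_2^5$, produces the symmetric-function identity
\begin{equation*}
\Pi=A^5+B^5+C^5-5A^3(BC)+5A(BC)^2.
\end{equation*}
Analytically, I would evaluate $\Pi$ in closed form from the Jacobi triple product $\varphi(x)=\prod_{n\ge1}(1-x^{2n})(1+x^{2n-1})^2$: interchanging the two products and using $\prod_{j=0}^4(1-\zeta^{mj}t)=1-t^5$ for $5\nmid m$ and $(1-t)^5$ for $5\mid m$ (and the $+$ analogue), the factors regroup according to the residue of $n$ mod $5$ into
\begin{equation*}
\Pi=\frac{(q^2;q^2)_\infty^{6}}{(q^{10};q^{10})_\infty}\cdot\frac{(-q;q^2)_\infty^{12}}{(-q^5;q^{10})_\infty^{2}}=\frac{\varphi^6(q)}{\varphi(q^5)},
\end{equation*}
the last equality from $\varphi(x)=(x^2;x^2)_\infty(-x;x^2)_\infty^2$.

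Equating the two evaluations of $\Pi$ and solving gives $B^5+C^5=\varphi^6(q)/\varphi(q^5)-A^5+5A^3(BC)-5A(BC)^2$. Substituting $A=\varphi(q^5)$ and the value $BC=\varphi^2(q)-\varphi^2(q^5)$ from \eqref{quintic-lemma-prod}, then writing $x=\varphi^2(q)$, $y=\varphi^2(q^5)$ and collecting everything over $A$, the resulting bracket becomes $x^3-5x^2y+15xy^2-11y^3=(x-y)(x^2-4xy+11y^2)$; since $(x-y)/A=\varphi^2(q)/\varphi(q^5)-\varphi(q^5)$, this is precisely the right side of \eqref{quintic-lemma-sum}. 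The only genuinely delicate step is the analytic evaluation $\Pi=\varphi^6(q)/\varphi(q^5)$ — correctly bookkeeping which infinite-product factors collapse to a fifth power (those whose index is divisible by $5$) and which to $1\mp t^5$; the rest is formal symmetric-function algebra, and the two supporting facts (the $5$-dissection and the representation-number identity) are classical.
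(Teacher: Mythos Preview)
Your argument is correct in all three parts. The $5$-dissection for \eqref{quintic-lemma-first} is standard; your derivation of \eqref{quintic-lemma-prod} via averaging the squares of the five conjugates and invoking $r_2(5N)+r_2(N/5)=2r_2(N)$ is sound (the terse justification ``$\chi_4(5)=1$'' is enough, since combined with multiplicativity of $\tfrac14 r_2$ it gives $r_2(5^a)=4(a+1)$ and hence the recurrence); and for \eqref{quintic-lemma-sum} the two evaluations of $\Pi=\prod_j\varphi(\zeta^jq^{1/5})$ both check out---the symmetric-function expansion yields $A^5+B^5+C^5-5A^3(BC)+5A(BC)^2$, the infinite-product regrouping yields $\varphi^6(q)/\varphi(q^5)$, and the polynomial $x^3-5x^2y+15xy^2-11y^3$ indeed factors as $(x-y)(x^2-4xy+11y^2)$.

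As for comparison with the paper: the paper does not supply its own argument for this lemma at all, but simply cites Entry~10(ii),(iv),(vii) of Chapter~19 in Berndt's \emph{Ramanujan's Notebooks, Part~III}. Your proof is therefore considerably more informative than what the paper provides. The root-of-unity--averaging technique you use (for both \eqref{quintic-lemma-prod} and \eqref{quintic-lemma-sum}) and the evaluation $\prod_j\varphi(\zeta^jq^{1/5})=\varphi^6(q)/\varphi(q^5)$ are in the spirit of Ramanujan's own methods and close to what one finds in Berndt's treatment, though Berndt's proof of \eqref{quintic-lemma-prod} proceeds more directly through product manipulations rather than via the representation-number identity for $r_2$. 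Your route has the merit of being uniform---all three identities flow from the single dissection $\varphi(\zeta^jq^{1/5})=A+\zeta^jB+\zeta^{-j}C$ by taking first, second, and fifth symmetric combinations.
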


\begin{proof}
Proofs of \eqref{quintic-lemma-first}--\eqref{quintic-lemma-sum} can be found in \cite[pp.~262--265, Entry~10 (ii), (iv), (vii)]{BerndtIII}.
\end{proof}

\begin{theorem}\label{thm:mainquintic} For $|q|<1$, let
\leqnomode
\begin{align}\label{def:5:u,v}
u := \df{2q^{1/5}f(q^3, q^7)}{\varphi(q^5)}\qquad \text{and} \qquad
v := \df{2q^{4/5}f(q, q^9)}{\varphi(q^5)}.\tag{0}
\end{align}
Then,
\begin{equation}\label{5:1+u+v}
\frac{\varphi(q^{1/5})}{\varphi(q^5)} = 1 + u + v,\tag{i}
\end{equation}
\begin{equation}\label{5:p}
p := uv = \frac{4q(-q; q^2)_\infty}{(-q^5; q^{10})_\infty^5} = \frac{4q\chi(q)}{\chi^5(q^5)},\tag{ii}
\end{equation}
and
\begin{equation}\label{5:phi-to-p}
\frac{\varphi^2(q)}{\varphi^2(q^5)} = 1 + p.\tag{iii}
\end{equation}
Furthermore,
\begin{equation}\label{5:u,v}
u = (\alpha p)^{1/5} \qquad\text{and}\qquad v = (\beta p)^{1/5},\tag{iv}
\end{equation}
where $\alpha$ and $\beta$ are the roots of the quadratic equation
\begin{equation}\label{5:r}
\xi^2 - ((p - 1)^2 + 7)\xi + p^3 = 0.\tag{v}
\end{equation}
\reqnomode
\end{theorem}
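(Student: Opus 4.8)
The plan is to prove the five parts in order, using Lemma~\ref{lemma:quintic} as the analytic backbone and mirroring the structure of the cubic case (Theorem~\ref{thm:maincubic}) as closely as possible. Part~\eqref{5:1+u+v} is immediate: divide the first identity \eqref{quintic-lemma-first} of the lemma through by $\varphi(q^5)$, move the $-\varphi(q^5)$ to the right, and recognize the two resulting quotients as exactly $u$ and $v$ from the definition \eqref{def:5:u,v}. This gives $\varphi(q^{1/5})/\varphi(q^5) = 1 + u + v$ with no further work.

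For part~\eqref{5:p}, I would first compute $p = uv$ directly from \eqref{def:5:u,v}, obtaining $p = 4q\, f(q^3,q^7)f(q,q^9)/\varphi^2(q^5)$. The clean way to reach the product form is to apply the Jacobi triple product identity \eqref{eq:Jacobi-triple-product} to each of $f(q^3,q^7)$, $f(q,q^9)$, and $\varphi(q^5) = f(q^5,q^5)$, exactly as was done in the cubic proof. Writing these out in terms of infinite products and cancelling common factors should collapse the quotient to $4q(-q;q^2)_\infty/(-q^5;q^{10})_\infty^5$, where the key combinatorial fact is the factorization $(-q;q^2)_\infty = (-q;q^{10})_\infty(-q^3;q^{10})_\infty(-q^5;q^{10})_\infty(-q^7;q^{10})_\infty(-q^9;q^{10})_\infty$ splitting the odd residues mod $10$. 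The final $\chi$-form then follows from the definition \eqref{def:chi}. The expected minor obstacle is bookkeeping: tracking which $(-q^j;q^{10})_\infty$ factors survive after multiplying the two $f$-expansions and dividing by $\varphi^2(q^5)$; this is routine but must be done carefully.

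Part~\eqref{5:phi-to-p} follows by combining the product identity \eqref{quintic-lemma-prod} with the definition of $p$. From \eqref{quintic-lemma-prod} we have $\varphi^2(q) - \varphi^2(q^5) = 4q f(q,q^9)f(q^3,q^7)$, and the right side is precisely $p\,\varphi^2(q^5)$ by the computation of $p$ above; dividing by $\varphi^2(q^5)$ yields $\varphi^2(q)/\varphi^2(q^5) - 1 = p$, which is \eqref{5:phi-to-p}.

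The genuinely substantive part, and the one I expect to be the main obstacle, is \eqref{5:u,v}--\eqref{5:r}: showing that $\alpha = u^5/p$ and $\beta = v^5/p$ are the two roots of $\xi^2 - ((p-1)^2+7)\xi + p^3 = 0$. By Vieta this amounts to the two symmetric-function identities $u^5 v^5/p^2 = p^3$ and $(u^5 + v^5)/p = (p-1)^2 + 7$. The first is trivial since $u^5v^5 = (uv)^5 = p^5$. The second is the crux: I must show $u^5 + v^5 = p((p-1)^2+7)$. Here the sum identity \eqref{quintic-lemma-sum} is exactly engineered for this purpose, since its left side $32qf^5(q^3,q^7) + 32q^4 f^5(q,q^9)$ equals $\varphi^5(q^5)(u^5+v^5)$ after expressing each $f^5$ via \eqref{def:5:u,v} (noting $u^5 = 32q\,f^5(q^3,q^7)/\varphi^5(q^5)$ and $v^5 = 32q^4 f^5(q,q^9)/\varphi^5(q^5)$). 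I would then rewrite the right side of \eqref{quintic-lemma-sum}, namely $(\varphi^2(q)/\varphi(q^5) - \varphi(q^5))\{\varphi^4(q) - 4\varphi^2(q)\varphi^2(q^5) + 11\varphi^4(q^5)\}$, entirely in terms of the ratio $\varphi^2(q)/\varphi^2(q^5) = 1 + p$ from \eqref{5:phi-to-p}, dividing through by $\varphi^5(q^5)$. The first factor becomes $(1+p) - 1 = p$, and the braced factor becomes $(1+p)^2 - 4(1+p) + 11 = p^2 - 2p + 8 = (p-1)^2 + 7$, giving $u^5 + v^5 = p((p-1)^2+7)$ precisely. The anticipated difficulty is purely in correctly normalizing the powers of $\varphi(q^5)$ and the factors of $q$ so that everything reduces to the single variable $p$; once that substitution is executed the quadratic \eqref{5:r} emerges immediately from Vieta's formulas, completing the proof.
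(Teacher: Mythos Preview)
Your proposal is correct and follows essentially the same route as the paper's own proof: part~\eqref{5:1+u+v} from \eqref{quintic-lemma-first}, part~\eqref{5:p} via the Jacobi triple product and the residue-class factorization of $(-q;q^2)_\infty$, part~\eqref{5:phi-to-p} from \eqref{quintic-lemma-prod}, and parts~\eqref{5:u,v}--\eqref{5:r} by setting $\alpha=u^5/p$, $\beta=v^5/p$ and verifying Vieta's relations using \eqref{quintic-lemma-sum} together with the substitution $\varphi^2(q)/\varphi^2(q^5)=1+p$. The only minor point you omit is the degenerate case $p=0$ (forcing $u=v=0$), which the paper disposes of in one line before dividing by $p$.
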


Theorem~\ref{thm:mainquintic} is a slightly rewritten version of Entry~11(i) in Chapter~19 of Ramanujan's second notebook~\cite[p.~234]{RamanujanEarlierII}, proved in \cite[pp.~265--268]{BerndtIII}. J.~M. and P.~B.~Borwein \cite[Theorem~1]{BorweinBrothersApproxPi} established a formula for $\varphi(q^{25})/\varphi(q)$ similar to~\eqref{5:1+u+v}.

\begin{proof}
Part \eqref{5:1+u+v} follows from \eqref{quintic-lemma-first} of Lemma~\ref{lemma:quintic} after rearrangement.

To prove \eqref{5:p}, we first employ the Jacobi triple product identity \eqref{eq:Jacobi-triple-product} to establish the representations,
\begin{align*}
f(q^3, q^7) &= (-q^3; q^{10})_\infty (-q^7; q^{10})_\infty (q^{10}; q^{10})_\infty,\\
f(q, q^9) &= (-q; q^{10})_\infty (-q^9; q^{10})_\infty (q^{10}; q^{10})_\infty,\\
\varphi(q^5) &= f(q^5, q^5) = (-q^5; q^{10})_\infty^2 (q^{10}; q^{10})_\infty.
\end{align*}
Thus,
\begin{equation}\label{eq:5:p-to-chi}
p = uv = \frac{4qf(q^3, q^7)f(q, q^9)}{\varphi^2(q^5)} = \frac{4q(-q^3; q^{10})_\infty (-q^7; q^{10})_\infty (-q; q^{10})_\infty (-q^9; q^{10})_\infty}{(-q^5; q^{10})_\infty^4}.
\end{equation}
Since
\begin{equation*}
(-q; q^2)_\infty = \prod_{k=1}^5 (-q^{2k - 1}; q^{10})_\infty,
\end{equation*}
using \eqref{eq:5:p-to-chi}, we arrive at
\begin{equation*}
p = \frac{4q(-q; q^2)_\infty}{(-q^5; q^{10})_\infty^5} = \frac{4q\chi(q)}{\chi^5(q^5)},
\end{equation*}
upon using the definition \eqref{def:chi} of $\chi(q)$. We have therefore proved \eqref{5:p}.

Using the definitions of $u$ and $v$ from \eqref{def:5:u,v}, and also \eqref{quintic-lemma-prod} of Lemma~\ref{lemma:quintic}, we deduce that
\begin{equation*}
p = uv = \frac{4qf(q^3, q^7)f(q, q^9)}{\varphi^2(q^5)} = \frac{\varphi^2(q) - \varphi^2(q^5)}{\varphi^2(q^5)} = \frac{\varphi^2(q)}{\varphi^2(q^5)} - 1,
\end{equation*}
which establishes \eqref{5:phi-to-p}.

To prove that \eqref{5:u,v} holds, we define $\alpha$ and $\beta$ by \eqref{5:u,v}, and then derive the coefficients in \eqref{5:r}. If $p = 0$, then $u = v = 0$. Suppose that $p \neq 0$. For the first degree term of \eqref{5:r}, using \eqref{5:u,v}, \eqref{5:phi-to-p}, and \eqref{quintic-lemma-sum} of Lemma~\ref{lemma:quintic}, we find that
\begin{align*}
\alpha + \beta &= \frac{u^5}{p} + \frac{v^5}{p} = \frac{32qf^5(q^3,q^7) + 32q^4f^5(q,q^9)}{\varphi^5(q^5) p}\\
&= \frac{\frac{\varphi^2(q)}{\varphi(q^5)} - \varphi(q^5)}{\varphi^5(q^5)\Big(\frac{\varphi^2(q)}{\varphi^2(q^5)} - 1\Big)}\big\{\varphi^4(q) - 4\varphi^2(q)\varphi^2(q^5) + 11\varphi^4(q^5)\big\}\\
&= \frac{1}{\varphi^4(q^5)}\big\{\varphi^4(q) - 4\varphi^2(q)\varphi^2(q^5) + 11\varphi^4(q^5)\big\}\\
&= \frac{\varphi^4(q)}{\varphi^4(q^5)} - 4\frac{\varphi^2(q)}{\varphi^2(q^5)} + 11\\
&= \bigg(\frac{\varphi^2(q)}{\varphi^2(q^5)} - 1\bigg)^2 - 2\bigg(\frac{\varphi^2(q)}{\varphi^2(q^5)} - 1\bigg) + 8\\
&= p^2 - 2p + 8\\
&= (p - 1)^2 + 7.
\end{align*}
For the constant term, using \eqref{5:p} and \eqref{5:u,v}, we immediately deduce that $\alpha\beta = p^3$. Alternatively, using \eqref{def:5:u,v}, \eqref{5:u,v}, \eqref{5:phi-to-p}, and \eqref{quintic-lemma-prod} of Lemma~\ref{lemma:quintic}, we arrive at
\begin{align*}
\alpha\beta &= \frac{(uv)^5}{p^2} = \frac{1024q^5f^5(q, q^9)f^5(q^3, q^7)}{\varphi^{10}(q^5)p^2} = \frac{\{\varphi^2(q) - \varphi^2(q^5)\}^5}{\varphi^{10}(q^5)\Big(\frac{\varphi^2(q)}{\varphi^2(q^5)} - 1\Big)^2}\\
&= \frac{\{\varphi^2(q) - \varphi^2(q^5)\}^3}{\varphi^6(q^5)} = \bigg(\frac{\varphi^2(q)}{\varphi^2(q^5)} - 1\bigg)^3 = p^3.
\end{align*}
Thus, $\alpha$ and $\beta$ are roots of the equation in \eqref{5:r}.
\end{proof}

\section{Examples of quintic identities}

To establish quintic examples, we need the values of pairs of class invariants $G_n$ and $G_{25n}$, for certain rational numbers $n$. In Ramanujan's list \cite[pp.~189--199]{BerndtV} of values for $G_n$, there are $6$ values of $G_n$, namely for $n = 1, 3, 7, 9, 13,$ and $49$, for which $G_{25n}$ is also given. In analogy with the examples in Section~\ref{section:cubic-examples}, in view of \eqref{eq:G}, we have added the values for $n = 1/5$ to the present list. In this section, we determine the values of $15$ ratios of theta function. Together with the values given in Theorem~\ref{thm:thetagamma}, all of them can be expressed in terms of gamma functions. The examples given in this section are summarized in Table~\ref{table:quintic}. As in Section~\ref{section:cubic-examples}, there may be more than one way to determine the value of a theta function using ideas we have developed. Such cases are marked by an asterisk in Table~\ref{table:quintic}.
 The last two lines of the table build on the two class invariants $G_{125}$ and $G_{625}$, obtained in Corollaries~\ref{cor:G125} and~\ref{cor:G625}, which were not given by Ramanujan.

Throughout this section, we use the definitions of Theorem~\ref{thm:mainquintic}.

\begin{table}[ht]
\caption{Overview of quintic examples}\label{table:quintic}
\centering
\begin{tabular}{| r | r | r | l | r | l |}
	\hline &&&&&\\[-1em]
	$n$ & $25n$ & $5\sqrt{n}$ & Ex. for Thm.~\ref{thm:5n} & $25\sqrt{n}$ & Ex. for Thm.~\ref{thm:25n}\\
	\hline &&&&&\\[-1em]
	$1/5$ & $5$ & $\sqrt{5}$ & \eqref{eq:transform}\textsuperscript{$\ast$} & $5\sqrt{5}$ & Corollary~\ref{cor:5sqrt5} \\
	$1$ & $25$ & $5$ & Corollary~\ref{cor:5} & $25$ & Corollary~\ref{cor:25} \\
	$3$ & $75$ & $5\sqrt{3}$ & Corollary~\ref{cor:5sqrt3} & $25\sqrt{3}$ & Corollary~\ref{cor:25sqrt3} \\
	$7$ & $175$ & $5\sqrt{7}$ & Corollary~\ref{cor:5sqrt7} & $25\sqrt{7}$ & Corollary~\ref{cor:25sqrt7} \\
	$9$ & $225$ & $15$ & Corollary~\ref{cor:5:15} & $75$ & Corollary~\ref{cor:75} \\
	$13$ & $325$ & $5\sqrt{13}$ & Corollary~\ref{cor:5sqrt13} & $25\sqrt{13}$ & Corollary~\ref{cor:25sqrt13} \\
	$49$ & $1225$ & $35$ & Corollary~\ref{cor:35} & $175$ & Corollary~\ref{cor:175} \\
	\hline &&&&&\\[-1em]
	$5$ & $125$ & $5\sqrt{5}$ & Corollary~\ref{cor:5sqrt5}\textsuperscript{$\ast$} & $25\sqrt{5}$ & Corollary~\ref{cor:25sqrt5} \\
	$25$ & $625$ & $25$ & Corollary~\ref{cor:25}\textsuperscript{$\ast$} & $125$ & Corollary~\ref{cor:125} \\
	\hline
\end{tabular}
\end{table}

\begin{lemma}\label{lemma:5:p} If $q = e^{-\pi\sqrt{n}}$, for a positive rational number $n$, then
\begin{equation*}
p = \frac{2 G_n}{G_{25n}^5}.
\end{equation*}
\end{lemma}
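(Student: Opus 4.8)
The plan is to mirror the proof of Lemma~\ref{lemma:3:p} exactly, replacing the cubic data by the quintic data supplied by Theorem~\ref{thm:mainquintic}. The starting point is the product representation of $p$ in Theorem~\ref{thm:mainquintic}\eqref{5:p}, which already expresses $p$ purely in terms of the function $\chi$:
\begin{equation*}
p = \frac{4q\chi(q)}{\chi^5(q^5)}.
\end{equation*}
Because the right-hand side involves only $\chi$ evaluated at $q$ and at $q^5$, the entire task reduces to converting these two values of $\chi$ into class invariants by means of the definition \eqref{def:G}.

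First I would record the two conversions. Setting $q = e^{-\pi\sqrt{n}}$ and using \eqref{def:G}, we have $\chi(q) = 2^{1/4}q^{1/24}G_n$. The key observation is that $q^5 = e^{-5\pi\sqrt{n}} = e^{-\pi\sqrt{25n}}$, so that $q^5$ is exactly the argument attached to the class invariant $G_{25n}$; applying \eqref{def:G} with $n$ replaced by $25n$ and $q$ replaced by $q^5$ then gives $\chi(q^5) = 2^{1/4}q^{5/24}G_{25n}$.

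The final step is to substitute these two expressions into the formula for $p$ and simplify. The power of $q$ in the numerator is $q\cdot q^{1/24} = q^{25/24}$, which cancels precisely against the factor $q^{25/24}$ produced by $\chi^5(q^5) = 2^{5/4}q^{25/24}G_{25n}^5$, so that all dependence on $q$ disappears. Collecting the powers of $2$, the numerator contributes $4\cdot 2^{1/4} = 2^{9/4}$ and the denominator contributes $2^{5/4}$, leaving a net factor of $2$. This yields $p = 2G_n/G_{25n}^5$, as claimed. I expect no genuine obstacle here: the only points requiring care are the bookkeeping of the exponents of $q$ and of $2$, and the identification $e^{-\pi\sqrt{25n}} = q^5$ that licenses attaching $G_{25n}$ to the argument $q^5$.
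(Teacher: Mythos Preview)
Your argument is correct and is exactly the approach of the paper: the paper's proof is the one-line substitution of \eqref{def:G} into the formula $p = 4q\chi(q)/\chi^5(q^5)$ from Theorem~\ref{thm:mainquintic}\eqref{5:p}, and you have simply written out the intermediate bookkeeping of the powers of $q$ and of $2$ that the paper leaves implicit.
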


\begin{proof}
From Theorem~\ref{thm:mainquintic}\eqref{5:p} and \eqref{def:G}, we have
\begin{equation*}
p = \frac{4q\chi(q)}{\chi^5(q^5)} = \frac{2 G_n}{G_{25n}^5}.\tag*{\qedhere}
\end{equation*}
\end{proof}

\begin{theorem}\label{thm:5n} If $n$ is a positive rational number, then
\begin{equation}\label{eq:5n}
\frac{\varphi(e^{-5\pi\sqrt{n}})}{\varphi(e^{-\pi\sqrt{n}})} = \frac{1}{\sqrt{5}}\bigg(1 + \frac{2 G_{25n}}{G_{n}^5}\bigg)^{1/2}.
\end{equation}
\end{theorem}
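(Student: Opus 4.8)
The plan is to follow the template of Theorem~\ref{thm:3n}, the cubic companion of the present statement. The natural starting point is the quotient identity of Theorem~\ref{thm:mainquintic}\eqref{5:phi-to-p}. Taking $q = e^{-\pi\sqrt{n}}$, so that $q^5 = e^{-5\pi\sqrt{n}}$, and using Lemma~\ref{lemma:5:p} to express $p$ through class invariants, I would first record
\begin{equation*}
\frac{\varphi^2(e^{-\pi\sqrt{n}})}{\varphi^2(e^{-5\pi\sqrt{n}})} = 1 + p = 1 + \frac{2 G_n}{G_{25n}^5}.
\end{equation*}
This already displays the correct structure, but with the roles of $G_n$ and $G_{25n}$ interchanged relative to the target and with the arguments of $\varphi$ in the wrong places.

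The decisive maneuver is a change of variable. Replacing $n$ by $1/(25n)$ everywhere and applying the reflection property \eqref{eq:G}, namely $G_{1/(25n)} = G_{25n}$ and $G_{1/n} = G_n$, converts the right-hand side into $1 + 2G_{25n}/G_n^5$, which is precisely the expression in the claim. Under the same substitution the theta arguments on the left become $e^{-\pi/\sqrt{25n}}$ and $e^{-\pi/\sqrt{n}}$.

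It then remains to return these reciprocal-argument values to $e^{-5\pi\sqrt{n}}$ and $e^{-\pi\sqrt{n}}$. Two applications of the transformation formula \eqref{eq:transform}, one with $n$ and one with $25n$, give $\varphi(e^{-\pi/\sqrt{n}}) = n^{1/4}\varphi(e^{-\pi\sqrt{n}})$ and $\varphi(e^{-\pi/\sqrt{25n}}) = (25n)^{1/4}\varphi(e^{-5\pi\sqrt{n}})$. The factor $n^{1/4}$ cancels between numerator and denominator, leaving an overall factor of $\sqrt{5}$, so the left-hand side becomes $5\,\varphi^2(e^{-5\pi\sqrt{n}})/\varphi^2(e^{-\pi\sqrt{n}})$. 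Dividing by $5$ and taking positive square roots then produces the asserted formula.

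I anticipate no real obstacle: all the tools (Theorem~\ref{thm:mainquintic}\eqref{5:phi-to-p}, Lemma~\ref{lemma:5:p}, \eqref{eq:G}, and \eqref{eq:transform}) are already in hand, and the argument reduces to careful bookkeeping of the exponents $n^{1/4}$ and $(25n)^{1/4}$. The one point deserving a moment's care is confirming that the two uses of \eqref{eq:transform} combine to yield exactly the factor $\sqrt{5}$, which is the source of the $1/\sqrt{5}$ in front of the final expression.
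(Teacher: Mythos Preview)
Your proposal is correct and follows essentially the same approach as the paper's own proof: start from Theorem~\ref{thm:mainquintic}\eqref{5:phi-to-p} with $q=e^{-\pi\sqrt{n}}$ and Lemma~\ref{lemma:5:p}, substitute $n\mapsto(25n)^{-1}$, invoke \eqref{eq:G} to swap $G_n$ and $G_{25n}$, and apply \eqref{eq:transform} twice to produce the factor~$5$ on the left. The paper presents this more tersely, but the route and the ingredients are identical.
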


Theorem~\ref{thm:5n} is stated in \cite[p.~339, (8.11)]{BerndtV}, \cite[(1.20)]{BerndtChanZhang4}.

\begin{proof} By using Theorem~\ref{thm:mainquintic}\eqref{5:phi-to-p} with $q = e^{-\pi\sqrt{n}}$, and by Lemma~\ref{lemma:5:p}, we find that
\begin{equation*}
\frac{\varphi^2(e^{-\pi\sqrt{n}})}{\varphi^2(e^{-5\pi\sqrt{n}})} = 1 + \frac{2G_n}{G_{25n}^5}.
\end{equation*}
By the substitution $n \mapsto (25n)^{-1}$, \eqref{eq:G}, and two applications of the transformation formula \eqref{eq:transform}, after rearrangement, we finish the proof.
\end{proof}

\begin{theorem}\label{thm:25n} If $n$ is a positive rational number, then
\begin{gather}
\frac{\varphi(e^{-25\pi\sqrt{n}})}{\varphi(e^{-\pi\sqrt{n}})} = \frac{1}{5}\bigg(1 +\bigg\{\frac{G_{25n}}{G_{n}^5}\bigg[\bigg(\frac{2 G_{25n}}{G_{n}^5} - 1\bigg)^2 + 7 + \bigg(4 - \frac{2 G_{25n}}{G_{n}^5}\bigg)\bigg(4 + \frac{4 G_{25n}^2}{G_{n}^{10}}\bigg)^{1/2}\bigg]\bigg\}^{1/5} \notag\\
+ \bigg\{\frac{G_{25n}}{G_{n}^5}\bigg[\bigg(\frac{2 G_{25n}}{G_{n}^5} - 1\bigg)^2 + 7 - \bigg(4 - \frac{2 G_{25n}}{G_{n}^5}\bigg)\bigg(4 + \frac{4 G_{25n}^2}{G_{n}^{10}}\bigg)^{1/2}\bigg]\bigg\}^{1/5}\bigg).\label{eq:25n}
 \end{gather}
\end{theorem}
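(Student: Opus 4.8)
The plan is to follow the route already used for Theorems~\ref{thm:5n} and~\ref{thm:9n}, but now invoking the full force of Theorem~\ref{thm:mainquintic}---parts \eqref{5:1+u+v}, \eqref{5:u,v}, and \eqref{5:r}---rather than only the quadratic relation \eqref{5:phi-to-p}. Setting $q = e^{-\pi\sqrt{n}}$ in \eqref{5:1+u+v} and using \eqref{5:u,v} gives
\begin{equation*}
\frac{\varphi(e^{-\pi\sqrt{n}/5})}{\varphi(e^{-5\pi\sqrt{n}})} = 1 + (\alpha p)^{1/5} + (\beta p)^{1/5},
\end{equation*}
where $\alpha,\beta$ are the roots of \eqref{5:r} and, by Lemma~\ref{lemma:5:p}, $p = 2G_n/G_{25n}^5$. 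Since $u$ and $v$ are positive for $0 < q < 1$, the quantities $\alpha p = u^5$ and $\beta p = v^5$ are positive, so the fifth roots are the real positive ones; this positivity persists under the substitution made below.

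First I would solve the quadratic \eqref{5:r} explicitly, obtaining
\begin{equation*}
\alpha p,\ \beta p = \frac{p}{2}\Big[(p-1)^2 + 7 \pm \sqrt{\big((p-1)^2+7\big)^2 - 4p^3}\,\Big].
\end{equation*}
The one genuinely non-routine step is to simplify the discriminant: a direct expansion gives
\begin{equation*}
\big((p-1)^2+7\big)^2 - 4p^3 = (4-p)^2(4+p^2),
\end{equation*}
so that $\sqrt{\big((p-1)^2+7\big)^2 - 4p^3} = (4-p)(4+p^2)^{1/2}$. The sign of $4-p$ is immaterial, since the two choices merely interchange $\alpha p$ and $\beta p$, and the final expression is symmetric in them. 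Everything from here is bookkeeping.

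Next I would apply the substitution $n \mapsto (25n)^{-1}$, under which $\sqrt{n} \mapsto 1/(5\sqrt{n})$, so that the left-hand side becomes $\varphi(e^{-\pi/(25\sqrt{n})})/\varphi(e^{-\pi/\sqrt{n}})$, while, by \eqref{eq:G}, the parameter transforms as $p \mapsto P := 2G_{25n}/G_n^5$. Since $P/2 = G_{25n}/G_n^5$ and $4 + P^2 = 4 + 4G_{25n}^2/G_n^{10}$, the two bracketed fifth-root arguments are then exactly those displayed in \eqref{eq:25n}. Finally, two applications of the transformation formula \eqref{eq:transform}, with $m = 625n$ and $m = n$ (using $(625n)^{1/4} = 5n^{1/4}$), turn $\varphi(e^{-\pi/(25\sqrt{n})})/\varphi(e^{-\pi/\sqrt{n}})$ into $5\,\varphi(e^{-25\pi\sqrt{n}})/\varphi(e^{-\pi\sqrt{n}})$, the factors $n^{1/4}$ cancelling; dividing by $5$ yields \eqref{eq:25n}. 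The main obstacle is the discriminant factorization above; once it is recognized, the substitution-and-transformation argument is identical to the ones already carried out for Theorems~\ref{thm:5n} and~\ref{thm:9n}.
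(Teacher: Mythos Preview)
Your proposal is correct and follows essentially the same route as the paper's proof: start from \eqref{5:1+u+v} with $q=e^{-\pi\sqrt{n}}$, solve the quadratic \eqref{5:r} with $p$ supplied by Lemma~\ref{lemma:5:p}, then substitute $n\mapsto(25n)^{-1}$ and invoke \eqref{eq:G} and \eqref{eq:transform}. You supply the explicit discriminant factorization $\big((p-1)^2+7\big)^2-4p^3=(4-p)^2(4+p^2)$ and the positivity justification for the fifth roots, both of which the paper leaves implicit.
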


With the notation of Theorem~\ref{thm:mainquintic}, if $0 < q < 1$, then, from \cite[Lemma~3.2]{Rebak}, $u > v$. Thus, the first fifth root on the right-hand side of \eqref{eq:25n} is equal to $u$, and the second is equal to $v$.
\medskip

For
\begin{equation}\label{eq:s(p)-p}
p = \frac{2G_{25n}}{G_{n}^5},
\end{equation}
we define
\begin{align}\label{def:s(p)}
s(p) &:= \frac{1}{5}\Big(1 + \Big\{\frac{p}{2}\Big((p-1)^2 + 7 + (4 - p)(4 + p^2)^{1/2}\Big)\Big\}^{1/5}\notag \\
&\qquad\qquad\qquad + \Big\{\frac{p}{2}\Big((p-1)^2 + 7 - (4 - p)(4 + p^2)^{1/2}\Big)\Big\}^{1/5}\Big).
\end{align}
Then, we can record Theorem~\ref{thm:25n} in the form
\begin{equation}\label{eq:25n-s(p)}
\frac{\varphi(e^{-25\pi\sqrt{n}})}{\varphi(e^{-\pi\sqrt{n}})} = s(p).
\end{equation}

\begin{proof} First, we use the representation given in Theorem~\ref{thm:mainquintic}\eqref{5:1+u+v} with $q=e^{-\pi\sqrt{n}}$. Thus,
\begin{equation*}
\frac{\varphi(e^{-\pi\sqrt{n}/5})}{\varphi(e^{-5\pi\sqrt{n}})} = 1 +u +v.
\end{equation*}
Then, we solve the quadratic equation in Theorem~\ref{thm:mainquintic}\eqref{5:r}, where we recall that $p$ is given by Lemma~\ref{lemma:5:p}. Next, we use the expressions for $u$ and $v$ given in Theorem~\ref{thm:mainquintic}\eqref{5:u,v}. Lastly, we complete the proof by taking the same steps as in the end of the proof of Theorem~\ref{thm:5n}. By the substitution $n \mapsto (25n)^{-1}$, \eqref{eq:G}, and \eqref{eq:transform}, after rearrangement, we are done.
\end{proof}

\begin{corollary}\label{cor:5sqrt5}
We have
\begin{equation*}
\varphi(e^{-5\pi\sqrt{5}})
= 5^{-3/4}\varphi(e^{-\pi\sqrt{5}})\Bigg\{1 + \bigg(\frac{2(1 + \tan\frac{\pi}{5})}{1 - \sin\frac{\pi}{5}}\bigg)^{1/5} + \bigg(\frac{2(1 - \tan\frac{\pi}{5})}{1 + \sin\frac{\pi}{5}}\bigg)^{1/5}\Bigg\}.
\end{equation*}
\end{corollary}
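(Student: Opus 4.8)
The plan is to specialize the general evaluation \eqref{eq:25n-s(p)} of Theorem~\ref{thm:25n} to $n = 1/5$ and then recast the resulting algebraic quantity in the stated trigonometric form. First I would pin down the value of $p$ in \eqref{eq:s(p)-p}. Since $25n = 5$ and, by \eqref{eq:G}, $G_{1/5} = G_5$, we get $p = 2G_5/G_5^{5} = 2/G_5^{4}$. Using the value \eqref{G5}, namely $G_5^{4} = \tfrac12(1+\sqrt{5})$, this collapses to $p = 4/(1+\sqrt{5}) = \sqrt{5}-1$.

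Next I would substitute $p = \sqrt{5}-1$ into $s(p)$ as defined in \eqref{def:s(p)}. A short computation gives $(p-1)^2+7 = 16-4\sqrt{5}$, $4-p = 5-\sqrt{5}$, and $4+p^2 = 10-2\sqrt{5}$, so the two radicands of \eqref{def:s(p)} become
\begin{equation*}
\tfrac{p}{2}\big(16-4\sqrt{5}\pm(5-\sqrt{5})\sqrt{10-2\sqrt{5}}\big),
\end{equation*}
with the $+$ branch (the larger one, hence equal to $u$ by the remark following Theorem~\ref{thm:25n}) destined to match the first trigonometric factor. Because $n = 1/5$ yields $25\sqrt{n} = 5\sqrt{5}$ and $\sqrt{n} = 1/\sqrt{5}$, formula \eqref{eq:25n-s(p)} reads $\varphi(e^{-5\pi\sqrt{5}})/\varphi(e^{-\pi/\sqrt{5}}) = s(\sqrt{5}-1)$. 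Applying the transformation formula \eqref{eq:transform} with $n=5$, that is $\varphi(e^{-\pi/\sqrt{5}}) = 5^{1/4}\varphi(e^{-\pi\sqrt{5}})$, and absorbing $5^{1/4}$ into the leading $\tfrac15$ of $s$, produces the prefactor $5^{1/4-1} = 5^{-3/4}$ in front of the braced sum, exactly as claimed.

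The remaining, and main, task is the trigonometric identification: showing that the two radicands above equal $2(1+\tan\tfrac{\pi}{5})/(1-\sin\tfrac{\pi}{5})$ and $2(1-\tan\tfrac{\pi}{5})/(1+\sin\tfrac{\pi}{5})$, respectively. Here I would invoke the surd values $\cos\tfrac{\pi}{5} = \tfrac14(1+\sqrt{5})$ and $\sin\tfrac{\pi}{5} = \tfrac14\sqrt{10-2\sqrt{5}}$, which immediately deliver the clean reductions $\tfrac{p}{2} = 2\cos\tfrac{\pi}{5}-1$, $\sqrt{4+p^2} = 4\sin\tfrac{\pi}{5}$, and $4-p = \sqrt{5}\,p$. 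Writing each target ratio over a common denominator, e.g. $2(\cos\tfrac{\pi}{5}+\sin\tfrac{\pi}{5})/\big(\cos\tfrac{\pi}{5}(1-\sin\tfrac{\pi}{5})\big)$, reduces the claim to a polynomial identity in $c = \cos\tfrac{\pi}{5}$ and $s = \sin\tfrac{\pi}{5}$, which I would close using the minimal-polynomial relations $4c^2 = 2c+1$ and $s^2 = 1-c^2$ to eliminate all quadratic terms. I expect this surd manipulation to be the only genuinely delicate point, the specialization of $p$ and the transformation-formula bookkeeping being routine. A reliable fallback, should the hand identity prove unwieldy, is to note that each radicand has the shape $a_0 + a_1 s$ with $a_0,a_1\in\mathbb{Q}(\sqrt{5})$; clearing the factors $1\mp\sin\tfrac{\pi}{5}$ and using $s^2\in\mathbb{Q}(\sqrt{5})$ turns the desired equality into a comparison of the two coefficients in the $\mathbb{Q}(\sqrt{5})$-basis $\{1,s\}$, each of which is then an elementary identity in $\mathbb{Q}(\sqrt{5})$.
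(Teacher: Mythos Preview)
Your proposal is correct and follows essentially the same route as the paper: specialize Theorem~\ref{thm:25n} at $n=1/5$, use \eqref{eq:G} and \eqref{G5} to get $p=\sqrt{5}-1$, invoke \eqref{eq:transform} for the $5^{-3/4}$ factor, and finish with the trigonometric identification via the surd values of $\sin\tfrac{\pi}{5}$ and $\cos\tfrac{\pi}{5}$. The paper organizes the last step slightly differently---writing the radicands as $2p\cdot\tfrac14\big((p-1)^2+7\pm(4-p)\sqrt{4+p^2}\big)$ and checking $2p=2/\cos\tfrac{\pi}{5}$ and $\tfrac14(\cdots)=(\cos\tfrac{\pi}{5}\pm\sin\tfrac{\pi}{5})/(1\mp\sin\tfrac{\pi}{5})$ directly---which is a bit cleaner than your minimal-polynomial reduction, but the content is the same.
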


Note that Corollary~\ref{cor:5sqrt5} is the quintic analogue of Theorem~\ref{thm:enigmatic}.

\begin{proof} We apply Theorem~\ref{thm:25n} in the form of \eqref{eq:25n-s(p)} with $n = 1/5$. Accordingly,
\begin{align*}
\frac{\varphi(e^{-5\sqrt5\pi})}{\varphi(e^{-\pi/\sqrt5})} = s(p),
\end{align*}
where by \eqref{def:s(p)}, and the transformation formula \eqref{eq:transform}, we see that
\begin{align}
    \varphi(e^{-5\sqrt5\pi}) =
    5^{-3/4}\varphi(e^{-\pi\sqrt5})
    &\Big(1 + \Big\{\frac{p}{2}\Big((p-1)^2 + 7 + (4 - p)(4 + p^2)^{1/2}\Big)\Big\}^{1/5}\notag \\
&\qquad+ \Big\{\frac{p}{2}\Big((p-1)^2 + 7 - (4 - p)(4 + p^2)^{1/2}\Big)\Big\}^{1/5}\Big),\label{eq:5sqrt5-transf}
\end{align}
and by \eqref{eq:s(p)-p}, with the value of $G_5$ given in \eqref{G5}, and by using \eqref{eq:G}, we find that
\begin{equation}\label{eq:5sqrt5-p}
p = \frac{2}{G_5^4} = \sqrt{5} - 1.
\end{equation}
Recall that
\begin{align}\label{eq:sin-cos-pi-5}
\sin\frac{\pi}{5} = \frac{\sqrt{10 - 2\sqrt{5}}}{4} \quad \text{and} \quad \cos\frac{\pi}{5} = \frac{\sqrt{5} + 1}{4}.
\end{align}
Now, employing \eqref{eq:5sqrt5-p} and \eqref{eq:sin-cos-pi-5}, we observe that
\begin{align}
\frac{1}{4}\Big((p-1)^2 + 7 \pm (4 - p)(4 + p^2)^{1/2}\Big) &= 4 - \sqrt{5} \pm \sqrt{25 - 10\sqrt{5}}\label{eq:5sqrt5-part-i-1}\\
&= \frac{\frac{\sqrt{10 - 2\sqrt{5}}}{4} \pm \frac{\sqrt{5} + 1}{4}}{1 \mp \frac{\sqrt{10 - 2\sqrt{5}}}{4}} = \frac{\cos\frac{\pi}{5} \pm \sin\frac{\pi}{5}}{1 \mp \sin\frac{\pi}{5}}\label{eq:5sqrt5-part-i-2}
\end{align}
and
\begin{equation}\label{eq:5sqrt5-part-ii}
2p = 2(\sqrt{5} - 1) = \frac{2}{\cos\frac{\pi}{5}}.
\end{equation}
Lastly, substitute \eqref{eq:5sqrt5-part-i-2} and \eqref{eq:5sqrt5-part-ii} into \eqref{eq:5sqrt5-transf} to complete the proof.
\end{proof}

We remark that by substituting the algebraic expressions in \eqref{eq:5sqrt5-part-i-1} and \eqref{eq:5sqrt5-part-ii} into \eqref{eq:5sqrt5-transf}, we find the alternative representation
\begin{multline*}
\varphi(e^{-5\pi\sqrt{5}}) = 5^{-3/4}\varphi(e^{-\pi\sqrt{5}})\Big(1 + \big\{2(\sqrt{5} - 1)(4 - \sqrt{5} + (25 - 10\sqrt{5})^{1/2})\big\}^{1/5}\\
+ \big\{2(\sqrt{5} - 1)(4 - \sqrt{5} - (25 - 10\sqrt{5})^{1/2})\big\}^{1/5}\Big).
\end{multline*}

\begin{corollary}\label{cor:5} We have
\begin{equation*}
\frac{\varphi(e^{-5\pi})}{\varphi(e^{-\pi})} = \frac{1}{(5\sqrt{5} - 10)^{1/2}}.
\end{equation*}
\end{corollary}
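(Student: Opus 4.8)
The plan is to mirror exactly the structure of the proof of Corollary~\ref{cor:3}, replacing the cubic machinery with its quintic counterpart. The key tool is Theorem~\ref{thm:5n}, which expresses the ratio $\varphi(e^{-5\pi\sqrt{n}})/\varphi(e^{-\pi\sqrt{n}})$ purely in terms of the class invariants $G_n$ and $G_{25n}$. First I would apply this theorem with $n = 1$, which immediately yields
\begin{equation*}
\frac{\varphi(e^{-5\pi})}{\varphi(e^{-\pi})} = \frac{1}{\sqrt{5}}\bigg(1 + \frac{2 G_{25}}{G_1^5}\bigg)^{1/2}.
\end{equation*}

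The second step is to substitute the two required invariant values. We have $G_1 = 1$ (as already used in the proof of Corollary~\ref{cor:3}), and the value of $G_{25}$ is furnished by \eqref{G25}, namely $G_{25} = (\sqrt{5}+1)/2$. Plugging these in gives
\begin{equation*}
\frac{\varphi(e^{-5\pi})}{\varphi(e^{-\pi})} = \frac{1}{\sqrt{5}}\big(1 + (\sqrt{5}+1)\big)^{1/2} = \frac{1}{\sqrt{5}}\,(2 + \sqrt{5})^{1/2} = \bigg(\frac{2+\sqrt{5}}{5}\bigg)^{1/2}.
\end{equation*}

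The only remaining task is the elementary algebraic rewriting needed to match the stated closed form $1/(5\sqrt{5}-10)^{1/2}$. This amounts to checking the single identity $(2+\sqrt{5})(5\sqrt{5}-10) = 10\sqrt{5} - 20 + 25 - 10\sqrt{5} = 5$, so that $(2+\sqrt{5})/5 = 1/(5\sqrt{5}-10)$ and hence the two forms agree after taking the square root. There is no genuine obstacle here: the proof is a direct specialization of Theorem~\ref{thm:5n} together with a rationalization of the surd, entirely parallel to the cubic case in Corollary~\ref{cor:3}.
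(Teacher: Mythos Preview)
Your proposal is correct and follows exactly the same approach as the paper: apply Theorem~\ref{thm:5n} with $n=1$, insert $G_1=1$ and $G_{25}=(\sqrt{5}+1)/2$ from \eqref{G25}, and then perform the elementary rationalization $(2+\sqrt{5})/5 = 1/(5\sqrt{5}-10)$ to reach the stated form.
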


\begin{proof} We apply Theorem~\ref{thm:5n} with $n = 1$. We know that $G_1=1$ and, by \eqref{G25}, the value of $G_{25}$.
Thus, from \eqref{eq:5n},
\begin{equation*}
\frac{\varphi(e^{-5\pi})}{\varphi(e^{-\pi})} =\sqrt{\frac{1+2G_{25}}{5}} = \sqrt{\frac{2 + \sqrt{5}}{5}} =
\frac{1}{(5\sqrt{5} - 10)^{1/2}}.\tag*{\qedhere}
\end{equation*}
\end{proof}

Corollary~\ref{cor:5} can also be found in both the first \cite[p.~285]{RamanujanEarlierI} and second \cite[p.~104]{RamanujanEarlierII} notebooks of Ramanujan, and he also proposed a related problem \cite{Ramanujan629}, \cite[pp.~32--33]{BerndtChoiKang}. In addition to the two solutions of Ramanujan's problem given in \cite{Ramanujan629}, Corollary~\ref{cor:5} was later established by Heng Huat Chan and the first author \cite{BerndtChan}, \cite[p.~327]{BerndtV}.

We remark that since $\tan(\pi/5) = (5 - 2\sqrt{5})^{1/2}$, we have the trigonometric form
\begin{equation}\label{eq:5-trig}
\frac{\varphi(e^{-5\pi})}{\varphi(e^{-\pi})} = \frac{1}{5^{1/4}\tan\frac{\pi}{5}}.
\end{equation}

\begin{corollary}\label{cor:25} We have
\begin{equation*}
\frac{\varphi(e^{-25\pi})}{\varphi(e^{-\pi})} = \frac{1}{5}\Big\{1 + \big(8\big(3\cos\tfrac{\pi}{5} + \sin\tfrac{\pi}{5}\big)\big)^{1/5} +  \big(8\big(3\cos\tfrac{\pi}{5} - \sin\tfrac{\pi}{5}\big)\big)^{1/5}\Big\}.
\end{equation*}
\end{corollary}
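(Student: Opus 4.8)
The plan is to apply Theorem~\ref{thm:25n} with $n = 1$, in the compact form \eqref{eq:25n-s(p)}, so that $\varphi(e^{-25\pi})/\varphi(e^{-\pi}) = s(p)$ with $s(p)$ given by \eqref{def:s(p)}. Since $\sqrt{n} = 1$ when $n = 1$, no appeal to the transformation formula \eqref{eq:transform} is needed here, in contrast to Corollary~\ref{cor:5sqrt5}. First I would determine $p$: by \eqref{eq:s(p)-p}, together with $G_1 = 1$ and the value $G_{25} = (\sqrt{5}+1)/2$ from \eqref{G25}, we obtain
\begin{equation*}
p = \frac{2 G_{25}}{G_1^5} = 2G_{25} = \sqrt{5} + 1.
\end{equation*}

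Next I would evaluate the algebraic ingredients of $s(p)$ at this value. A direct computation gives $(p-1)^2 + 7 = 12$, $4 - p = 3 - \sqrt{5}$, $4 + p^2 = 10 + 2\sqrt{5}$, and $p/2 = (\sqrt{5}+1)/2$, so that each of the two fifth-root arguments in \eqref{def:s(p)} takes the form
\begin{equation*}
\frac{\sqrt{5}+1}{2}\Big(12 \pm (3 - \sqrt{5})\sqrt{10 + 2\sqrt{5}}\Big).
\end{equation*}

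The remaining task is to recast this in the trigonometric form of the corollary using the values \eqref{eq:sin-cos-pi-5}. For the constant part, $\tfrac{1}{2}(\sqrt{5}+1)\cdot 12 = 6(\sqrt{5}+1) = 8 \cdot 3\cos\tfrac{\pi}{5}$, since $\cos\tfrac{\pi}{5} = (\sqrt{5}+1)/4$. For the radical part, the prefactor simplifies via $\tfrac{1}{2}(\sqrt{5}+1)(3-\sqrt{5}) = \sqrt{5} - 1$, leaving $(\sqrt{5}-1)\sqrt{10+2\sqrt{5}}$, which I claim equals $2\sqrt{10-2\sqrt{5}} = 8\sin\tfrac{\pi}{5}$. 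This is the one genuinely nontrivial step, though still routine: squaring both sides reduces it to $(6-2\sqrt{5})(10+2\sqrt{5}) = 4(10-2\sqrt{5})$, i.e. $40 - 8\sqrt{5} = 40 - 8\sqrt{5}$, after which positivity of both sides settles the sign.

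Substituting these two identities back into \eqref{def:s(p)} turns the fifth-root arguments into $8(3\cos\tfrac{\pi}{5} \pm \sin\tfrac{\pi}{5})$, with the $+$ sign feeding the first root and the $-$ sign the second, which completes the proof. I expect the nested-radical simplification $(\sqrt{5}-1)\sqrt{10+2\sqrt{5}} = 2\sqrt{10-2\sqrt{5}}$ to be the only step requiring genuine care; everything else is a mechanical substitution into the previously established general formula of Theorem~\ref{thm:25n}.
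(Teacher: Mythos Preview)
Your proposal is correct and follows essentially the same route as the paper: apply Theorem~\ref{thm:25n} with $n=1$, read off $p=2G_{25}=\sqrt{5}+1$ from \eqref{G25}, and reduce the fifth-root arguments to $8(3\cos\tfrac{\pi}{5}\pm\sin\tfrac{\pi}{5})$ using the values \eqref{eq:sin-cos-pi-5}. The only cosmetic difference is that the paper factors the argument as $2(1+\sqrt{5})(3\pm\sqrt{5-2\sqrt{5}})$ before expanding, whereas you simplify the radical via $(\sqrt{5}-1)\sqrt{10+2\sqrt{5}}=2\sqrt{10-2\sqrt{5}}$; the two manipulations are equivalent.
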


\begin{proof} We apply Theorem~\ref{thm:25n} with $n = 1$. By \eqref{eq:s(p)-p}--\eqref{eq:25n-s(p)}, we arrive at
\begin{align}
\frac{\varphi(e^{-25\pi})}{\varphi(e^{-\pi})} &= \frac{1}{5}\Big(1 + \Big\{\frac{p}{2}\Big((p-1)^2 + 7 + (4 - p)(4 + p^2)^{1/2}\Big)\Big\}^{1/5}\notag\\
&\qquad\qquad\qquad + \Big\{\frac{p}{2}\Big((p-1)^2 + 7 - (4 - p)(4 + p^2)^{1/2}\Big)\Big\}^{1/5}\Big)\label{eq:25-s(p)},
\end{align}
where, using the value $G_1 = 1$ and the value of $G_{25}$ from \eqref{G25}, we find that
\begin{equation*}
p = 2G_{25} = \sqrt{5} + 1.
\end{equation*}
Now, recall the values of $\sin(\pi/5)$ and $\cos(\pi/5)$ from \eqref{eq:sin-cos-pi-5}, and observe that
\begin{align}
\frac{p}{2}\Big((p-1)^2 + 7 \pm (4 - p)(4 + p^2)^{1/2}\Big)
&= 2(1 + \sqrt{5})\bigg(3 \pm \sqrt{5 - 2\sqrt{5}}\bigg)\label{eq:25-1}\\
&= 6(1 + \sqrt{5}) \pm 2\sqrt{10 - 2\sqrt{5}}\notag\\
&= 8\big(3\cos\tfrac{\pi}{5} \pm \sin\tfrac{\pi}{5}\big)\label{eq:25-2}.
\end{align}
Lastly, inserting \eqref{eq:25-2} into \eqref{eq:25-s(p)}, we complete the proof.
\end{proof}

We note that if we substitute the algebraic expression in \eqref{eq:25-1} into \eqref{eq:25-s(p)}, we find the alternative formulation
\begin{equation*}
\frac{\varphi(e^{-25\pi})}{\varphi(e^{-\pi})} = \frac{1}{5}\Big(1 + \big\{2(1 + \sqrt{5})(3 + (5 - 2\sqrt{5})^{1/2})\big\}^{1/5} + \big\{2(1 + \sqrt{5})(3 - (5 - 2\sqrt{5})^{1/2})\big\}^{1/5}\Big).
\end{equation*}

\begin{corollary}\label{cor:5sqrt3} We have
\begin{equation*}
\frac{\varphi(e^{-5\pi\sqrt{3}})}{\varphi(e^{-\pi\sqrt{3}})} = \frac{1}{\sqrt{5}}\left(1 + p\right)^{1/2},
\end{equation*}
with
\begin{equation}\label{5sqrt3p}
p = \frac{6}{\dfrac{\sqrt{5} + 1}{2}(10)^{1/3} + \dfrac{\sqrt{5} - 1}{2}4^{1/3} \cdot 5^{1/6} - \sqrt{5} - 1}.
\end{equation}
\end{corollary}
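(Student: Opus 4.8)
The plan is to apply Theorem~\ref{thm:5n} with $n = 3$, which immediately yields
\begin{equation*}
\frac{\varphi(e^{-5\pi\sqrt{3}})}{\varphi(e^{-\pi\sqrt{3}})} = \frac{1}{\sqrt{5}}\bigg(1 + \frac{2G_{75}}{G_3^5}\bigg)^{1/2},
\end{equation*}
so that the entire task reduces to identifying $p = 2G_{75}/G_3^5$ with the stated closed form. Since $G_3 = 2^{1/12}$ by \eqref{G3}, we have $G_3^5 = 2^{5/12}$, and hence $p = 2^{7/12}G_{75}$. It therefore remains only to substitute the value of the class invariant $G_{75}$ recorded in Ramanujan's list \cite{BerndtV} and to simplify.

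First I would retrieve $G_{75}$ from \cite{BerndtV}; its value is built from the golden-ratio units $\frac{\sqrt{5}+1}{2}$ and $\frac{\sqrt{5}-1}{2}$ together with cube- and sixth-root radicals. Forming $2^{7/12}G_{75}$ then produces an expression that I must show equals the reciprocal $6/D$, where
\begin{equation*}
D = \frac{\sqrt{5}+1}{2}(10)^{1/3} + \frac{\sqrt{5}-1}{2}4^{1/3}\cdot 5^{1/6} - \sqrt{5} - 1
\end{equation*}
is the prescribed denominator in \eqref{5sqrt3p}. I would verify this by clearing denominators and checking the resulting radical identity, using $4^{1/3}\cdot 5^{1/6} = 80^{1/6}$ and $(10)^{1/3} = 100^{1/6}$ to express all surds over a common sixth root, so that the golden-ratio coefficients combine cleanly.

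The main obstacle is precisely this algebraic simplification. Because $G_{75}$ carries nested radicals while the target form of $p$ is a single fraction with a cube-root-laden denominator, the equivalence is not apparent termwise; it rests on a nontrivial cancellation among the sixth-root terms after rationalization. As with the neighboring evaluations (for instance Corollaries~\ref{cor:3sqrt5} and~\ref{cor:5sqrt7}), this step is most reliably dispatched by symbolic computation, and I would confirm the identity with \emph{Mathematica} before recording the final denominator $D$ and leaving the answer in the form $\frac{1}{\sqrt{5}}(1+p)^{1/2}$ dictated by Theorem~\ref{thm:5n}.
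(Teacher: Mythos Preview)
Your approach is exactly the paper's: apply Theorem~\ref{thm:5n} with $n=3$, use $G_3=2^{1/12}$, and substitute the recorded value of $G_{75}$. The only misjudgment is the anticipated obstacle: the value of $G_{75}$ in \cite[pp.~192, 269]{BerndtV} is \emph{already} given as $G_{75} = 3\cdot 2^{5/12}/D$ with precisely the denominator $D$ appearing in \eqref{5sqrt3p}, so $p = 2G_{75}/G_3^5 = 2\cdot 3\cdot 2^{5/12}/(2^{5/12}D) = 6/D$ drops out immediately---no rationalization, sixth-root bookkeeping, or \emph{Mathematica} check is needed.
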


\begin{proof} We apply Theorem~\ref{thm:5n} with $n = 3$. Recall the value $G_3$ from \eqref{G3}. Furthermore, from \cite[pp.~192, 269]{BerndtV},
\begin{equation*}
G_{75} = \frac{3 \cdot 2^{5/12}}{\dfrac{\sqrt{5} + 1}{2}(10)^{1/3} + \dfrac{\sqrt{5} - 1}{2}4^{1/3} \cdot 5^{1/6} - \sqrt{5} - 1}.
\end{equation*}
Using these values we obtain the desired result.
\end{proof}

\begin{corollary}\label{cor:25sqrt3} We have
\begin{equation*}
\frac{\varphi(e^{-25\pi\sqrt{3}})}{\varphi(e^{-\pi\sqrt{3}})} = s(p),
\end{equation*}
where $p$ is defined in \eqref{5sqrt3p} and $s(p)$ is defined by \eqref{def:s(p)}.
\end{corollary}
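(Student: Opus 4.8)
The plan is to apply Theorem~\ref{thm:25n}, in its compact form \eqref{eq:25n-s(p)}, with $n = 3$. By \eqref{eq:s(p)-p} the governing parameter is $p = 2G_{75}/G_3^5$, and the key observation is that this is exactly the same quantity $p$ that already occurs in Corollary~\ref{cor:5sqrt3}: there, applying Theorem~\ref{thm:5n} with $n = 3$ produced $\varphi(e^{-5\pi\sqrt{3}})/\varphi(e^{-\pi\sqrt{3}}) = \tfrac{1}{\sqrt{5}}(1 + p)^{1/2}$ with the very same $p = 2G_{75}/G_3^5$. Hence no new class-invariant computation is needed; one simply recalls the value $G_3 = 2^{1/12}$ from \eqref{G3} together with the value of $G_{75}$ recorded in the proof of Corollary~\ref{cor:5sqrt3}.

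First I would substitute these two invariants into $p = 2G_{75}/G_3^5$. Since $G_3^5 = 2^{5/12}$ cancels the factor $2^{5/12}$ appearing in the numerator of $G_{75}$, the computation collapses to
\begin{equation*}
p = \frac{2G_{75}}{G_3^5} = \frac{6}{\dfrac{\sqrt{5}+1}{2}(10)^{1/3} + \dfrac{\sqrt{5}-1}{2}4^{1/3}\cdot 5^{1/6} - \sqrt{5} - 1},
\end{equation*}
which is precisely \eqref{5sqrt3p}. With this value of $p$ in hand, the conclusion $\varphi(e^{-25\pi\sqrt{3}})/\varphi(e^{-\pi\sqrt{3}}) = s(p)$, for $s(p)$ defined by \eqref{def:s(p)}, is immediate from \eqref{eq:25n-s(p)}.

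There is essentially no genuine obstacle in this corollary; it is the degree-$25$ companion of Corollary~\ref{cor:5sqrt3}, driven by the identical pair of class invariants $G_3$ and $G_{75}$. The only point worth flagging is that, unlike Corollaries~\ref{cor:5sqrt5} and~\ref{cor:25}, the parameter $p$ here does not reduce to a clean trigonometric expression; consequently, following the editorial policy of reporting the form produced by the governing theorem, I would leave the answer as $s(p)$ rather than attempt any further algebraic simplification of the two nested fifth roots in \eqref{def:s(p)}.
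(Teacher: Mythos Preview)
Your proposal is correct and follows essentially the same approach as the paper: apply Theorem~\ref{thm:25n} with $n=3$ and reuse the values of $G_3$ and $G_{75}$ from the proof of Corollary~\ref{cor:5sqrt3} to identify $p$ with \eqref{5sqrt3p}. The paper's own proof is simply the one-line remark that the argument is similar to that of Corollary~\ref{cor:5sqrt3}, which is exactly what you have spelled out.
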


\begin{proof}
We apply Theorem~\ref{thm:25n} with $n = 3$. The proof is similar to the proof of Corollary~\ref{cor:5sqrt3}.
\end{proof}

\begin{corollary}\label{cor:5sqrt7} We have
\begin{equation}\label{eq:5sqrt7}
\frac{\varphi(e^{-5\pi\sqrt{7}})}{\varphi(e^{-\pi\sqrt{7}})} = \frac{1}{\sqrt{5}}(1 + p)^{1/2},
\end{equation}
with
\begin{equation}\label{5sqrt7p}
p = \frac{3}{\dfrac{\sqrt{5} - 1}{2} + \bigg(\dfrac{5 - \sqrt{5}}{4}\bigg)^{1/3}\Big((3\sqrt{21} + 8 - 3\sqrt{5})^{1/3} - (3\sqrt{21} - 8 + 3\sqrt{5})^{1/3}\Big)}.
\end{equation}
\end{corollary}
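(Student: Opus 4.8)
The plan is to specialize Theorem~\ref{thm:5n} to $n = 7$. This immediately delivers the shape $\frac{\varphi(e^{-5\pi\sqrt{7}})}{\varphi(e^{-\pi\sqrt{7}})} = \frac{1}{\sqrt{5}}(1 + p)^{1/2}$ asserted in \eqref{eq:5sqrt7}, with $p = 2G_{175}/G_7^5$; it then remains only to identify this $p$ with the expression in \eqref{5sqrt7p}. Since $G_7 = 2^{1/4}$ by \eqref{G7}, we have $G_7^5 = 2^{5/4}$, whence $p = 2^{-1/4}G_{175}$.

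Next I would recall the value of $G_{175}$ from Ramanujan's table as recorded in \cite{BerndtV}; the pair $(G_7, G_{175})$ is one of the six pairs $(G_n, G_{25n})$ noted at the start of this section. Guided by the parallel computation in Corollary~\ref{cor:5sqrt3}, where $G_{75} = 3\cdot 2^{5/12}/D$ collapses, after multiplication by $2^{7/12} = 2G_3^{-5}$, to the clean form $p = 6/D$, I expect $G_{175}$ to be recorded as $3\cdot 2^{1/4}/D'$, with $D'$ the denominator displayed in \eqref{5sqrt7p}. The two conjugate cube roots $(3\sqrt{21}+8-3\sqrt{5})^{1/3}$ and $(3\sqrt{21}-8+3\sqrt{5})^{1/3}$ signal that the minimal polynomial governing $G_{175}$, or a suitable power of it, is an irreducible cubic, and that these radicals emerge from its Cardano solution.

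Once $G_{175} = 3\cdot 2^{1/4}/D'$ is in hand, the factor $2^{-1/4}$ cancels the $2^{1/4}$ and produces $p = 3/D'$, which is precisely \eqref{5sqrt7p}; substituting back then completes the proof.

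The main obstacle will be confirming that Ramanujan's recorded form of $G_{175}$ agrees with $3\cdot 2^{1/4}/D'$, a nested-radical simplification of the same character as those invoked, without full detail, in Corollaries~\ref{cor:5sqrt3} and~\ref{cor:3sqrt7}. I would discharge it by rationalizing the cube-root denominator and verifying the resulting polynomial identity, or, failing a clean hand computation, by a symbolic check in \emph{Mathematica}, exactly as was done for $G_{99}$ and $G_{729}$ in the proofs of Corollaries~\ref{cor:3sqrt11} and~\ref{cor:81}.
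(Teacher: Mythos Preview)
Your proposal is correct and matches the paper's proof exactly: apply Theorem~\ref{thm:5n} with $n=7$, use $G_7=2^{1/4}$ from \eqref{G7}, and quote the tabulated value of $G_{175}$ from \cite[pp.~195, 270]{BerndtV}, which is already recorded precisely as $3\cdot 2^{1/4}/D'$ with $D'$ the denominator of \eqref{5sqrt7p}. Consequently the ``main obstacle'' you anticipate does not arise---no nested-radical simplification or \emph{Mathematica} check is needed, since the factor $2^{1/4}$ cancels directly against $G_7^5=2^{5/4}$ to give $p=3/D'$.
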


\begin{proof} We apply Theorem~\ref{thm:5n} with $n = 7$. We recall the value of $G_{7}$ from \eqref{G7} as well as the value of $G_{175}$ from \cite[pp.~195, 270]{BerndtV}, namely,
\begin{equation*}
G_{175} = \frac{3 \cdot 2^{1/4}}{\dfrac{\sqrt{5} - 1}{2} + \bigg(\dfrac{5 - \sqrt{5}}{4}\bigg)^{1/3}\Big((3\sqrt{21} + 8 - 3\sqrt{5})^{1/3} - (3\sqrt{21} - 8 + 3\sqrt{5})^{1/3}\Big)}.
\end{equation*}
Using these values we obtain the desired result.
\end{proof}

Using the representation for $G_{175}$ given in \cite[p.~272, (8.13)]{BerndtV}, we can obtain another formula for \eqref{eq:5sqrt7}, with
\begin{align}
p = \frac{1}{3}\bigg(2 + \sqrt{5} + \bigg(\frac{5 + 2\sqrt{5}}{2}\bigg)^{1/3}\Big((17 + 3\sqrt{21})^{1/3} + (17 - 3\sqrt{21})^{1/3}\Big)\bigg)\label{5sqrt7palt}.
\end{align}

\begin{corollary}\label{cor:25sqrt7} We have
\begin{equation*}
\frac{\varphi(e^{-25\pi\sqrt{7}})}{\varphi(e^{-\pi\sqrt{7}})} = s(p),
\end{equation*}
where $p$ is defined in \eqref{5sqrt7p} or \eqref{5sqrt7palt}, and $s(p)$ is defined by \eqref{def:s(p)}.
\end{corollary}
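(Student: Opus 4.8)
The plan is to apply Theorem~\ref{thm:25n} directly with $n = 7$, using its compact reformulation \eqref{eq:25n-s(p)}, which states that $\varphi(e^{-25\pi\sqrt{n}})/\varphi(e^{-\pi\sqrt{n}}) = s(p)$, where $p = 2G_{25n}/G_n^5$ by \eqref{eq:s(p)-p} and $s(p)$ is the function defined in \eqref{def:s(p)}. The one thing that needs to be checked is that the argument $p$ to be inserted into $s$ is precisely the quantity $2G_{175}/G_7^5$, and that this is the same $p$ already computed and simplified in the proof of Corollary~\ref{cor:5sqrt7}.

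First I would specialize \eqref{eq:25n-s(p)} to $n = 7$, so that the left-hand side of the corollary equals $s(p)$ with $p = 2G_{175}/G_7^5$. Next I would observe that Theorem~\ref{thm:5n}, which underlies Corollary~\ref{cor:5sqrt7}, carries exactly the same value $p = 2G_{25n}/G_n^5$; hence for $n = 7$ the closed form of $p$ obtained there --- namely \eqref{5sqrt7p}, equivalently \eqref{5sqrt7palt} --- is immediately available. This simplification uses only $G_7 = 2^{1/4}$ from \eqref{G7} and the value of $G_{175}$ recorded in \cite[pp.~195, 270]{BerndtV}. Since $s(\cdot)$ depends on $p$ alone, substituting this $p$ into \eqref{def:s(p)} completes the evaluation, and the ordering of the two fifth roots (with $u > v$ for $0 < q < 1$) has already been incorporated into $s(p)$ via the remark following Theorem~\ref{thm:25n}.

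The only genuine point requiring care is a bookkeeping one: Lemma~\ref{lemma:5:p} gives $p = 2G_n/G_{25n}^5$, whereas the $p$ entering $s(p)$ in \eqref{eq:s(p)-p} is $2G_{25n}/G_n^5$, with the subscripts interchanged. This interchange is exactly the effect of the substitution $n \mapsto (25n)^{-1}$ together with \eqref{eq:G} that is performed at the end of the proofs of both Theorem~\ref{thm:5n} and Theorem~\ref{thm:25n}; once one confirms that the two theorems are therefore evaluated at the identical value of $p$, no computation beyond that of Corollary~\ref{cor:5sqrt7} remains, and the proof follows at once.
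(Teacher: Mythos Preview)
Your proposal is correct and follows essentially the same approach as the paper: the paper's proof simply reads ``We apply Theorem~\ref{thm:25n} with $n = 7$. The proof is similar to the proof of Corollary~\ref{cor:5sqrt7}.'' Your write-up expands on the bookkeeping (the $n\mapsto(25n)^{-1}$ substitution and the coincidence of the argument $p$ with that in Corollary~\ref{cor:5sqrt7}), but the underlying method is identical.
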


\begin{proof}
We apply Theorem~\ref{thm:25n} with $n = 7$. The proof is similar to the proof of Corollary~\ref{cor:5sqrt7}.
\end{proof}

Next, we give another representation for $\varphi(e^{-15\pi})/\varphi(e^{-\pi})$, given in Corollary~\ref{cor:3:15}.

\begin{corollary}\label{cor:5:15} We have
\begin{equation*}
\frac{\varphi(e^{-15\pi})}{\varphi(e^{-\pi})} = \frac{1}{\sqrt{5}(6\sqrt{3} - 9)^{1/4}}\bigg(1 + 2(2 - \sqrt{3})^{1/2}\bigg(\frac{1 + \sqrt{5}}{2}\bigg)\bigg(\frac{\sqrt{4 + \sqrt{15}} + (15)^{1/4}}{2}\bigg)\bigg)^{1/2}.
\end{equation*}
\end{corollary}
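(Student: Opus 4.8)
The plan is to apply Theorem~\ref{thm:5n} with $n = 9$ and then rebase the resulting quotient from $e^{-3\pi}$ to $e^{-\pi}$ using Corollary~\ref{cor:3}. Since $\sqrt{9} = 3$, Theorem~\ref{thm:5n} delivers
\begin{equation*}
\frac{\varphi(e^{-15\pi})}{\varphi(e^{-3\pi})} = \frac{1}{\sqrt{5}}\bigg(1 + \frac{2 G_{225}}{G_9^5}\bigg)^{1/2},
\end{equation*}
so the theorem does \emph{not} directly produce the quotient $\varphi(e^{-15\pi})/\varphi(e^{-\pi})$ sought in the statement. This mismatch of base point is the one structural wrinkle, and I would resolve it at the very end by multiplying through by $\varphi(e^{-3\pi})/\varphi(e^{-\pi}) = (6\sqrt{3} - 9)^{-1/4}$, the value furnished by Corollary~\ref{cor:3}. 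This supplies precisely the outer factor $1/(\sqrt{5}(6\sqrt{3} - 9)^{1/4})$ appearing in the corollary.

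The substantive work is the simplification of $2 G_{225}/G_9^5$. I would import $G_9 = ((1 + \sqrt{3})/\sqrt{2})^{1/3}$ from \eqref{G9} and the value
\begin{equation*}
G_{225} = \bigg(\frac{\sqrt{5} + 1}{2}\bigg)(2 + \sqrt{3})^{1/3}\bigg(\frac{\sqrt{4 + \sqrt{15}} + (15)^{1/4}}{2}\bigg),
\end{equation*}
already recorded in the proof of Corollary~\ref{cor:3:15}. The pivotal elementary identity is $G_9^6 = ((1 + \sqrt{3})/\sqrt{2})^2 = 2 + \sqrt{3}$, which simultaneously yields $G_9^2 = (2 + \sqrt{3})^{1/3}$ (so the cube-root factor in $G_{225}$ collapses to $G_9^2$) and $G_9^{-6} = 2 - \sqrt{3}$. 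Writing $G_9^{-5} = (2 - \sqrt{3})\,G_9$ and using $G_9^3 = (1 + \sqrt{3})/\sqrt{2}$, the quotient becomes
\begin{equation*}
\frac{2 G_{225}}{G_9^5} = 2\,(2 - \sqrt{3})\frac{1 + \sqrt{3}}{\sqrt{2}}\bigg(\frac{1 + \sqrt{5}}{2}\bigg)\bigg(\frac{\sqrt{4 + \sqrt{15}} + (15)^{1/4}}{2}\bigg).
\end{equation*}

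The final simplification I would carry out is $(2 - \sqrt{3})(1 + \sqrt{3})/\sqrt{2} = (2 - \sqrt{3})^{1/2}$, which follows because squaring the left side gives $\big((1 + \sqrt{3})^2/2\big)(2 - \sqrt{3})^2 = (2 + \sqrt{3})(2 - \sqrt{3})^2 = 2 - \sqrt{3}$. Substituting this back reproduces exactly the bracketed factor $1 + 2(2 - \sqrt{3})^{1/2}\big(\tfrac{1 + \sqrt{5}}{2}\big)\big(\tfrac{\sqrt{4 + \sqrt{15}} + (15)^{1/4}}{2}\big)$ in the statement, and multiplying by $(6\sqrt{3} - 9)^{-1/4}$ from Corollary~\ref{cor:3} completes the proof. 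I do not anticipate a genuine obstacle: once the values of $G_9$ and $G_{225}$ are in hand, everything reduces to real quadratic-surd arithmetic. The only point demanding care is tracking the base point $e^{-3\pi}$ versus $e^{-\pi}$; the telescoping factor is supplied cleanly by Corollary~\ref{cor:3}, and all the cube-root and square-root radicals match up thanks to the coincidences $G_9^6 = 2 + \sqrt{3}$ and $((1 + \sqrt{3})/\sqrt{2})^2 = 2 + \sqrt{3}$.
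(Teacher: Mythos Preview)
Your proposal is correct and follows essentially the same route as the paper: apply Theorem~\ref{thm:5n} with $n=9$, insert the known values of $G_9$ and $G_{225}$, simplify $2G_{225}/G_9^5$ to $2(2-\sqrt{3})^{1/2}\big(\tfrac{1+\sqrt{5}}{2}\big)\big(\tfrac{\sqrt{4+\sqrt{15}}+(15)^{1/4}}{2}\big)$, and then multiply by $\varphi(e^{-3\pi})/\varphi(e^{-\pi})$ from Corollary~\ref{cor:3}. Your algebraic simplification via $G_9^6=2+\sqrt{3}$ is equivalent to the paper's identity $\big(\tfrac{\sqrt{3}-1}{\sqrt{2}}\big)^{5/3}(2+\sqrt{3})^{1/3}=\sqrt{2-\sqrt{3}}$, just presented with a bit more detail.
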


\begin{proof} Apply Theorem~\ref{thm:5n} with $n = 9$. Hence,
\begin{equation}\label{ppp}
\frac{\varphi(e^{-15\pi})}{\varphi(e^{-3\pi})} = \frac{1}{\sqrt{5}}\bigg(1 + \frac{2 G_{225}}{G_{9}^5}\bigg)^{1/2}.
\end{equation}
Recall the value \eqref{G9} of $G_9$, and observe that
\begin{equation}\label{G9a}
G_9 = \bigg(\frac{1 + \sqrt{3}}{\sqrt{2}}\bigg)^{1/3} = \bigg(\frac{\sqrt{3} - 1}{\sqrt{2}}\bigg)^{-1/3}.
\end{equation}
Also, from \cite[p.~195]{BerndtV},
\begin{equation}\label{G225}
G_{225} = \bigg(\frac{1 + \sqrt{5}}{2}\bigg)(2 + \sqrt{3})^{1/3}\bigg(\frac{\sqrt{4 + \sqrt{15}} + (15)^{1/4}}{2}\bigg).
\end{equation}
Note that
\begin{equation}\label{G9G225}
\bigg(\frac{\sqrt{3} - 1}{\sqrt{2}}\bigg)^{5/3}(2 + \sqrt{3})^{1/3} = \sqrt{2 - \sqrt{3}}.
\end{equation}
Put \eqref{G9a} and \eqref{G225} into \eqref{ppp} and simplify with the aid of \eqref{G9G225}. Lastly, multiply both sides of \eqref{ppp} by \eqref{e3}.
The proof of Corollary~\ref{cor:5:15} is then complete.
\end{proof}

\begin{corollary}\label{cor:75} We have
\begin{equation*}
\frac{\varphi(e^{-75\pi})}{\varphi(e^{-\pi})} = \frac{s(p)}{(6\sqrt{3} - 9)^{1/4}},
\end{equation*}
where
\begin{equation*}
p = 2(2 - \sqrt{3})^{1/2}\bigg(\frac{1 + \sqrt{5}}{2}\bigg)\bigg(\frac{\sqrt{4 + \sqrt{15}} + (15)^{1/4}}{2}\bigg),
\end{equation*}
and $s(p)$ is defined by \eqref{def:s(p)}.
\end{corollary}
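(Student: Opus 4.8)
The plan is to invoke Theorem~\ref{thm:25n} with $n = 9$. Since $25\sqrt{9} = 75$ and $\sqrt{9} = 3$, the theorem in the form \eqref{eq:25n-s(p)} directly produces the ratio of $\varphi$ at $e^{-75\pi}$ and $e^{-3\pi}$, namely
\begin{equation*}
\frac{\varphi(e^{-75\pi})}{\varphi(e^{-3\pi})} = s(p), \qquad p = \frac{2G_{225}}{G_9^5},
\end{equation*}
with $s(p)$ as in \eqref{def:s(p)}. The base argument here is $e^{-3\pi}$ rather than the desired $e^{-\pi}$, so one extra rescaling step will be required.

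The next task is to verify that this $p = 2G_{225}/G_9^5$ agrees with the expression in the statement. This computation has in effect already been performed in the proof of Corollary~\ref{cor:5:15}: using the value of $G_9$ from \eqref{G9} (equivalently \eqref{G9a}), the value of $G_{225}$ from \eqref{G225}, and the simplifying identity \eqref{G9G225}, one finds
\begin{equation*}
\frac{2G_{225}}{G_9^5} = 2(2 - \sqrt{3})^{1/2}\bigg(\frac{1 + \sqrt{5}}{2}\bigg)\bigg(\frac{\sqrt{4 + \sqrt{15}} + (15)^{1/4}}{2}\bigg),
\end{equation*}
which is precisely the $p$ recorded in Corollary~\ref{cor:75}. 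I would therefore simply cite that simplification rather than repeat it.

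Finally, to convert the left-hand side from a ratio against $\varphi(e^{-3\pi})$ to one against $\varphi(e^{-\pi})$, I would multiply by the known value $\varphi(e^{-3\pi})/\varphi(e^{-\pi}) = (6\sqrt{3} - 9)^{-1/4}$ from Corollary~\ref{cor:3}. This gives
\begin{equation*}
\frac{\varphi(e^{-75\pi})}{\varphi(e^{-\pi})} = \frac{\varphi(e^{-75\pi})}{\varphi(e^{-3\pi})}\cdot\frac{\varphi(e^{-3\pi})}{\varphi(e^{-\pi})} = \frac{s(p)}{(6\sqrt{3} - 9)^{1/4}},
\end{equation*}
as claimed. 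I expect no serious obstacle here, since the algebraic reduction of $p$ is borrowed verbatim from Corollary~\ref{cor:5:15}; the only point that requires attention is the bookkeeping of the base argument, recognizing that Theorem~\ref{thm:25n} with $n=9$ yields $\varphi(e^{-75\pi})/\varphi(e^{-3\pi})$ and that one further multiplication by the evaluation in Corollary~\ref{cor:3} is needed to normalize against $\varphi(e^{-\pi})$.
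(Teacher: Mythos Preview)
Your proposal is correct and follows essentially the same approach as the paper: apply Theorem~\ref{thm:25n} with $n=9$, reuse the simplification of $2G_{225}/G_9^5$ from the proof of Corollary~\ref{cor:5:15}, and then multiply by the value of $\varphi(e^{-3\pi})/\varphi(e^{-\pi})$ from Corollary~\ref{cor:3}.
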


\begin{proof} We apply Theorem~\ref{thm:25n} for $n = 9$. The remainder of the proof follows along the same lines as the proof of Corollary~\ref{cor:5:15}.
\end{proof}

\begin{corollary}\label{cor:5sqrt13} We have
\begin{equation*}
\frac{\varphi(e^{-5\pi\sqrt{13}})}{\varphi(e^{-\pi\sqrt{13}})} = \frac{1}{\sqrt{5}}\bigg(1 + (\sqrt{13} - 3)t\bigg)^{1/2},
\end{equation*}
where
\begin{multline}\label{5sqrt13t}
t^3 + t^2\bigg(\frac{1 - \sqrt{13}}{2}\bigg)^{2} + t\bigg(\frac{1 + \sqrt{13}}{2}\bigg)^{2} + 1 =\\
\sqrt{5}\bigg\{t^3 - t^2\bigg(\frac{1 + \sqrt{13}}{2}\bigg) + t\bigg(\frac{1 - \sqrt{13}}{2}\bigg) - 1\bigg\}.
\end{multline}
\end{corollary}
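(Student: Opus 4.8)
The plan is to follow the template of the preceding corollaries: apply Theorem~\ref{thm:5n} with $n = 13$ and then substitute the relevant class invariants. This immediately gives
\begin{equation*}
\frac{\varphi(e^{-5\pi\sqrt{13}})}{\varphi(e^{-\pi\sqrt{13}})} = \frac{1}{\sqrt{5}}\bigg(1 + \frac{2G_{325}}{G_{13}^5}\bigg)^{1/2},
\end{equation*}
so everything reduces to understanding the quantity $2G_{325}/G_{13}^5$. From \eqref{G13} we have $G_{13}^4 = (\sqrt{13}+3)/2$, and hence $G_{13}^{-4} = (\sqrt{13}-3)/2$. Introducing the ratio $t := G_{325}/G_{13}$, I would rewrite $2G_{325}/G_{13}^5 = 2t\,G_{13}^{-4} = (\sqrt{13}-3)t$, which already matches the shape $1 + (\sqrt{13}-3)t$ appearing under the square root in the statement. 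Thus the only remaining task is to show that $t = G_{325}/G_{13}$ satisfies the cubic \eqref{5sqrt13t}.

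To produce this cubic, I would invoke the quintic analogue of the degree-nine relation \eqref{eq:modular9}, namely the modular equation
\begin{equation*}
\bigg(1 + \frac{2G_{25n}}{G_n^5}\bigg)\bigg(1 + \frac{2G_n}{G_{25n}^5}\bigg) = 5,
\end{equation*}
valid for every positive rational $n$; this is the $25$th degree companion of \eqref{eq:modular9}, consistent with $\varphi^2(q)/\varphi^2(q^5) = 1 + p$ and Lemma~\ref{lemma:5:p}. Setting $n = 13$ and using $G_{13}^{\pm 4} = (\sqrt{13}\pm 3)/2$, the two factors become $1 + (\sqrt{13}-3)t$ and $1 + (\sqrt{13}-3)t^{-5}$; clearing $t^5$ then yields the sextic
\begin{equation*}
t^6 - (\sqrt{13}+3)t^5 + (\sqrt{13}-3)t + 1 = 0.
\end{equation*}
Over $\mathbb{Q}(\sqrt{5},\sqrt{13})$ this sextic factors into two cubics interchanged by $\sqrt{5}\mapsto -\sqrt{5}$. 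A short computation shows that the product of the displayed cubic \eqref{5sqrt13t} with its $\sqrt{5}$-conjugate reproduces exactly this sextic; for instance, the two $t^5$-coefficients sum to $-(3+\sqrt{13})$ and the two constant terms multiply to $1$. Selecting the factor corresponding to the genuine positive value $t = G_{325}/G_{13}$ then gives \eqref{5sqrt13t}.

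The main obstacle is this middle step: establishing and correctly deploying the degree-$25$ modular relation and then isolating the right cubic. The product relation by itself only determines $t$ up to the sextic, so the decisive work is the factorization over $\mathbb{Q}(\sqrt{5},\sqrt{13})$ together with the selection of the arithmetically correct root; this is precisely where a computer algebra verification (as already used for $G_{99}$ and $G_{729}$ elsewhere in the paper) is convenient. Alternatively, starting directly from Ramanujan's and Berndt's explicit evaluation of $G_{325}$, one obtains the same cubic after substituting $t = G_{325}/G_{13}$ and simplifying the resulting nested-radical expression, trading the modular-equation argument for a purely algebraic reduction.
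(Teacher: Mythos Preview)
Your argument is correct but takes a longer route than the paper. The paper's proof is a one-line citation: in \cite[p.~196]{BerndtV} the invariant $G_{325}$ is already recorded in the form
\[
G_{325} = \bigg(\frac{3+\sqrt{13}}{2}\bigg)^{1/4} t,
\]
with $t$ satisfying exactly the cubic \eqref{5sqrt13t}. Since $G_{13} = ((3+\sqrt{13})/2)^{1/4}$, this says precisely that $t = G_{325}/G_{13}$, and your computation $2G_{325}/G_{13}^5 = (\sqrt{13}-3)t$ finishes the proof immediately. No modular equation, no sextic, no factorization over $\mathbb{Q}(\sqrt{5},\sqrt{13})$ is needed; the cubic is quoted, not derived.

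Your approach, by contrast, rederives the cubic from the degree-$25$ relation $(1+2G_{25n}/G_n^5)(1+2G_n/G_{25n}^5)=5$, which is indeed valid (it follows at once from Theorem~\ref{thm:5n} and the displayed identity in its proof), and then factors the resulting sextic. This is self-contained and explains \emph{why} the cubic has the shape it does, whereas the paper simply imports the answer from Ramanujan's tables. The trade-off is the extra algebraic work in the factorization and the root selection, which the paper avoids entirely. Your closing ``alternative'' is essentially the paper's actual proof, except that no simplification of nested radicals is required: the reference already presents $G_{325}$ in the desired form.
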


\begin{proof} We apply Theorem~\ref{thm:5n} with $n = 13$. Recall the value of $G_{13} $ from \eqref{G13}. Also, from \cite[p.~196]{BerndtV},
\begin{equation*}
G_{325} = \bigg(\frac{3 + \sqrt{13}}{2}\bigg)^{1/4}\,t,
\end{equation*}
where $t$ satisfies \eqref{5sqrt13t}. Using these values we obtain the desired result.
\end{proof}

\begin{corollary}\label{cor:25sqrt13} We have
\begin{equation*}
\frac{\varphi(e^{-25\pi\sqrt{13}})}{\varphi(e^{-\pi\sqrt{13}})} = s(p),
\end{equation*}
with
\begin{equation*}
p = (\sqrt{13} - 3)t,
\end{equation*}
where $t$ satisfies \eqref{5sqrt13t}, and $s(p)$ is defined by \eqref{def:s(p)}.
\end{corollary}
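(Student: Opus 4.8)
The plan is to apply Theorem~\ref{thm:25n}, in the compact form \eqref{eq:25n-s(p)}, with $n = 13$, which gives at once
\begin{equation*}
\frac{\varphi(e^{-25\pi\sqrt{13}})}{\varphi(e^{-\pi\sqrt{13}})} = s(p),
\end{equation*}
where, by \eqref{eq:s(p)-p}, the relevant parameter is $p = 2G_{325}/G_{13}^5$ and $s$ is the function defined in \eqref{def:s(p)}. Hence the entire content of the corollary reduces to evaluating this single quantity $p$ and checking that it equals $(\sqrt{13} - 3)t$, the same parameter that already appeared in Corollary~\ref{cor:5sqrt13}.

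First I would recall the two class invariants assembled in the proof of Corollary~\ref{cor:5sqrt13}: the value $G_{13} = ((\sqrt{13} + 3)/2)^{1/4}$ from \eqref{G13}, and the value $G_{325} = ((3 + \sqrt{13})/2)^{1/4}\,t$ from \cite[p.~196]{BerndtV}, where $t$ is the root of the cubic \eqref{5sqrt13t}. Substituting these into $p = 2G_{325}/G_{13}^5$ and noting that $G_{13}^5 = ((\sqrt{13} + 3)/2)^{5/4}$, the quarter-power factors cancel, leaving $p = 2t\,((\sqrt{13} + 3)/2)^{-1} = 4t/(\sqrt{13} + 3)$; rationalizing the denominator then yields $p = (\sqrt{13} - 3)t$, as claimed. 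With this $p$ in hand, the stated formula is precisely $s(p)$ from \eqref{def:s(p)}.

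This value of $p$ is consistent with Corollary~\ref{cor:5sqrt13}, where Theorem~\ref{thm:5n} produces $\varphi(e^{-5\pi\sqrt{13}})/\varphi(e^{-\pi\sqrt{13}}) = (1 + p)^{1/2}/\sqrt{5}$ with the identical $p = (\sqrt{13} - 3)t$; this is to be expected, since both corollaries are built from the same pair $(G_{13}, G_{325})$. I anticipate no serious obstacle here, only one point of care: the cubic \eqref{5sqrt13t} does not factor pleasantly over $\mathbb{Q}(\sqrt{5}, \sqrt{13})$, so $t$ cannot be eliminated in simple radicals, and the final answer must therefore be left implicitly in terms of this root $t$ rather than reduced to an explicit closed form.
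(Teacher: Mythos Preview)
Your proposal is correct and follows exactly the approach the paper uses: apply Theorem~\ref{thm:25n} with $n = 13$ and reuse the class invariants $G_{13}$ and $G_{325}$ from the proof of Corollary~\ref{cor:5sqrt13} to identify $p = 2G_{325}/G_{13}^5 = (\sqrt{13} - 3)t$. The only difference is that you have written out the short computation of $p$ explicitly, whereas the paper simply says the proof is similar to that of Corollary~\ref{cor:5sqrt13}.
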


\begin{proof}
We apply Theorem~\ref{thm:25n} with $n = 13$. The proof is similar to the proof of Corollary~\ref{cor:5sqrt13}.
\end{proof}

\begin{corollary}\label{cor:35} We have
\begin{equation*}
\frac{\varphi(e^{-35\pi})}{\varphi(e^{-\pi})} = \frac{1}{\sqrt{5}}\bigg(\frac{\sqrt{13 + \sqrt{7}} + \sqrt{7 + 3\sqrt{7}}}{14}(28)^{1/8}\bigg)^{1/2}(1+p)^{1/2},
\end{equation*}
where
\begin{align}
&\hspace{-0.3em}p = 2\bigg(\frac{1 + \sqrt{5}}{2}\bigg)(6 + \sqrt{35})^{1/4}\Bigg(\frac{\sqrt{4 + \sqrt{7}} - 7^{1/4}}{2}\Bigg)^{7/2}\notag\\
&\hspace{-0.3em}\times \left(\sqrt{\frac{43 + 15\sqrt{7} + (8 + 3\sqrt{7})\sqrt{10\sqrt{7}}}{8}} + \sqrt{\frac{35 + 15\sqrt{7} + (8 + 3\sqrt{7})\sqrt{10\sqrt{7}}}{8}}\right).\label{35}
\end{align}
\end{corollary}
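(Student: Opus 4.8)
The plan is to follow the template of Corollaries~\ref{cor:5sqrt7} and~\ref{cor:21}: the relevant index here is $n = 49$, so that $25n = 1225$ and $5\sqrt{n} = 35$. First I would invoke Theorem~\ref{thm:5n} with $n = 49$, obtaining
\begin{equation*}
\frac{\varphi(e^{-35\pi})}{\varphi(e^{-7\pi})} = \frac{1}{\sqrt{5}}\bigg(1 + \frac{2 G_{1225}}{G_{49}^5}\bigg)^{1/2},
\end{equation*}
so that $p = 2 G_{1225}/G_{49}^5$ is exactly the quantity appearing in the statement. The value of $G_{49}$ is already recorded in \eqref{G49}, and since $1225 = 25 \cdot 49$ is one of the six indices listed at the start of this section for which $G_{25n}$ appears in Ramanujan's table, the value $G_{1225}$ may be quoted from \cite[pp.~189--199]{BerndtV}.

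Second, I would substitute these two invariants and reduce $2 G_{1225}/G_{49}^5$ to the closed form \eqref{35}. By \eqref{G49} one has $G_{49}^{-1} = \tfrac{1}{2}(\sqrt{4 + \sqrt{7}} - 7^{1/4})$, so the denominator $G_{49}^5$ combines with the factor $G_{49}^{3/2}$ carried by $G_{1225}$ to produce the exponent $7/2$ visible in \eqref{35}; the factor $\tfrac{1}{2}(1 + \sqrt{5}) = G_{25}$ and the factor $(6 + \sqrt{35})^{1/4} = 2^{-1/4}(\sqrt{5} + \sqrt{7})^{1/2}$, where I use the identity $2(6 + \sqrt{35}) = (\sqrt{5} + \sqrt{7})^2$, are inherited directly from $G_{1225}$, and the remaining surd piece is rewritten as the large nested-radical factor displayed in \eqref{35}.

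Third, because Theorem~\ref{thm:5n} delivers the ratio relative to $\varphi(e^{-7\pi})$ rather than $\varphi(e^{-\pi})$, I would multiply through by $\varphi(e^{-7\pi})/\varphi(e^{-\pi})$. Taking the square root of the evaluation \eqref{e7} furnishes precisely the prefactor $\big(\tfrac{1}{14}(\sqrt{13 + \sqrt{7}} + \sqrt{7 + 3\sqrt{7}})(28)^{1/8}\big)^{1/2}$ recorded in the statement, and the proof is complete.

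I expect the main obstacle to be the second step: reducing $2 G_{1225}/G_{49}^5$ to the exact shape \eqref{35}. The tabulated expression for $G_{1225}$ carries deeply nested radicals, and matching them to the displayed double-surd factor requires several non-obvious simplifications (of the type $2(6 + \sqrt{35}) = (\sqrt{5} + \sqrt{7})^2$ used above). This is the step where a careful verification, by hand or with \emph{Mathematica} as elsewhere in the paper, is genuinely required; the first and third steps amount to bookkeeping with already-known values.
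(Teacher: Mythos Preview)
Your proposal is correct and follows exactly the paper's approach: apply Theorem~\ref{thm:5n} with $n=49$, insert the values of $G_{49}$ from \eqref{G49} and of $G_{1225}$ from \cite[p.~199]{BerndtV}, and then multiply by the known ratio \eqref{e7}. The simplification in your second step is actually milder than you fear: the large nested-radical factor and the factors $\tfrac{1+\sqrt5}{2}$ and $(6+\sqrt{35})^{1/4}$ pass straight from Ramanujan's $G_{1225}$ into \eqref{35}, and the only genuine reduction is $G_{49}^{3/2}\cdot G_{49}^{-5}=G_{49}^{-7/2}=\bigl(\tfrac{\sqrt{4+\sqrt7}-7^{1/4}}{2}\bigr)^{7/2}$.
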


Another representation for $\varphi(e^{-35\pi})/\varphi(e^{-\pi})$ is given in \cite[Theorem~6.5]{Rebak}.

\begin{proof} We apply Theorem~\ref{thm:5n} with $n = 49$. Recall the value of $G_{49} $ from \eqref{G49}. Also, from \cite[p.~199]{BerndtV},
\begin{multline*}
G_{1225} = \frac{1+\sqrt{5}}{2}(6+\sqrt{35})^{1/4}\bigg(\frac{7^{1/4} + \sqrt{4+\sqrt{7}}}{2}\bigg)^{3/2}\\
\times \left(\sqrt{\frac{43 + 15\sqrt{7} + (8 + 3\sqrt{7})\sqrt{10\sqrt{7}}}{8}} + \sqrt{\frac{35 + 15\sqrt{7} + (8 + 3\sqrt{7})\sqrt{10\sqrt{7}}}{8}}\right).
\end{multline*}
Substitute these values in \eqref{eq:25n}. If we combine the resulting identity with \eqref{e7}, we complete the proof.
\end{proof}

\begin{corollary}\label{cor:175} We have
\begin{equation*}
\frac{\varphi(e^{-175\pi})}{\varphi(e^{-\pi})} = \bigg(\frac{\sqrt{13 + \sqrt{7}} + \sqrt{7 + 3\sqrt{7}}}{14}(28)^{1/8}\bigg)^{1/2}s(p),
\end{equation*}
where $p$ is provided by \eqref{35}, and $s(p)$ is given by \eqref{def:s(p)}.
\end{corollary}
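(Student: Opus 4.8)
The plan is to parallel the proof of Corollary~\ref{cor:35}, with Theorem~\ref{thm:25n} in place of Theorem~\ref{thm:5n}. First I would apply Theorem~\ref{thm:25n} with $n = 49$. Since $25\sqrt{49} = 175$ and $\sqrt{49} = 7$, the formulation \eqref{eq:25n-s(p)} yields
\begin{equation*}
\frac{\varphi(e^{-175\pi})}{\varphi(e^{-7\pi})} = s(p), \qquad p = \frac{2G_{1225}}{G_{49}^5},
\end{equation*}
where the expression for $p$ is the one recorded in \eqref{eq:s(p)-p}.

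Next I would insert the class invariants. The value of $G_{49}$ is given in \eqref{G49}, and the value of $G_{1225}$ is taken from \cite[p.~199]{BerndtV}; these are precisely the two invariants already used in the proof of Corollary~\ref{cor:35}. Consequently the quotient $2G_{1225}/G_{49}^5$ collapses to the expression displayed in \eqref{35}, exactly as in that proof. In particular, writing $G_{49} = (\sqrt{4+\sqrt7}+7^{1/4})/2$ and $G_{49}^{-1} = (\sqrt{4+\sqrt7}-7^{1/4})/2$ from \eqref{G49}, the factor $G_{49}^{3/2}$ carried by $G_{1225}$ combines with $G_{49}^{-5}$ to produce the factor $\big((\sqrt{4+\sqrt7}-7^{1/4})/2\big)^{7/2}$ appearing in \eqref{35}; no simplification beyond that of Corollary~\ref{cor:35} is required.

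Finally, since $\sqrt{49} = 7$, Theorem~\ref{thm:25n} returns a ratio based at $e^{-7\pi}$ rather than at $e^{-\pi}$, so a single change of base remains. I would multiply by $\varphi(e^{-7\pi})/\varphi(e^{-\pi})$, which is the positive square root of \eqref{e7}, namely
\begin{equation*}
\frac{\varphi(e^{-7\pi})}{\varphi(e^{-\pi})} = \bigg(\frac{\sqrt{13 + \sqrt{7}} + \sqrt{7 + 3\sqrt{7}}}{14}(28)^{1/8}\bigg)^{1/2}.
\end{equation*}
Multiplying the two factors gives
\begin{equation*}
\frac{\varphi(e^{-175\pi})}{\varphi(e^{-\pi})} = \frac{\varphi(e^{-175\pi})}{\varphi(e^{-7\pi})}\cdot\frac{\varphi(e^{-7\pi})}{\varphi(e^{-\pi})} = \bigg(\frac{\sqrt{13 + \sqrt{7}} + \sqrt{7 + 3\sqrt{7}}}{14}(28)^{1/8}\bigg)^{1/2} s(p),
\end{equation*}
which is the claimed identity.

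I do not expect a genuine obstacle: the computation of $p$ is identical to that already carried out for Corollary~\ref{cor:35}, and the base change is one invocation of \eqref{e7}. The only structural difference from Corollary~\ref{cor:35} is that Theorem~\ref{thm:25n} replaces the elementary factor $\tfrac{1}{\sqrt5}(1+p)^{1/2}$ coming from Theorem~\ref{thm:5n} by the function $s(p)$ of \eqref{def:s(p)}, which encodes the solution of the quadratic \eqref{5:r}. Thus the most delicate point — verifying that $2G_{1225}/G_{49}^5$ reduces to \eqref{35} — has in effect already been discharged, and the present corollary contributes only the routine bookkeeping of substituting that $p$ into $s(p)$ and attaching the base-change factor.
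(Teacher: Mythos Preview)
Your proposal is correct and follows exactly the same approach as the paper, which simply says to apply Theorem~\ref{thm:25n} with $n=49$ and then proceed as in the proof of Corollary~\ref{cor:35}. You have merely spelled out the details that the paper leaves implicit: that $p=2G_{1225}/G_{49}^5$ reduces to \eqref{35} via the same class-invariant computation, and that the base change from $e^{-7\pi}$ to $e^{-\pi}$ is accomplished by \eqref{e7}.
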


\begin{proof} Apply Theorem~\ref{thm:25n} with $n = 49$. Now proceed as in the proof of Corollary~\ref{cor:35}.
\end{proof}

\begin{lemma}\label{lemma:G25n} If $n$ is a positive rational number, then
\begin{equation*}
G_{25n} = \frac{G_n^5}{2}\bigg(5\frac{\varphi^2(e^{-5\pi\sqrt{n}})}{\varphi^2(e^{-\pi\sqrt{n}})} - 1\bigg).
\end{equation*}
\end{lemma}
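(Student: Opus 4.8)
The plan is to solve Theorem~\ref{thm:5n} for $G_{25n}$, since that theorem already expresses the ratio $\varphi(e^{-5\pi\sqrt{n}})/\varphi(e^{-\pi\sqrt{n}})$ in terms of the pair $G_n$ and $G_{25n}$; the claimed formula is simply its inverted form.

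First I would record Theorem~\ref{thm:5n} as
\begin{equation*}
\frac{\varphi(e^{-5\pi\sqrt{n}})}{\varphi(e^{-\pi\sqrt{n}})} = \frac{1}{\sqrt{5}}\bigg(1 + \frac{2G_{25n}}{G_n^5}\bigg)^{1/2}.
\end{equation*}
Because both sides are positive real numbers, squaring is reversible and, after multiplying by $5$, yields
\begin{equation*}
5\,\frac{\varphi^2(e^{-5\pi\sqrt{n}})}{\varphi^2(e^{-\pi\sqrt{n}})} = 1 + \frac{2G_{25n}}{G_n^5}.
\end{equation*}

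Next I would isolate the class-invariant term by subtracting $1$ from both sides, obtaining $2G_{25n}/G_n^5 = 5\varphi^2(e^{-5\pi\sqrt{n}})/\varphi^2(e^{-\pi\sqrt{n}}) - 1$, and then multiply through by $G_n^5/2$ to arrive at the stated identity. Every step is an elementary algebraic manipulation valid for all positive rational $n$, since Theorem~\ref{thm:5n} holds for all such $n$.

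There is no genuine obstacle here: the lemma is logically equivalent to Theorem~\ref{thm:5n}, and the only content is the rearrangement. The point worth emphasizing is rather the \emph{purpose} of writing it in this inverted form, namely that it is the convenient shape for the subsequent evaluations, where one already knows (or can read off from Theorem~\ref{thm:5n}) the ratio $\varphi(e^{-5\pi\sqrt{n}})/\varphi(e^{-\pi\sqrt{n}})$ and wishes to extract a previously undetermined invariant $G_{25n}$, as in the computation of the values $G_{125}$ and $G_{625}$ alluded to in the text.
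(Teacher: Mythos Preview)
Your proposal is correct and follows exactly the same approach as the paper, which simply states that the lemma ``is a direct corollary of Theorem~\ref{thm:5n}.'' Your explicit rearrangement (square, multiply by $5$, subtract $1$, multiply by $G_n^5/2$) is precisely the content of that remark.
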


\begin{proof}
This is a direct corollary of Theorem~\ref{thm:5n}.
\end{proof}

\begin{corollary}\label{cor:G125} We have
\begin{multline*}
G_{125} = \frac12\bigg(\frac{11 + 5\sqrt{5}}{2}\bigg)^{1/4}\bigg(\frac{1}{\sqrt{5}}\bigg\{1 + \bigg(\frac{2(1 + \tan\frac{\pi}{5})}{1 - \sin\frac{\pi}{5}}\bigg)^{1/5} + \bigg(\frac{2(1 - \tan\frac{\pi}{5})}{1 + \sin\frac{\pi}{5}}\bigg)^{1/5}\bigg\}^2 - 1\bigg).
\end{multline*}
\end{corollary}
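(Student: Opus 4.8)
The plan is to read off Corollary~\ref{cor:G125} as the special case $n = 5$ of Lemma~\ref{lemma:G25n}, since $25n = 125$ exactly when $n = 5$. Substituting $n = 5$ there gives
\begin{equation*}
G_{125} = \frac{G_5^5}{2}\bigg(5\frac{\varphi^2(e^{-5\pi\sqrt{5}})}{\varphi^2(e^{-\pi\sqrt{5}})} - 1\bigg),
\end{equation*}
so the proof reduces to supplying two ingredients: the algebraic number $G_5^5$ and the theta-quotient $\varphi^2(e^{-5\pi\sqrt{5}})/\varphi^2(e^{-\pi\sqrt{5}})$. Both are already available in the excerpt.

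First I would evaluate $G_5^5$. From \eqref{G5} we have $G_5 = \big(\tfrac{1+\sqrt{5}}{2}\big)^{1/4}$, so $G_5^5 = \big(\tfrac{1+\sqrt{5}}{2}\big)^{5/4}$. Writing $\phi = \tfrac{1+\sqrt{5}}{2}$ and repeatedly applying $\phi^2 = \phi + 1$, a short computation of successive powers yields $\phi^3 = 2 + \sqrt{5}$, $\phi^4 = \tfrac{7+3\sqrt{5}}{2}$, and $\phi^5 = \tfrac{11+5\sqrt{5}}{2}$. Hence $G_5^5 = \big(\tfrac{11+5\sqrt{5}}{2}\big)^{1/4}$, which is precisely the prefactor appearing in the statement.

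Next I would substitute the explicit evaluation of Corollary~\ref{cor:5sqrt5}, namely
\begin{equation*}
\frac{\varphi(e^{-5\pi\sqrt{5}})}{\varphi(e^{-\pi\sqrt{5}})} = 5^{-3/4}\Bigg\{1 + \bigg(\frac{2(1 + \tan\frac{\pi}{5})}{1 - \sin\frac{\pi}{5}}\bigg)^{1/5} + \bigg(\frac{2(1 - \tan\frac{\pi}{5})}{1 + \sin\frac{\pi}{5}}\bigg)^{1/5}\Bigg\}.
\end{equation*}
Squaring this identity and multiplying by $5$ gives $5\,\varphi^2(e^{-5\pi\sqrt{5}})/\varphi^2(e^{-\pi\sqrt{5}}) = 5\cdot 5^{-3/2}\{\cdots\}^2 = \tfrac{1}{\sqrt{5}}\{\cdots\}^2$, where $\{\cdots\}$ denotes the bracketed trigonometric sum. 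Inserting this expression together with the value $G_5^5 = \big(\tfrac{11+5\sqrt{5}}{2}\big)^{1/4}$ into the displayed formula for $G_{125}$ reproduces the claimed expression verbatim.

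There is no genuine obstacle here, as the result is a direct corollary of Lemma~\ref{lemma:G25n} combined with Corollary~\ref{cor:5sqrt5}; the only computation of any substance is the golden-ratio identity $\phi^5 = \tfrac{11+5\sqrt{5}}{2}$, which is elementary. The single point deserving a little care is the bookkeeping of the powers of $5$, so that the combination $5\cdot 5^{-3/2} = 5^{-1/2}$ produces exactly the factor $1/\sqrt{5}$ that appears inside the outer parentheses of the statement.
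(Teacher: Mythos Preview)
Your proposal is correct and follows exactly the paper's approach: apply Lemma~\ref{lemma:G25n} with $n=5$, insert $G_5$ from \eqref{G5}, and use Corollary~\ref{cor:5sqrt5} for the theta quotient. You have simply made explicit the simplification $G_5^5 = \big(\tfrac{11+5\sqrt{5}}{2}\big)^{1/4}$ and the bookkeeping $5\cdot 5^{-3/2}=1/\sqrt{5}$ that the paper summarizes with ``after simplification.''
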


\begin{proof} We use Lemma~\ref{lemma:G25n} with $n = 5$. Recall the value of $G_5$ from \eqref{G5}. Since from Corollary~\ref{cor:5sqrt5} we know the value of $\varphi(e^{-5\pi\sqrt{5}})/\varphi(e^{-\pi\sqrt{5}})$, after simplification, we obtain the given form.
\end{proof}

\begin{corollary}\label{cor:G625} We have
\begin{multline*}
G_{625} = \frac{11 + 5\sqrt{5}}{4}\bigg(\frac{\tan^2 \frac{\pi}{5}}{\sqrt{5}}\Big\{1 + \big(8\big(3\cos\tfrac{\pi}{5} + \sin\tfrac{\pi}{5}\big)\big)^{1/5} +  \big(8\big(3\cos\tfrac{\pi}{5} - \sin\tfrac{\pi}{5}\big)\big)^{1/5}\Big\}^2 - 1\bigg).
\end{multline*}
\end{corollary}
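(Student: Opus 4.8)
The plan is to apply Lemma~\ref{lemma:G25n} with $n = 25$, so that $\sqrt{n} = 5$ and
\begin{equation*}
G_{625} = \frac{G_{25}^5}{2}\bigg(5\frac{\varphi^2(e^{-25\pi})}{\varphi^2(e^{-5\pi})} - 1\bigg).
\end{equation*}
Two ingredients then remain: the value of $G_{25}^5$, and a closed form for the ratio $\varphi^2(e^{-25\pi})/\varphi^2(e^{-5\pi})$. This parallels the proof of Corollary~\ref{cor:G125}; the one new wrinkle is that the lemma now anchors the ratio at $e^{-5\pi}$ rather than at $e^{-\pi}$, so an extra conversion will be needed.

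For the prefactor, I would insert $G_{25} = \tfrac{1}{2}(1 + \sqrt{5})$ from \eqref{G25}. Since the golden ratio $\phi = \tfrac{1}{2}(1 + \sqrt{5})$ satisfies $\phi^5 = 5\phi + 3$, we get $G_{25}^5 = \tfrac{1}{2}(11 + 5\sqrt{5})$, whence $G_{25}^5/2 = \tfrac{1}{4}(11 + 5\sqrt{5})$, exactly the factor in the statement. The heart of the argument is converting $\varphi^2(e^{-25\pi})/\varphi^2(e^{-5\pi})$ into $\varphi^2(e^{-25\pi})/\varphi^2(e^{-\pi})$, for which Corollary~\ref{cor:25} supplies a closed form. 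I would factor
\begin{equation*}
\frac{\varphi^2(e^{-25\pi})}{\varphi^2(e^{-5\pi})} = \frac{\varphi^2(e^{-25\pi})}{\varphi^2(e^{-\pi})}\cdot\frac{\varphi^2(e^{-\pi})}{\varphi^2(e^{-5\pi})},
\end{equation*}
and evaluate the second factor with the trigonometric form \eqref{eq:5-trig} of Corollary~\ref{cor:5}, giving $\varphi^2(e^{-\pi})/\varphi^2(e^{-5\pi}) = \sqrt{5}\tan^2\tfrac{\pi}{5}$.

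Writing $B$ for the trigonometric bracket in the statement, Corollary~\ref{cor:25} reads $\varphi(e^{-25\pi})/\varphi(e^{-\pi}) = B/5$, so the product above collapses, on multiplying by $5$, to $5\cdot\tfrac{1}{25}B^2\cdot\sqrt{5}\tan^2\tfrac{\pi}{5} = \tfrac{1}{\sqrt{5}}B^2\tan^2\tfrac{\pi}{5}$, using $\sqrt{5}/5 = 1/\sqrt{5}$. Feeding this and the prefactor back into the lemma, with the $-1$ carried along unchanged, yields the asserted formula. Because every manipulation is elementary, the only real subtlety is tracking the exponents through the shift of base point from $e^{-5\pi}$ to $e^{-\pi}$; the bridging factor $\sqrt{5}\tan^2\tfrac{\pi}{5}$ is a concrete algebraic number via $\tan\tfrac{\pi}{5} = (5 - 2\sqrt{5})^{1/2}$, so no transcendental difficulty enters and the proof reduces to this verification.
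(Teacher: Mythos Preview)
Your proof is correct and follows essentially the same approach as the paper: apply Lemma~\ref{lemma:G25n} with $n=25$, use the value of $G_{25}$ from \eqref{G25}, and combine the values of $\varphi(e^{-\pi})/\varphi(e^{-5\pi})$ from \eqref{eq:5-trig} and $\varphi(e^{-25\pi})/\varphi(e^{-\pi})$ from Corollary~\ref{cor:25}. Your write-up in fact supplies more detail than the paper's own terse proof, including the explicit Fibonacci-style computation of $G_{25}^5$ and the bookkeeping for the factor $\sqrt{5}\tan^2\tfrac{\pi}{5}$.
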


\begin{proof} Apply Lemma~\ref{lemma:G25n} with $n = 25$. Recall the value of $G_{25}$ from \eqref{G25}. The remainder of the proof is straightforward. We need the values of
\begin{equation*}
 \frac{\varphi(e^{-\pi})}{\varphi(e^{-5\pi})} \qquad \text{and} \qquad
\frac{\varphi(e^{-25\pi})}{\varphi(e^{-\pi})}
\end{equation*}
from \eqref{eq:5-trig} and Corollary~\ref{cor:25}, respectively. After simplification, the corollary follows.
\end{proof}

Corollaries~\ref{cor:G125} and~\ref{cor:G625} were not given by Ramanujan in his extensive compendium of class invariants \cite[pp.~189--199]{BerndtV}. Furthermore, we remark that
\begin{equation*}
\frac{11 + 5\sqrt{5}}{4} = 16\cos^5\tfrac{\pi}{5}.
\end{equation*}

\begin{corollary}\label{cor:25sqrt5} We have
\begin{equation*}
\frac{\varphi(e^{-25\pi\sqrt{5}})}{\varphi(e^{-\pi\sqrt{5}})} = s(p),
\end{equation*}
where
\begin{equation*}
p = \frac{1}{\sqrt{5}}\bigg\{1 + \bigg(\frac{2(1 + \tan\frac{\pi}{5})}{1 - \sin\frac{\pi}{5}}\bigg)^{1/5} + \bigg(\frac{2(1 - \tan\frac{\pi}{5})}{1 + \sin\frac{\pi}{5}}\bigg)^{1/5}\bigg\}^2 - 1,
\end{equation*}
and $s(p)$ is given by \eqref{def:s(p)}.
\end{corollary}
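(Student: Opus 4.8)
The plan is to apply Theorem~\ref{thm:25n} directly with $n = 5$. This instantly gives
\begin{equation*}
\frac{\varphi(e^{-25\pi\sqrt{5}})}{\varphi(e^{-\pi\sqrt{5}})} = s(p),
\end{equation*}
with $s$ as in \eqref{def:s(p)}, so the whole task reduces to verifying that the associated parameter $p = 2G_{125}/G_5^5$ from \eqref{eq:s(p)-p} coincides with the trigonometric expression displayed in the corollary.

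The cleanest route is to bypass $G_{125}$ using Lemma~\ref{lemma:G25n} with $n = 5$, which rewrites this parameter as
\begin{equation*}
p = \frac{2G_{125}}{G_5^5} = 5\,\frac{\varphi^2(e^{-5\pi\sqrt{5}})}{\varphi^2(e^{-\pi\sqrt{5}})} - 1.
\end{equation*}
I would then substitute the evaluation $\varphi(e^{-5\pi\sqrt{5}})/\varphi(e^{-\pi\sqrt{5}}) = 5^{-3/4}\{1 + (\cdots)^{1/5} + (\cdots)^{1/5}\}$ from Corollary~\ref{cor:5sqrt5}; squaring and using $5\cdot(5^{-3/4})^2 = 5^{-1/2}$ turns the right-hand side into exactly the stated $p$. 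Equivalently, one may substitute $G_{125}$ from Corollary~\ref{cor:G125} and $G_5$ from \eqref{G5}: writing $\phi = (1+\sqrt{5})/2$ and using $\phi^5 = (11+5\sqrt{5})/2$ (the golden-ratio identity behind the remark $16\cos^5\frac{\pi}{5} = (11+5\sqrt{5})/4$ after Corollary~\ref{cor:G625}), the algebraic prefactor $(\phi^5)^{1/4}/\phi^{5/4}$ collapses to $1$, again leaving the stated $p$.

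I expect no genuine obstacle: once Theorem~\ref{thm:25n} is invoked, the argument is a single substitution together with the exponent identity $5\cdot 5^{-3/2} = 5^{-1/2}$. The only point requiring care is bookkeeping the powers of $5$ and checking that the bracketed sum from Corollary~\ref{cor:5sqrt5} is transcribed verbatim, so that $p$ matches the expression on which $s(p)$ in \eqref{def:s(p)} is evaluated.
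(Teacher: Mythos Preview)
Your proposal is correct and follows essentially the same approach as the paper: apply Theorem~\ref{thm:25n} with $n=5$ and then identify $p=2G_{125}/G_5^5$ with the stated trigonometric expression via Corollary~\ref{cor:G125} (equivalently, via Lemma~\ref{lemma:G25n} and Corollary~\ref{cor:5sqrt5}) together with the golden-ratio identity $\big(\tfrac{11+5\sqrt{5}}{2}\big)^{1/4}\big(\tfrac{1+\sqrt{5}}{2}\big)^{-5/4}=1$. Your first route through Lemma~\ref{lemma:G25n} is a slight streamlining that skips writing out $G_{125}$ explicitly, but the content is identical.
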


\begin{proof} Apply Theorem~\ref{thm:25n} with $n = 5$, and also use \eqref{def:s(p)}. According to \eqref{eq:s(p)-p}, to calculate the value of
\begin{equation*}
p = \frac{2G_{25n}}{G_{n}^5},
\end{equation*}
we need the value of $G_{125}$ given in Corollary~\ref{cor:G125}, and the value of $G_5$ provided in \eqref{G5}.
Note that
\begin{equation*}
\bigg(\frac{11 + 5\sqrt{5}}{2}\bigg)^{1/4}\bigg(\frac{1 + \sqrt{5}}{2}\bigg)^{-5/4} = 1.
\end{equation*}
The remainder of the proof is straightforward.
\end{proof}

\begin{corollary}\label{cor:125} We have
\begin{equation*}
\frac{\varphi(e^{-125\pi})}{\varphi(e^{-\pi})} = \frac{s(p)}{(5\sqrt{5} - 10)^{1/2}},
\end{equation*}
where
\begin{equation*}
p = \frac{\tan^2 \frac{\pi}{5}}{\sqrt{5}}\Big\{1 + \big(8\big(3\cos\tfrac{\pi}{5} + \sin\tfrac{\pi}{5}\big)\big)^{1/5} +  \big(8\big(3\cos\tfrac{\pi}{5} - \sin\tfrac{\pi}{5}\big)\big)^{1/5}\Big\}^2 - 1,
\end{equation*}
and $s(p)$ is given by \eqref{def:s(p)}.
\end{corollary}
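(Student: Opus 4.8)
The plan is to apply Theorem~\ref{thm:25n} with $n = 25$. Since $25\sqrt{25} = 125$ and $\pi\sqrt{25} = 5\pi$, the theorem in the form \eqref{eq:25n-s(p)} yields
\begin{equation*}
\frac{\varphi(e^{-125\pi})}{\varphi(e^{-5\pi})} = s(p), \qquad p = \frac{2G_{625}}{G_{25}^5},
\end{equation*}
where $s(p)$ is given by \eqref{def:s(p)} and the formula for $p$ comes from \eqref{eq:s(p)-p}. The required class invariants are thus $G_{25}$ from \eqref{G25} and $G_{625}$ from Corollary~\ref{cor:G625}.

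The one substantive step is to check that $p$ collapses to the stated expression. Writing $G_{625}$ as the product of the prefactor $\frac{11 + 5\sqrt{5}}{4}$ and the bracketed factor appearing in Corollary~\ref{cor:G625}, I would use $G_{25} = \frac{\sqrt{5}+1}{2}$ together with the remark $\frac{11 + 5\sqrt{5}}{4} = 16\cos^5\frac{\pi}{5}$ and the value $\cos\frac{\pi}{5} = \frac{\sqrt{5}+1}{4}$ from \eqref{eq:sin-cos-pi-5}. Since $\cos\frac{\pi}{5} = \tfrac12 G_{25}$, we get $16\cos^5\frac{\pi}{5} = \tfrac12 G_{25}^5$, so the prefactor of $G_{625}$ is exactly $\tfrac12 G_{25}^5$. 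Therefore $p = \frac{2G_{625}}{G_{25}^5}$ equals precisely the bracketed factor, which is the stated value of $p$. This cancellation is purely algebraic and parallels the one carried out for $G_{125}$ against $G_5^5$ inside the proofs of Corollaries~\ref{cor:G125} and~\ref{cor:25sqrt5}.

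Finally, to pass from $\varphi(e^{-125\pi})/\varphi(e^{-5\pi})$ to $\varphi(e^{-125\pi})/\varphi(e^{-\pi})$, I would multiply by $\varphi(e^{-5\pi})/\varphi(e^{-\pi}) = 1/(5\sqrt{5} - 10)^{1/2}$, supplied by Corollary~\ref{cor:5}. This introduces exactly the factor $(5\sqrt{5} - 10)^{-1/2}$ in the statement, completing the proof. I expect no genuine obstacle: the structure mirrors Corollaries~\ref{cor:75} and~\ref{cor:175}, where an analogous extra ratio was folded in, and the only nonroutine point is the prefactor cancellation identified above.
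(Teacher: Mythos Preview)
Your proposal is correct and follows essentially the same approach as the paper: apply Theorem~\ref{thm:25n} at $n=25$, use $G_{25}$ and $G_{625}$ to simplify $p=2G_{625}/G_{25}^5$, and then multiply by the value of $\varphi(e^{-5\pi})/\varphi(e^{-\pi})$ from Corollary~\ref{cor:5}. The paper records the key cancellation as the algebraic identity $2\cdot\frac{11+5\sqrt{5}}{4}\cdot\big(\frac{1+\sqrt{5}}{2}\big)^{-5}=1$, which is exactly your observation $16\cos^5\frac{\pi}{5}=\tfrac12 G_{25}^5$ rewritten.
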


\begin{proof} Apply Theorem~\ref{thm:25n} with $n = 25$, and also use \eqref{def:s(p)}. To obtain the value of $p$ in \eqref{eq:s(p)-p}, we need the value of $G_{625}$ given in Corollary~\ref{cor:G625} and the value of $G_{25}$ provided in \eqref{G25}.
Note that
\begin{equation*}
2\bigg(\frac{11 + 5\sqrt{5}}{4}\bigg)\bigg(\frac{1 + \sqrt{5}}{2}\bigg)^{-5} = 1.
\end{equation*}
The value for $\varphi(e^{-5\pi})/\varphi(e^{-\pi})$ is given in Corollary~\ref{cor:5}. The proof is now straightforwardly completed.
\end{proof}

Corollaries~\ref{cor:G125}--\ref{cor:125} can be expressed by radicals using \eqref{eq:sin-cos-pi-5}.

\section{Values of the Borweins' cubic theta function $a(q)$}

Recall that the Borweins' cubic theta function $a(q)$ is defined by \eqref{def:a}. This section is devoted to determining specific values of $a(q)$.
We first provide the following theorem, which is analogous to Theorems~\ref{thm:3n} and~\ref{thm:5n}.

\begin{theorem}\label{thm:a} If $n$ is a positive rational number, then
\begin{equation*}
\frac{a(e^{-2\pi\sqrt{n}})}{\varphi^2(e^{-\pi\sqrt{n}})} = \frac{1}{3}\bigg(1 + \frac{2\sqrt{2}G_n^3}{G_{9n}^9}\bigg)^{1/4}\bigg(1 + \frac{\sqrt{2}G_{9n}^3}{2G_n^9}\bigg).
\end{equation*}
\end{theorem}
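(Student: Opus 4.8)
The plan is to deduce Theorem~\ref{thm:a} from the single $q$-series identity
\begin{equation}\label{eq:a-plan}
4\varphi(q)\varphi(q^3)\,a(q^2) = \varphi^4(q) + 3\varphi^4(q^3), \qquad 0 < q < 1,
\end{equation}
and then to specialize $q = e^{-\pi\sqrt{n}}$, in the same spirit in which Theorems~\ref{thm:3n} and~\ref{thm:5n} were extracted from the main cubic and quintic theorems. Dividing \eqref{eq:a-plan} by $4\varphi^3(q)\varphi(q^3)$ and factoring out $\varphi(q)/\varphi(q^3)$, the identity takes the shape
\begin{equation*}
\frac{a(q^2)}{\varphi^2(q)} = \frac{\varphi(q)}{\varphi(q^3)}\left(\frac14 + \frac34\,\frac{\varphi^4(q^3)}{\varphi^4(q)}\right),
\end{equation*}
which already exhibits the product structure of the claimed evaluation.

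I would then carry out the specialization. Setting $q = e^{-\pi\sqrt{n}}$, Theorem~\ref{thm:maincubic}\eqref{3:phi-to-p} together with Lemma~\ref{lemma:3:p} gives
\begin{equation*}
\frac{\varphi^4(q)}{\varphi^4(q^3)} = 1 + p^3 = 1 + \frac{2\sqrt{2}\,G_n^3}{G_{9n}^9}, \qquad\text{so}\qquad \frac{\varphi(q)}{\varphi(q^3)} = \bigg(1 + \frac{2\sqrt{2}\,G_n^3}{G_{9n}^9}\bigg)^{1/4},
\end{equation*}
while Theorem~\ref{thm:3n} (equivalently \eqref{eq:3sqrtn}) yields, upon taking fourth powers,
\begin{equation*}
\frac{\varphi^4(q^3)}{\varphi^4(q)} = \frac19\bigg(1 + \frac{2\sqrt{2}\,G_{9n}^3}{G_n^9}\bigg).
\end{equation*}
Substituting these two relations into the factored identity and simplifying the bracket,
\begin{equation*}
\frac14 + \frac34\cdot\frac19\bigg(1 + \frac{2\sqrt{2}\,G_{9n}^3}{G_n^9}\bigg) = \frac{1}{12}\bigg(4 + \frac{2\sqrt{2}\,G_{9n}^3}{G_n^9}\bigg) = \frac13\bigg(1 + \frac{\sqrt{2}\,G_{9n}^3}{2G_n^9}\bigg),
\end{equation*}
reproduces exactly the right-hand side of Theorem~\ref{thm:a}; notably, the modular relation \eqref{eq:modular9} is not needed here.

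The main obstacle is thus the $q$-series identity \eqref{eq:a-plan}, and I would establish it in one of two ways. The elementary route starts from the classical representation $a(q) = \varphi(q)\varphi(q^3) + 4q\,\psi(q^2)\psi(q^6)$, where $\psi(q) := f(q,q^3)$ (see \cite{BorweinBrothersCubic}); replacing $q$ by $q^2$ gives $a(q^2) = \varphi(q^2)\varphi(q^6) + 4q^2\,\psi(q^4)\psi(q^{12})$, after which one reduces the right-hand side to $\varphi(q)$ and $\varphi(q^3)$ by means of standard $2$-dissection identities such as $\varphi^2(q) = \varphi^2(q^2) + 4q\,\psi^2(q^4)$ and the analogous decomposition of the mixed product $\varphi(q)\varphi(q^3)$; expressing the surviving $\psi$-product in terms of $\varphi(q)$ and $\varphi(q^3)$ is the delicate step. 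Alternatively, both sides of \eqref{eq:a-plan} are holomorphic modular forms of weight $2$ on a common congruence subgroup such as $\Gamma_0(12)$ with the appropriate character, so the identity follows from Sturm's bound once sufficiently many initial coefficients match; a direct expansion shows that both sides equal $4 + 8q + 24q^2 + 56q^3 + 24q^4 + 48q^5 + 168q^6 + \cdots$, which is comfortably beyond the required bound. Once \eqref{eq:a-plan} is secured, the two paragraphs above complete the proof.
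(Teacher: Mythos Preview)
Your proposal is correct and follows essentially the same route as the paper: the paper's proof quotes precisely your identity \eqref{eq:a-plan} in the equivalent form
\[
\frac{a(q^2)}{\varphi^2(q)} = \frac{1}{4}\frac{\varphi(q)}{\varphi(q^3)} + \frac{3}{4}\frac{\varphi^3(q^3)}{\varphi^3(q)},
\]
citing it as \cite[(6.4)]{BerndtBhargavaGarvan}, and then specializes using \eqref{eq:3sqrtn} and Theorem~\ref{thm:3n} exactly as you do. The only difference is that the paper treats the $q$-identity as known from the literature rather than proving it, so your ``main obstacle'' is in fact bypassed by a citation.
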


\begin{proof}
From \cite[(6.4)]{BerndtBhargavaGarvan},
\begin{equation*}
\frac{a(q^2)}{\varphi^2(q)} = \frac{1}{4}\frac{\varphi(q)}{\varphi(q^3)} + \frac{3}{4}\frac{\varphi^3(q^3)}{\varphi^3(q)} = \frac{1}{4}\frac{\varphi(q)}{\varphi(q^3)}\bigg(1 + 3\frac{\varphi^4(q^3)}{\varphi^4(q)}\bigg).
\end{equation*}
Set $q := e^{-\pi\sqrt{n}}$. Using \eqref{eq:3sqrtn} and Theorem~\ref{thm:3n}, after rearrangement, we complete the proof of Theorem~\ref{thm:a}.
\end{proof}

\begin{corollary} We have
\begin{equation*}
\frac{a(e^{-2\pi/\sqrt{3}})}{\varphi^2(e^{-\pi\sqrt{3}})} = \frac{3^{3/4}}{2}.
\end{equation*}
\end{corollary}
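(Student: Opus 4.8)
The plan is to specialize Theorem~\ref{thm:a} to $n = 1/3$, observing that $2\pi/\sqrt{3} = 2\pi\sqrt{1/3}$ and that $9\cdot\tfrac{1}{3} = 3$. The first thing I would record is the pair of class invariants needed: by \eqref{G3} together with the reflection \eqref{eq:G}, we have $G_{1/3} = G_3 = 2^{1/12}$, so in this instance $G_n$ and $G_{9n}$ coincide, both being equal to $2^{1/12}$. This coincidence is what makes the example so clean.

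Next I would substitute these values into the two factors that appear in Theorem~\ref{thm:a}. Since $G_n^3 = G_{9n}^3 = 2^{1/4}$ and $G_n^9 = G_{9n}^9 = 2^{3/4}$, the first factor collapses to $1 + 2\sqrt{2}\cdot 2^{1/4}/2^{3/4} = 1 + 2 = 3$, contributing a factor of $3^{1/4}$, while the second factor becomes $1 + \sqrt{2}\cdot 2^{1/4}/(2\cdot 2^{3/4}) = 1 + \tfrac{1}{2} = \tfrac{3}{2}$. Multiplying by the prefactor $\tfrac{1}{3}$ gives $\tfrac{1}{3}\cdot 3^{1/4}\cdot\tfrac{3}{2} = 3^{1/4}/2$. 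This is the value of $a(e^{-2\pi/\sqrt{3}})/\varphi^2(e^{-\pi/\sqrt{3}})$, since Theorem~\ref{thm:a} with $n=1/3$ naturally produces the theta function at argument $e^{-\pi\sqrt{n}} = e^{-\pi/\sqrt{3}}$ in the denominator.

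The one step that requires attention is passing from the denominator $\varphi^2(e^{-\pi/\sqrt{3}})$ supplied by the theorem to the denominator $\varphi^2(e^{-\pi\sqrt{3}})$ demanded by the statement. For this I would invoke the transformation formula \eqref{eq:transform} with $n = 3$, namely $\varphi(e^{-\pi/\sqrt{3}}) = 3^{1/4}\varphi(e^{-\pi\sqrt{3}})$, so that $\varphi^2(e^{-\pi/\sqrt{3}}) = \sqrt{3}\,\varphi^2(e^{-\pi\sqrt{3}})$. Multiplying the ratio obtained above by this factor of $\sqrt{3}$ converts $3^{1/4}/2$ into $3^{3/4}/2$, which is precisely the asserted value. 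I expect no substantive obstacle: the argument is entirely routine once the correct specialization $n = 1/3$ is chosen and the transformation formula is applied to align the theta arguments.
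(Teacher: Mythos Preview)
Your proposal is correct and follows essentially the same approach as the paper's own proof: specialize Theorem~\ref{thm:a} to $n = 1/3$, use $G_{1/3} = G_3 = 2^{1/12}$ to reduce the right-hand side to $3^{1/4}/2$, and then invoke the transformation formula \eqref{eq:transform} to pass from $\varphi^2(e^{-\pi/\sqrt{3}})$ to $\varphi^2(e^{-\pi\sqrt{3}})$, picking up the extra factor of $\sqrt{3}$.
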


\begin{proof} We apply Theorem~\ref{thm:a} with $n = 1/3$. Using the value of $G_3$ from \eqref{G3} and with the use of \eqref{eq:G}, we find that
\begin{equation*}
\frac{a(e^{-2\pi/\sqrt{3}})}{\varphi^2(e^{-\pi/\sqrt{3}})} = \frac{1}{3}\bigg(1 + \frac{2\sqrt{2}\cdot 2^{1/4}}{2^{3/4}}\bigg)^{1/4}\bigg(1 + \frac{\sqrt{2}\cdot 2^{1/4}}{2\cdot 2^{3/4}}\bigg) = \frac{3^{1/4}}{2}.
\end{equation*}
Using the transformation formula \eqref{eq:transform}, we complete the proof.
\end{proof}

\begin{corollary} We have
\begin{equation*}
\frac{a(e^{-2\pi})}{\varphi^2(e^{-\pi})} = \bigg(\frac{1}{4} + \frac{1}{2\sqrt{3}}\bigg)^{1/4}.
\end{equation*}
\end{corollary}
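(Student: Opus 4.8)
The plan is to apply Theorem~\ref{thm:a} with $n = 1$. Since then $\sqrt{n} = 1$, the left-hand side of the theorem is precisely $a(e^{-2\pi})/\varphi^2(e^{-\pi})$, the quantity to be evaluated, so the only inputs required are the class invariants $G_1$ and $G_9$. I would use $G_1 = 1$ together with \eqref{G9}, from which
\begin{equation*}
G_9 = \bigg(\frac{1 + \sqrt{3}}{\sqrt{2}}\bigg)^{1/3}, \qquad \text{so that} \qquad G_9^3 = \frac{1 + \sqrt{3}}{\sqrt{2}} \quad \text{and} \quad G_9^9 = \frac{(1+\sqrt{3})^3}{2\sqrt{2}} = \frac{5 + 3\sqrt{3}}{\sqrt{2}}.
\end{equation*}

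Next I would substitute these values into the two factors in Theorem~\ref{thm:a}. For the first factor, a short rationalization gives
\begin{equation*}
1 + \frac{2\sqrt{2}\,G_1^3}{G_9^9} = 1 + \frac{4}{5 + 3\sqrt{3}} = 6\sqrt{3} - 9;
\end{equation*}
equivalently, this factor is recognizable as $\varphi^4(e^{-\pi})/\varphi^4(e^{-3\pi})$ via \eqref{eq:3sqrtn}, whose value $6\sqrt{3} - 9$ is already recorded in Corollary~\ref{cor:3}, providing a convenient cross-check. For the second factor the $\sqrt{2}$ cancels, leaving
\begin{equation*}
1 + \frac{\sqrt{2}\,G_9^3}{2 G_1^9} = 1 + \frac{1 + \sqrt{3}}{2} = \frac{3 + \sqrt{3}}{2}.
\end{equation*}
Theorem~\ref{thm:a} then yields
\begin{equation*}
\frac{a(e^{-2\pi})}{\varphi^2(e^{-\pi})} = \frac{1}{3}(6\sqrt{3} - 9)^{1/4}\cdot\frac{3 + \sqrt{3}}{2} = \frac{3 + \sqrt{3}}{6}(6\sqrt{3} - 9)^{1/4}.
\end{equation*}

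The only genuine work is the final simplification, matching this mixed radical expression to the compact form $\big(\tfrac14 + \tfrac{1}{2\sqrt3}\big)^{1/4}$, and I would carry it out by raising to the fourth power. Using $(3+\sqrt{3})^4 = 252 + 144\sqrt{3}$ and $6^4 = 1296$, one gets $\big((3+\sqrt{3})/6\big)^4 = (7 + 4\sqrt{3})/36$, and hence
\begin{equation*}
\frac{7 + 4\sqrt{3}}{36}(6\sqrt{3} - 9) = \frac{(7 + 4\sqrt{3})(2\sqrt{3} - 3)}{12} = \frac{3 + 2\sqrt{3}}{12} = \frac{1}{4} + \frac{1}{2\sqrt{3}},
\end{equation*}
which is exactly the fourth power of the claimed value. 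Since both sides are positive, taking fourth roots completes the proof. I expect no conceptual obstacle here—the difficulty is purely bookkeeping, namely keeping the nested radicals and rationalizations straight—so the fourth-power verification above is the crux of the argument.
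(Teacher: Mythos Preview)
Your proposal is correct and follows essentially the same approach as the paper: apply Theorem~\ref{thm:a} with $n=1$, substitute $G_1=1$ and $G_9$ from \eqref{G9}, reduce each factor to $(6\sqrt{3}-9)^{1/4}$ and $(3+\sqrt{3})/2$, and then simplify by comparing fourth powers. The paper carries out the same computation, just writing $G_9^{-3}=(\sqrt{3}-1)/\sqrt{2}$ directly rather than rationalizing $4/(5+3\sqrt{3})$; the intermediate expression $\tfrac{3+\sqrt{3}}{6}(6\sqrt{3}-9)^{1/4}$ and the fourth-power algebra match line for line.
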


\begin{proof}
We apply Theorem~\ref{thm:a} with $n = 1$. Employ the value $G_1 = 1$ and the value of $G_9$ given in \eqref{G9}. Thus,
\begin{align*}
\frac{a(e^{-2\pi})}{\varphi^2(e^{-\pi})} &= \frac{1}{3}\Bigg(1 + 2\sqrt{2}\bigg(\frac{\sqrt{3} - 1}{\sqrt{2}}\bigg)^3\Bigg)^{1/4}\bigg(1 + \frac{\sqrt{2}}{2}\bigg(\frac{\sqrt{3} + 1}{\sqrt{2}}\bigg)\bigg)\\
&= (3(2\sqrt{3} - 3))^{1/4}\bigg(\frac{3 + \sqrt{3}}{6}\bigg)\\
&= \bigg(\frac{(2\sqrt{3} - 3)(7 + 4\sqrt{3})}{12}\bigg)^{1/4}\\
&= \bigg(\frac{3 + 2\sqrt{3}}{12}\bigg)^{1/4}\\
&= \bigg(\frac{1}{4} + \frac{1}{2\sqrt{3}}\bigg)^{1/4}.\tag*{\qedhere}
\end{align*}
\end{proof}

The first author and Heng Huat Chan~\cite{BerndtChan}, \cite[p.~328]{BerndtV} had previously obtained the equivalent value
\begin{equation*}
\frac{a(e^{-2\pi})}{\varphi^2(e^{-\pi})} = \frac{1}{(12)^{1/8} (\sqrt{3} - 1)^{1/2}}.
\end{equation*}

\begin{corollary} We have
\begin{equation*}
\frac{a(e^{-2\pi\sqrt{3}})}{\varphi^2(e^{-\pi\sqrt{3}})} = \frac{3^{3/4}(1 + 2^{2/3})}{6}.
\end{equation*}
\end{corollary}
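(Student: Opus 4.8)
The plan is to apply Theorem~\ref{thm:a} with $n = 3$, the natural next case after $n = 1/3$ and $n = 1$ treated just above. The two class invariants needed are $G_3 = 2^{1/12}$ from \eqref{G3} and $G_{27} = 2^{1/12}(2^{1/3} - 1)^{-1/3}$ from \eqref{G27}, so the entire evaluation reduces to arithmetic in the field $\mathbb{Q}(2^{1/3})$.

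First I would compute the two factors on the right side of Theorem~\ref{thm:a} separately. For the quartic factor, using $G_3^3 = 2^{1/4}$ and $G_{27}^9 = 2^{3/4}(2^{1/3} - 1)^{-3}$, the powers of $2$ cancel and leave
\begin{equation*}
1 + \frac{2\sqrt{2}\,G_3^3}{G_{27}^9} = 1 + 2(2^{1/3} - 1)^3.
\end{equation*}
For the second factor, using $G_{27}^3 = 2^{1/4}(2^{1/3} - 1)^{-1}$ and $G_3^9 = 2^{3/4}$, one finds
\begin{equation*}
1 + \frac{\sqrt{2}\,G_{27}^3}{2G_3^9} = 1 + \frac{1}{2(2^{1/3} - 1)}.
\end{equation*}
The main simplifying device is the identity $(2^{1/3} - 1)(2^{2/3} + 2^{1/3} + 1) = 1$, which rationalizes the denominator and turns the second factor into $\tfrac{1}{2}(3 + 2^{1/3} + 2^{2/3})$.

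Next I would expand $2(2^{1/3} - 1)^3 = 2 + 6\cdot 2^{1/3} - 6\cdot 2^{2/3}$, so that the argument of the fourth root factors as $3\big(1 + 2\cdot 2^{1/3} - 2\cdot 2^{2/3}\big)$; pulling out $3^{1/4}$ and absorbing $\tfrac{1}{3}\cdot 3^{1/4} = 3^{-3/4}$ reduces the whole claim to the single algebraic identity
\begin{equation*}
\big(1 + 2\cdot 2^{1/3} - 2\cdot 2^{2/3}\big)^{1/4}\,(3 + 2^{1/3} + 2^{2/3}) = \sqrt{3}\,(1 + 2^{2/3}).
\end{equation*}

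I expect this last identity to be the crux. It is an equality in $\mathbb{Q}(2^{1/3}, \sqrt{3})$ involving a fourth root, so the clean way to verify it by hand is to raise both sides to the fourth power and reduce the resulting polynomial relation using $t^3 = 2$ with $t = 2^{1/3}$; equivalently, it can be confirmed directly in \emph{Mathematica}, as with the analogous Corollaries~\ref{cor:3sqrt11} and~\ref{cor:81}. As a cross-check, \eqref{eq:3sqrtn} identifies the quartic factor with the reciprocal of the value $\varphi(e^{-3\pi\sqrt{3}})/\varphi(e^{-\pi\sqrt{3}})$ recorded just after Corollary~\ref{cor:3sqrt3}; once the identity is in hand, collecting the factors yields $\frac{3^{3/4}(1 + 2^{2/3})}{6}$, as claimed.
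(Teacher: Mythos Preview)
Your approach is correct and follows the paper's: apply Theorem~\ref{thm:a} with $n=3$, insert $G_3$ and $G_{27}$, and simplify. Your intermediate expressions for the two factors are right, and the algebraic identity you isolate at the end is true (raising to the fourth power and reducing modulo $t^3-2$ does verify it).

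The only difference is in how the final simplification is carried out. You treat the identity
\[
\big(1 + 2\cdot 2^{1/3} - 2\cdot 2^{2/3}\big)^{1/4}\,(3 + 2^{1/3} + 2^{2/3}) = \sqrt{3}\,(1 + 2^{2/3})
\]
as a black box to be checked by brute force or by \emph{Mathematica}. The paper instead notices the perfect-square factorization
\[
1 + 2^{4/3} - 2^{5/3} = (2^{2/3}-1)^2,
\]
which collapses the quartic root to $(2^{2/3}-1)^{1/2}$. After that, writing $2^{4/3}-1=(2^{2/3}+1)(2^{2/3}-1)$ and $2^{4/3}-2=2(2^{1/3}-1)$, the remaining simplification is the single relation $(2^{2/3}-1)(2^{1/3}+1)^2=3$, which is immediate. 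This factoring trick is worth internalizing: it removes the need for any fourth-power expansion or computer check and keeps the whole argument to a few lines by hand.
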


\begin{proof}
We apply Theorem~\ref{thm:a} with $n = 3$. Utilizing the values of $G_3$ and $G_{27}$ given by \eqref{G3} and \eqref{G27}, respectively, we deduce that
\begin{align*}
\frac{a(e^{-2\pi\sqrt{3}})}{\varphi^2(e^{-\pi\sqrt{3}})} &= \frac{1}{3}\Bigg(1 + 2\sqrt{2}\bigg(\frac{2^{1/3} - 1}{2^{1/4}}\bigg)^3 2^{1/4}\Bigg)^{1/4}\bigg(1 + \frac{\sqrt{2}}{2}2^{-3/4}\bigg(\frac{2^{1/4}}{2^{1/3} - 1}\bigg)\bigg)\\
&= \frac{1}{3}(3(1 + 2^{4/3} - 2^{5/3}))^{1/4}\bigg(\frac{2^{4/3} - 1}{2^{4/3} - 2}\bigg)\\
&= \frac{1}{3}(3(2^{2/3} - 1)^2)^{1/4}\bigg(\frac{2^{4/3} - 1}{2^{4/3} - 2}\bigg)\\
&= \frac{(2^{2/3} - 1)^{1/2}}{3^{3/4}}\bigg(\frac{(2^{2/3} + 1)(2^{2/3} - 1)}{2^{4/3} - 2}\bigg)\\
&= \frac{(2^{2/3} - 1)^{1/2}(2^{2/3} + 1)}{3^{3/4}}\bigg(\frac{2^{1/3} + 1}{2}\bigg)\\
&= \frac{(2^{2/3} - 1)^{1/2}(2^{1/3} + 1)}{3^{3/4}}\bigg(\frac{2^{2/3} + 1}{2}\bigg)\\
&= \frac{\sqrt{3}}{3^{3/4}}\bigg(\frac{2^{2/3} + 1}{2}\bigg)\\
&= \frac{3^{3/4}(1 + 2^{2/3})}{6}.\tag*{\qedhere}
\end{align*}
\end{proof}

\begin{corollary} We have
\begin{equation*}
\frac{a(e^{-2\pi\sqrt{5}})}{\varphi^2(e^{-\pi\sqrt{5}})} = \frac{1}{3}\left(1 + 2\sqrt{2}(\sqrt{5} - 2)^2\bigg(\frac{\sqrt{5} - \sqrt{3}}{\sqrt{2}}\bigg)^3\right)^{1/4}\bigg(1 + \frac{\sqrt{2}}{2}\bigg(\frac{\sqrt{5} + \sqrt{3}}{\sqrt{2}}\bigg)\bigg).
\end{equation*}
\end{corollary}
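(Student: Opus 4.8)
The plan is to apply Theorem~\ref{thm:a} directly with $n = 5$, reducing the entire computation to the two class invariants $G_5$ and $G_{45}$, both of which are already recorded in the paper. Setting $n = 5$ in Theorem~\ref{thm:a} gives
\begin{equation*}
\frac{a(e^{-2\pi\sqrt{5}})}{\varphi^2(e^{-\pi\sqrt{5}})} = \frac{1}{3}\bigg(1 + \frac{2\sqrt{2}G_5^3}{G_{45}^9}\bigg)^{1/4}\bigg(1 + \frac{\sqrt{2}G_{45}^3}{2G_5^9}\bigg),
\end{equation*}
so the whole task is to show that the two bracketed factors collapse to the expressions appearing in the statement.

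First I would substitute $G_5 = (2 + \sqrt{5})^{1/12}$ from \eqref{G5} and $G_{45} = (2 + \sqrt{5})^{1/4}((\sqrt{3} + \sqrt{5})/\sqrt{2})^{1/3}$ from \eqref{G45}. Because the powers of $(2+\sqrt{5})$ and of $(\sqrt{3}+\sqrt{5})/\sqrt{2}$ appear in clean fractional exponents, the bookkeeping is immediate. In the second factor, $G_5^9 = (2+\sqrt{5})^{3/4}$ cancels exactly against the matching power $(2+\sqrt{5})^{3/4}$ inside $G_{45}^3$, leaving the second factor equal to $1 + \frac{\sqrt{2}}{2}((\sqrt{5}+\sqrt{3})/\sqrt{2})$, which is already the displayed form. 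For the first factor, combining exponents yields $2\sqrt{2}(2+\sqrt{5})^{-2}((\sqrt{3}+\sqrt{5})/\sqrt{2})^{-3}$.

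The only genuinely computational step, and the main (minor) obstacle, is recognizing the two reciprocal identities that turn these negative powers into the positive-power form in the statement. These are $(2+\sqrt{5})^{-1} = \sqrt{5} - 2$, which follows from $(2+\sqrt{5})(\sqrt{5}-2) = 1$, and $((\sqrt{3}+\sqrt{5})/\sqrt{2})^{-1} = (\sqrt{5}-\sqrt{3})/\sqrt{2}$, which follows from rationalizing $\sqrt{2}/(\sqrt{3}+\sqrt{5})$. Applying them gives $2\sqrt{2}G_5^3/G_{45}^9 = 2\sqrt{2}(\sqrt{5}-2)^2((\sqrt{5}-\sqrt{3})/\sqrt{2})^3$, matching the first factor exactly, and the proof concludes by assembling the two simplified factors. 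I expect no analytic difficulty here: everything after invoking Theorem~\ref{thm:a} is elementary algebra with quadratic surds.
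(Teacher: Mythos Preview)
Your proposal is correct and follows exactly the same approach as the paper: apply Theorem~\ref{thm:a} with $n=5$ and substitute the values of $G_5$ and $G_{45}$ from \eqref{G5} and \eqref{G45}, just as in Corollary~\ref{cor:3sqrt5}. Your write-up actually supplies more detail than the paper's proof, which simply says ``The proof is similar to that for Corollary~\ref{cor:3sqrt5}.''
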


\begin{proof}
We apply Theorem~\ref{thm:a} with $n = 5$. The proof is similar to that for Corollary~\ref{cor:3sqrt5}.
\end{proof}

\begin{corollary} We have
\begin{align*}
\frac{a(e^{-2\pi\sqrt{7}})}{\varphi^2(e^{-\pi\sqrt{7}})} &= \frac{1}{3}\left(1 + \bigg(\frac{\sqrt{7} - \sqrt{3}}{2}\bigg)^3\left(\sqrt{\frac{5 + \sqrt{21}}{8}} - \sqrt{\frac{\sqrt{21} - 3}{8}}\right)^9\right)^{1/4}\\
&\qquad\qquad\times \left(1 + \frac{1}{4}\bigg(\frac{\sqrt{7} + \sqrt{3}}{2}\bigg)\left(\sqrt{\frac{5 + \sqrt{21}}{8}} + \sqrt{\frac{\sqrt{21} - 3}{8}}\right)^3\right).
\end{align*}
\end{corollary}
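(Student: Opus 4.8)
The plan is to apply Theorem~\ref{thm:a} with $n = 7$ and substitute the class invariant values already recorded in the excerpt. First I would take $G_7 = 2^{1/4}$ from \eqref{G7} and the value of $G_{63}$ supplied in the proof of Corollary~\ref{cor:3sqrt7}, namely
\begin{equation*}
G_{63} = 2^{1/4}\bigg(\frac{\sqrt{7} + \sqrt{3}}{2}\bigg)^{1/3}\left(\sqrt{\frac{5 + \sqrt{21}}{8}} + \sqrt{\frac{\sqrt{21} - 3}{8}}\right),
\end{equation*}
where I invoke the simplification $\big(\tfrac{5 + \sqrt{21}}{2}\big)^{1/6} = \big(\tfrac{\sqrt{7} + \sqrt{3}}{2}\big)^{1/3}$ noted there. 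The computation then proceeds exactly as in Corollaries~\ref{cor:3sqrt5} and~\ref{cor:3sqrt7} and the preceding $a(q)$ corollaries.

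Next I would handle the two factors of the right-hand side of Theorem~\ref{thm:a} separately. For the second factor $1 + \tfrac{\sqrt{2}G_{63}^3}{2G_7^9}$, the powers of $2$ collapse as $2^{1/2}\cdot 2^{3/4}/(2\cdot 2^{9/4}) = 2^{-2}$, leaving precisely $\tfrac{1}{4}\big(\tfrac{\sqrt{7}+\sqrt{3}}{2}\big)\big(\sqrt{\tfrac{5+\sqrt{21}}{8}}+\sqrt{\tfrac{\sqrt{21}-3}{8}}\big)^3$, which is the claimed form with no further manipulation. For the first factor $1 + \tfrac{2\sqrt{2}G_7^3}{G_{63}^9}$, the numerator $2\sqrt{2}\,G_7^3 = 2^{9/4}$ exactly matches the leading power of $2$ in $G_{63}^9 = 2^{9/4}\big(\tfrac{\sqrt{7}+\sqrt{3}}{2}\big)^{3}\big(\sqrt{\tfrac{5+\sqrt{21}}{8}}+\sqrt{\tfrac{\sqrt{21}-3}{8}}\big)^9$, so the quotient reduces to $\big(\tfrac{\sqrt{7}+\sqrt{3}}{2}\big)^{-3}\big(\sqrt{\tfrac{5+\sqrt{21}}{8}}+\sqrt{\tfrac{\sqrt{21}-3}{8}}\big)^{-9}$.

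The only step requiring a genuine observation, rather than mere cancellation of exponents, is converting this reciprocal into the stated difference form. Here I would use the two conjugate identities $\big(\tfrac{\sqrt{7}+\sqrt{3}}{2}\big)\big(\tfrac{\sqrt{7}-\sqrt{3}}{2}\big) = 1$ and $\big(\sqrt{\tfrac{5+\sqrt{21}}{8}}+\sqrt{\tfrac{\sqrt{21}-3}{8}}\big)\big(\sqrt{\tfrac{5+\sqrt{21}}{8}}-\sqrt{\tfrac{\sqrt{21}-3}{8}}\big) = 1$, the latter holding because $\tfrac{5+\sqrt{21}}{8} - \tfrac{\sqrt{21}-3}{8} = 1$. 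These turn the first factor into $1 + \big(\tfrac{\sqrt{7}-\sqrt{3}}{2}\big)^3\big(\sqrt{\tfrac{5+\sqrt{21}}{8}}-\sqrt{\tfrac{\sqrt{21}-3}{8}}\big)^9$, and combining the two factors yields the asserted value. I expect no real obstacle: the argument is routine and entirely parallel to the earlier corollaries, with the recognition of the reciprocal (conjugate) identities being the sole non-mechanical ingredient.
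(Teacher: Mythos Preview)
Your proposal is correct and follows exactly the approach the paper indicates: apply Theorem~\ref{thm:a} with $n=7$ and argue as in Corollary~\ref{cor:3sqrt7}. You have simply made explicit the routine exponent bookkeeping and the two conjugate identities $\big(\tfrac{\sqrt{7}+\sqrt{3}}{2}\big)\big(\tfrac{\sqrt{7}-\sqrt{3}}{2}\big)=1$ and $\tfrac{5+\sqrt{21}}{8}-\tfrac{\sqrt{21}-3}{8}=1$ that the paper leaves implicit.
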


\begin{proof}
We apply Theorem~\ref{thm:a} with $n = 7$. The proof is similar to that for Corollary~\ref{cor:3sqrt7}.
\end{proof}

\begin{corollary} We have
\begin{align*}
\frac{a(e^{-6\pi})}{\varphi^2(e^{-\pi})} = \frac{1}{3(6\sqrt{3} - 9)^{1/2}}&\left(1 + 2\sqrt{2}\left(\frac{(2(\sqrt{3} - 1))^{1/3} - 1}{(2(\sqrt{3} + 1))^{1/3} + 1}\right)^3 \bigg(\frac{\sqrt{3} + 1}{\sqrt{2}}\bigg)\right)^{1/4}\\
&\qquad\times\left(1 + \frac{\sqrt{2}}{2}\bigg(\frac{\sqrt{3} - 1}{\sqrt{2}}\bigg)^3\left(\frac{(2(\sqrt{3} + 1))^{1/3} + 1}{(2(\sqrt{3} - 1))^{1/3} - 1}\right)\right).
\end{align*}
\end{corollary}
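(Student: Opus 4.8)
The plan is to apply Theorem~\ref{thm:a}, but with a normalization twist. Since $e^{-6\pi} = e^{-2\pi\sqrt{9}}$, the natural choice is $n = 9$; however, Theorem~\ref{thm:a} with $n = 9$ delivers the ratio $a(e^{-6\pi})/\varphi^2(e^{-3\pi})$, whose denominator is $\varphi^2(e^{-3\pi})$ rather than the desired $\varphi^2(e^{-\pi})$. I would therefore bridge the gap by writing
\begin{equation*}
\frac{a(e^{-6\pi})}{\varphi^2(e^{-\pi})} = \frac{\varphi^2(e^{-3\pi})}{\varphi^2(e^{-\pi})}\cdot\frac{a(e^{-6\pi})}{\varphi^2(e^{-3\pi})},
\end{equation*}
and then invoke Corollary~\ref{cor:3}, which upon squaring gives $\varphi^2(e^{-3\pi})/\varphi^2(e^{-\pi}) = (6\sqrt{3} - 9)^{-1/2}$. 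This single factor accounts for the prefactor $\frac{1}{3(6\sqrt{3} - 9)^{1/2}}$ in the claimed identity, with the remaining $\frac13$ coming from Theorem~\ref{thm:a} itself.

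For the remaining factor $a(e^{-6\pi})/\varphi^2(e^{-3\pi})$, I would apply Theorem~\ref{thm:a} with $n = 9$ and substitute the known class invariants $G_9$ from \eqref{G9} and $G_{81}$ from \eqref{G81}. The key simplifications are that $G_9^3 = (\sqrt{3}+1)/\sqrt{2}$ and $G_{81}^3 = \big((2(\sqrt{3}+1))^{1/3}+1\big)/\big((2(\sqrt{3}-1))^{1/3}-1\big)$, so the quantities $G_9^3/G_{81}^9$ and $G_{81}^3/G_9^9$ appearing in Theorem~\ref{thm:a} become explicit cubes of these expressions. A useful auxiliary identity is $\sqrt{2}/(\sqrt{3}+1) = (\sqrt{3}-1)/\sqrt{2}$, which rewrites $1/G_9^9 = \big(\sqrt{2}/(\sqrt{3}+1)\big)^3$ as $\big((\sqrt{3}-1)/\sqrt{2}\big)^3$ and thereby forces the second bracketed factor $\big(1 + \tfrac{\sqrt{2}}{2}(\tfrac{\sqrt{3}-1}{\sqrt{2}})^3 G_{81}^3\big)$ into exactly the stated shape; the first bracketed factor $\big(1 + 2\sqrt{2}\,(\cdots)^3(\tfrac{\sqrt{3}+1}{\sqrt{2}})\big)^{1/4}$ emerges directly from $2\sqrt{2}\,G_9^3/G_{81}^9$.

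Assembling the two pieces produces the claimed value. The computation is entirely mechanical once the substitutions are made, so the only genuine work is bookkeeping in the radical simplifications. The main (and rather minor) obstacle is recognizing at the outset that the exponent $e^{-6\pi}$ forces $n = 9$ rather than $n = 1$, and hence that the extra ratio supplied by Corollary~\ref{cor:3} must be inserted to reconcile the mismatched normalizations of the denominator; without this observation one would be tempted to read off $n=1$ from the denominator and obtain an incorrect expression.
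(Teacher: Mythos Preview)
Your proposal is correct and follows essentially the same approach as the paper: apply Theorem~\ref{thm:a} with $n=9$, substitute the class invariants $G_9$ from \eqref{G9} and $G_{81}$ from \eqref{G81}, and then insert the factor $\varphi^2(e^{-3\pi})/\varphi^2(e^{-\pi}) = (6\sqrt{3}-9)^{-1/2}$ coming from Corollary~\ref{cor:3} to pass from $\varphi^2(e^{-3\pi})$ to $\varphi^2(e^{-\pi})$. Your write-up in fact spells out the radical bookkeeping (in particular the identity $\sqrt{2}/(\sqrt{3}+1)=(\sqrt{3}-1)/\sqrt{2}$) more explicitly than the paper's one-line proof does.
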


\begin{proof}
We apply Theorem~\ref{thm:a} when $n = 9$. Employing the values of $G_9$ and $G_{81}$ given in \eqref{G9} and \eqref{G81}, respectively, and combining them with Corollary~\ref{cor:3}, we complete the proof.
\end{proof}

\begin{corollary} We have
\begin{align*}
&\frac{a(e^{-10\pi})}{\varphi^2(e^{-\pi})} = \frac{1}{3(5\sqrt{5} - 10)}\\
&\,\times\left(1 + 2\sqrt{2}(\sqrt{5} - 2)^2(2 - \sqrt{3})^3\bigg(\sqrt{4276 + 1104\sqrt{15}} - \sqrt{4275 + 1104\sqrt{15}}\bigg)^{3/2}\right)^{1/4}\\
&\,\times\left(1 + \frac{\sqrt{2}}{2}(\sqrt{5} - 2)^2(2 + \sqrt{3})\bigg(\sqrt{4276 + 1104\sqrt{15}} + \sqrt{4275 + 1104\sqrt{15}}\bigg)^{1/2}\right).
\end{align*}
\end{corollary}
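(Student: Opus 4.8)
The plan is to apply Theorem~\ref{thm:a} with $n = 25$, which directly yields $a(e^{-10\pi})/\varphi^2(e^{-5\pi})$ as $\tfrac13\bigl(1 + 2\sqrt2\,G_{25}^3/G_{225}^9\bigr)^{1/4}\bigl(1 + \sqrt2\,G_{225}^3/(2G_{25}^9)\bigr)$, since $2\pi\sqrt{25} = 10\pi$. Because the target denominator is $\varphi^2(e^{-\pi})$ rather than $\varphi^2(e^{-5\pi})$, the first step is to multiply by the ratio $\varphi^2(e^{-5\pi})/\varphi^2(e^{-\pi}) = 1/(5\sqrt5 - 10)$, which follows at once by squaring the evaluation of $\varphi(e^{-5\pi})/\varphi(e^{-\pi})$ in Corollary~\ref{cor:5}. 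This produces the prefactor $1/(3(5\sqrt5 - 10))$ in the stated result.

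Next I would substitute the known invariants $G_{25} = (\sqrt5 + 1)/2$ from~\eqref{G25} and $G_{225}$ from~\eqref{G225} into the two bracketed factors. Writing $G_{225} = G_{25}\,(2+\sqrt3)^{1/3}\,B$ with $B = (\sqrt{4+\sqrt{15}} + (15)^{1/4})/2$, the exponent bookkeeping collapses the ratios to $G_{25}^3/G_{225}^9 = 1/(G_{25}^6 (2+\sqrt3)^3 B^9)$ and $G_{225}^3/G_{25}^9 = (2+\sqrt3) B^3/G_{25}^6$. The simplification then rests on three reciprocal relations already recorded in the excerpt: the relation $1/G_{25}^6 = ((\sqrt5-1)/2)^6 = (\sqrt5 - 2)^2$ used in the proof of Corollary~\ref{cor:3:15}; the unit relation $(2+\sqrt3)(2-\sqrt3) = 1$, so that $1/(2+\sqrt3)^3 = (2-\sqrt3)^3$; and the cube identity $B^3 = (\sqrt{4276+1104\sqrt{15}} + \sqrt{4275+1104\sqrt{15}})^{1/2}$ from the same proof.

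The one genuinely new manipulation is the ninth power $G_{225}^9$ appearing in the first factor. Here I would observe that the two nested surds $\sqrt{4276+1104\sqrt{15}} \pm \sqrt{4275+1104\sqrt{15}}$ are reciprocals, since their product telescopes to $(4276+1104\sqrt{15}) - (4275+1104\sqrt{15}) = 1$. Consequently $1/B^9 = (B^3)^{-3}$ converts into a positive power of the difference, yielding exactly the factor $(\sqrt{4276+1104\sqrt{15}} - \sqrt{4275+1104\sqrt{15}})^{3/2}$ that occurs inside the fourth root of the statement, together with $(\sqrt5-2)^2$ and $(2-\sqrt3)^3$. The second factor, built from $G_{225}^3/G_{25}^9$, needs no such inversion: $B^3$ stays a sum and reproduces $(\sqrt{4276+1104\sqrt{15}} + \sqrt{4275+1104\sqrt{15}})^{1/2}$ directly, with the $(2+\sqrt3)$ and $(\sqrt5-2)^2$ factors attached as above.

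The main obstacle is purely bookkeeping: tracking the exponents cleanly through the third and ninth powers of $G_{225}$ and keeping the signs of the nested radicals straight when passing between a surd and its reciprocal. No identity beyond those already established in the excerpt is required, so once the three reciprocal relations are invoked the corollary follows by routine algebraic simplification, exactly paralleling the cubic examples in Corollaries~\ref{cor:3:15} and~\ref{cor:45}.
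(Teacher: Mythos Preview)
Your proposal is correct and follows essentially the same approach as the paper: apply Theorem~\ref{thm:a} with $n=25$, convert $\varphi^2(e^{-5\pi})$ to $\varphi^2(e^{-\pi})$ via Corollary~\ref{cor:5}, and simplify using the class invariants $G_{25}$, $G_{225}$ and the algebraic identities already established in the proof of Corollary~\ref{cor:3:15}. The paper's own proof is the one-line remark ``Invoke Theorem~\ref{thm:a} with $n=25$. The proof is similar to that for Corollary~\ref{cor:3:15},'' and your write-up is a faithful unpacking of exactly that, including the key observation that the product of the two nested surds is $1$.
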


\begin{proof}
Invoke Theorem~\ref{thm:a} with $n = 25$. The proof is similar to that for Corollary~\ref{cor:3:15}.
\end{proof}

\begin{corollary} We have
\begin{align*}
&\frac{a(e^{-14\pi})}{\varphi^2(e^{-\pi})} = \frac{1}{3}\left(\frac{\sqrt{13 + \sqrt{7}} + \sqrt{7 + 3\sqrt{7}}}{14} (28)^{1/8}\right)\\
&\times \left(1 + 2\sqrt{2}(2\sqrt{7} - 3\sqrt{3})^{3/2}(2 - \sqrt{3})^{3/2}\vphantom{\left(\frac{\sqrt{3 + \sqrt{7}} - (6\sqrt{7})^{1/4}}{\sqrt{3 + \sqrt{7}} + (6\sqrt{7})^{1/4}}\right)^{9/2}}\right.\\
&\qquad\qquad \times \left.\left(\frac{\sqrt{3 + \sqrt{7}} - (6\sqrt{7})^{1/4}}{\sqrt{3 + \sqrt{7}} + (6\sqrt{7})^{1/4}}\right)^{9/2}\left(\sqrt{932 + 352\sqrt{7}} - \sqrt{931 + 352\sqrt{7}}\right)\right)^{1/4}\\
&\times \left(1 + \frac{\sqrt{2}}{2}\left(\sqrt{932 + 352\sqrt{7}} - \sqrt{931 + 352\sqrt{7}}\right)\vphantom{\left(\frac{\sqrt{3 + \sqrt{7}} + (6\sqrt{7})^{1/4}}{\sqrt{3 + \sqrt{7}} - (6\sqrt{7})^{1/4}}\right)^{3/2}}\right.\\
&\qquad\qquad \times \left.(2\sqrt{7} + 3\sqrt{3})^{1/2}(2 + \sqrt{3})^{1/2}\left(\frac{\sqrt{3 + \sqrt{7}} + (6\sqrt{7})^{1/4}}{\sqrt{3 + \sqrt{7}} - (6\sqrt{7})^{1/4}}\right)^{3/2}\right).
\end{align*}
\end{corollary}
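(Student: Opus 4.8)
The plan is to apply Theorem~\ref{thm:a} with $n = 49$, which at once gives
\begin{equation*}
\frac{a(e^{-14\pi})}{\varphi^2(e^{-7\pi})} = \frac{1}{3}\bigg(1 + \frac{2\sqrt{2}G_{49}^3}{G_{441}^9}\bigg)^{1/4}\bigg(1 + \frac{\sqrt{2}G_{441}^3}{2G_{49}^9}\bigg).
\end{equation*}
The remaining tasks are to put the right-hand side in closed form and to convert the normalization from $\varphi^2(e^{-7\pi})$ to $\varphi^2(e^{-\pi})$. For the latter I would multiply through by the value $\varphi^2(e^{-7\pi})/\varphi^2(e^{-\pi})$ recorded in \eqref{e7}; this contributes precisely the prefactor $\frac{1}{14}(\sqrt{13+\sqrt{7}}+\sqrt{7+3\sqrt{7}})(28)^{1/8}$ displayed in the statement, exactly as in the $n=9$ and $n=25$ cases treated above.

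For the closed form I would substitute $G_{49}$ from \eqref{G49} and $G_{441}$ from the proof of Corollary~\ref{cor:21}. The crucial simplifying observation is that the middle factor of $G_{441}$ equals $G_{49}$, since $\sqrt{(2+\sqrt{7}+\sqrt{7+4\sqrt{7}})/2} = (\sqrt{4+\sqrt{7}}+7^{1/4})/2 = G_{49}$; hence $G_{441}/G_{49} = \sqrt{(\sqrt{3}+\sqrt{7})/2}\,(2+\sqrt{3})^{1/6}D$, where $D := \big((\sqrt{3+\sqrt{7}}+(6\sqrt{7})^{1/4})/(\sqrt{3+\sqrt{7}}-(6\sqrt{7})^{1/4})\big)^{1/2}$. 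Writing $G_{49}^3/G_{441}^9 = G_{49}^{-6}(G_{441}/G_{49})^{-9}$ and $G_{441}^3/G_{49}^9 = G_{49}^{-6}(G_{441}/G_{49})^{3}$ then reduces both parenthetical factors to products of powers of the three blocks $(\sqrt{3}+\sqrt{7})/2$, $2+\sqrt{3}$, and $D$, together with the common factor $G_{49}^{-6}$.

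The simplification rests on a few elementary identities: $G_{49}^{-6} = \big((\sqrt{4+\sqrt{7}}-7^{1/4})/2\big)^6 = \sqrt{932+352\sqrt{7}}-\sqrt{931+352\sqrt{7}}$ (recorded in Corollary~\ref{cor:21}), the cube relations $\big((\sqrt{7}-\sqrt{3})/2\big)^3 = 2\sqrt{7}-3\sqrt{3}$ and $\big((\sqrt{3}+\sqrt{7})/2\big)^3 = 2\sqrt{7}+3\sqrt{3}$, and $(2+\sqrt{3})(2-\sqrt{3}) = 1$. These send $(\sqrt{3}+\sqrt{7})/2$ raised to the powers $-9/2$ and $3/2$ to $(2\sqrt{7}-3\sqrt{3})^{3/2}$ and $(2\sqrt{7}+3\sqrt{3})^{1/2}$; they turn $(2+\sqrt{3})^{-3/2}$ into $(2-\sqrt{3})^{3/2}$ while $(2+\sqrt{3})^{1/2}$ already appears in the stated form; and $D^{-9}$ and $D^{3}$ reciprocate into the two $D$-blocks of the statement. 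Matching term by term then confirms both factors, and multiplying by \eqref{e7} completes the evaluation.

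The one genuine obstacle is bookkeeping: there are many fractional exponents, and one must apply each rationalization and each cube identity to the correct block without error. Conceptually, however, everything follows mechanically once $G_{441}$ is expressed relative to $G_{49}$ as above, and the computation can be confirmed symbolically.
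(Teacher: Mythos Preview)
Your proposal is correct and follows exactly the paper's approach: apply Theorem~\ref{thm:a} with $n=49$, substitute the values of $G_{49}$ and $G_{441}$ already assembled in the proof of Corollary~\ref{cor:21}, and then multiply by \eqref{e7} to pass from $\varphi^2(e^{-7\pi})$ to $\varphi^2(e^{-\pi})$. The paper simply records ``similar to that for Corollary~\ref{cor:21}''; you have spelled out the same simplifications (the equality of $G_{441}$'s middle factor with $G_{49}$, the identity $G_{49}^{-6}=\sqrt{932+352\sqrt{7}}-\sqrt{931+352\sqrt{7}}$, and the reciprocal/cube relations for $(\sqrt{7}\pm\sqrt{3})/2$ and $2\pm\sqrt{3}$) that reduce the two parenthetical factors to the stated form.
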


\begin{proof}
Apply Theorem~\ref{thm:a} with $n = 49$. The proof is similar to that for Corollary~\ref{cor:21}.
\end{proof}

\begin{corollary} We have
\begin{align*}
&\frac{a(e^{-18\pi})}{\varphi^2(e^{-\pi})} = \frac{(1 + (2(\sqrt{3} + 1))^{1/3})^2}{27} \\
&\,\times\left(1 + \left(\frac{3(\sqrt{3} + 1)}{\sqrt{3} + 1 + 2\left(\frac{(2(\sqrt{3} + 1))^{1/3} + 1}{(2(\sqrt{3} - 1))^{1/3} - 1}\right)^{1/3}} - 1\right)^3\right)^{1/4}\\
&\,\times\left(1 + \frac{1}{2}\left(\frac{(2(\sqrt{3} - 1))^{1/3} - 1}{(2(\sqrt{3} + 1))^{1/3} + 1}\right)^{8/3}\left(\frac{3(\sqrt{3} + 1)}{\sqrt{3} + 1 - \left(\frac{(2(\sqrt{3} + 1))^{1/3} + 1}{(2(\sqrt{3} - 1))^{1/3} - 1}\right)^{1/3}} - 2\right)\right).
\end{align*}
\end{corollary}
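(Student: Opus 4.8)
The plan is to invoke Theorem~\ref{thm:a} with $n = 81$, since $2\sqrt{81} = 18$. This immediately gives
\begin{equation*}
\frac{a(e^{-18\pi})}{\varphi^2(e^{-9\pi})} = \frac{1}{3}\bigg(1 + \frac{2\sqrt{2}G_{81}^3}{G_{729}^9}\bigg)^{1/4}\bigg(1 + \frac{\sqrt{2}G_{729}^3}{2G_{81}^9}\bigg),
\end{equation*}
so the whole computation reduces to knowing $G_{81}$ and $G_{729}$. I would take $G_{81}$ from \eqref{G81} and $G_{729}^3$ from the expression already established in the proof of Corollary~\ref{cor:81}. Note that the denominator here is $\varphi^2(e^{-9\pi})$, not $\varphi^2(e^{-\pi})$; correcting this will be the final step.

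To keep the algebra manageable, I would write $c := \sqrt{3} + 1$ and $r := G_{81} = \big((2(\sqrt 3+1))^{1/3}+1\big)^{1/3}\big((2(\sqrt 3-1))^{1/3}-1\big)^{-1/3}$, so that $G_{81}^3 = r^3$ and $G_{81}^9 = r^9$. With this notation the displayed value of $G_{729}^3$ becomes $G_{729}^3 = \tfrac{1}{\sqrt 2}\,r\,S$, where $S := \tfrac{3c}{c-r} - 2 = \tfrac{c+2r}{c-r}$. Substituting into the second factor gives $1 + \tfrac{\sqrt 2 G_{729}^3}{2 G_{81}^9} = 1 + \tfrac{S}{2r^8}$, and since $r^{-8} = \big((2(\sqrt3-1))^{1/3}-1\big)^{8/3}\big((2(\sqrt3+1))^{1/3}+1\big)^{-8/3}$, this is exactly the second factor of the corollary once $S$ is written out. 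Substituting into the first factor gives $\tfrac{2\sqrt 2 G_{81}^3}{G_{729}^9} = \tfrac{2\sqrt2\,r^3}{(rS/\sqrt2)^3} = \tfrac{8}{S^3} = (2/S)^3$.

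The only genuine piece of algebra, and the step I expect to be the main obstacle, is verifying that $2/S$ collapses to the clean form appearing in the corollary, namely $\tfrac{2}{S} = \tfrac{2(c-r)}{c+2r} = \tfrac{3c}{c+2r} - 1 = \tfrac{3(\sqrt 3+1)}{\sqrt3+1 + 2G_{81}} - 1$; this is what turns the appearance of $G_{729}$ in the first factor into a radical purely in $G_{81}$. Granting this, the first factor is $\big(1 + (\tfrac{3c}{c+2r}-1)^3\big)^{1/4}$, as claimed. Finally, I would multiply through by $\varphi^2(e^{-9\pi})/\varphi^2(e^{-\pi}) = \tfrac19\big(1 + (2(\sqrt3+1))^{1/3}\big)^2$, taken from Corollary~\ref{cor:9}, so that the prefactor $\tfrac13$ from Theorem~\ref{thm:a} combines with $\tfrac19$ to produce $\tfrac{1}{27}\big(1 + (2(\sqrt3+1))^{1/3}\big)^2$, completing the evaluation.
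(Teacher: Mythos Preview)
Your proposal is correct and follows essentially the same approach as the paper: invoke Theorem~\ref{thm:a} with $n=81$, insert the values of $G_{81}$ from \eqref{G81} and $G_{729}^3$ from the proof of Corollary~\ref{cor:81}, and then pass from $\varphi^2(e^{-9\pi})$ to $\varphi^2(e^{-\pi})$ via Corollary~\ref{cor:9}. The paper's own proof is terse (``similar to the proof of Corollary~\ref{cor:81}\ldots after simplification''), and your write-up simply fills in the algebraic simplifications, including the nice observation that $2/S=\tfrac{3c}{c+2r}-1$ which makes the first factor manifest.
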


\begin{proof}
Invoke Theorem~\ref{thm:a} with $n = 81$. The proof is similar to the proof of Corollary~\ref{cor:81}. After simplification, we obtain the given form.
\end{proof}

Further examples of Theorem~\ref{thm:a} can be given for $n = 11, 13, 17, 27, 37, $ and $85$.

We have calculated cubic and quintic examples that are reachable via Ramanujan's theory. The second author obtained examples for the septic case in \cite{Rebak}. As we mentioned in the introduction, these are special cases of a hypothetical theorem that Ramanujan mentioned in a note at the end of Section~12 of Chapter~20 in his second notebook \cite[p.~247]{RamanujanEarlierII}, \cite[p.~400]{BerndtIII}. A possible further direction of research could be the examination of higher-order identities and accessible examples for them. However, we must emphasize that as the order increases, we know less and less about the modular equations necessary for a possible grand theory.

\subsubsection*{Acknowledgments} Theorem~\ref{thm:e37} appeared as a conjecture in the first version of our paper on arXiv.  Subsequently, Dongxi Ye proved our conjecture.  We are grateful to him for allowing us to present a slightly modified version of his proof. We also thank the referee for helpful comments, in particular, for drawing our attention to the value of $G_{99}$, which was needed to prove Corollaries~\ref{cor:3sqrt11} and~\ref{cor:9sqrt11}.

\end{document}